\definecolor{darkgreen}{rgb}{0.0, 0.5, 0.0}
\theoremstyle{plain}                    
\newtheorem{theo}{Theorem}[section]      
\newtheorem{conj}[theo]{Conjecture}    
\newtheorem{prop}[theo]{Proposition}    
\newtheorem{cor}[theo]{Corollary}       
\newtheorem{lem}[theo]{Lemma}           
\theoremstyle{definition}              
\newtheorem{defin}[theo]{Definition}    
\newtheorem{ex}[theo]{Example}         
\theoremstyle{remark}                  
\newtheorem{remark}[theo]{Remark}          
\newcommand{\z}{\mathbb{Z}}
\newcommand {\C}{\mathbb{C}}
\newcommand {\R}{\mathbb{R}}
\newcommand{\gm}{\mathbb{G}_m}
\newcommand {\sln}{\textnormal{SL}_2(N)}
\newcommand{\zz}{\mathbb{Z}/2\mathbb{Z}}
\newcommand{\px}{(p\in X)}
\newcommand{\qz}{(q\in Z)}
\newcommand{\yd}{(Y,D)}
\newcommand{\oyd}{(\bar{Y},\bar{D})}
\newcommand{\bsig}{(B,\Sigma)}
\newcommand{\xres}{\tilde{X}}
\newcommand{\pl}{\mathbb{P}^1}
\newcommand{\Pic}{\textnormal{Pic}}
\newcommand{\Hom}{\textnormal{Hom}}
\newcommand{\Nef}{\textnormal{Nef}}
\newcommand{\NE}{\textnormal{NE}}
\newcommand{\specP}{\textnormal{Spec}\ \C [P]}
\newcommand{\aut}{\textnormal{Aut}}
\title{$\zz$-Equivariant smoothings of cusps singularities}
\author{Angelica Simonetti}
\date{}
\begin{document}
\maketitle

\begin{abstract}
Let $\px$ be the germ of a cusp singularity and let $\iota$ be an antisymplectic involution, that is an involution such that there exists a nowhere vanishing holomorphic 2-form $\Omega$ on $X\setminus \{p\}$ for which $\iota^*(\Omega)=-\Omega$. Assume also that the involution is fixed point free on $X\setminus\{p\}$. We prove that a sufficient condition for such a singularity equipped with an antisymplectic involution to be equivariantly smoothable is the existence of a Looijenga (or anticanonical) pair $\yd$ that admits an involution free on $Y\setminus D$ and that reverses the orientation of $D$. This work also contains the proof of an analogue necessary and sufficient condition for the $\zz$-equivariant smoothability of simple elliptic singularities $p\in C(E)$ with $E$ an elliptic curve of degree $d\leq8$ and even equipped with a $\zz$-action.
\end{abstract}

\tableofcontents

\section*{Introduction}
Cusp singularities are a specific type of surface singularities: more precisely, a point $p$ on a complex algebraic surface $X$ is said to be a cusp singularity if the exceptional locus $E=\pi^{-1}(p)$ of its minimal resolution $\pi:\tilde X\rightarrow X$ is either an irreducible rational nodal curve (with a single node) or a cycle of smooth rational curves meeting transversally. Every cusp singularity $\px$ has an associated dual cusp: we will refer to the exceptional locus $D$ of this dual cusp singularity as the cycle dual to $\px$. If the germ of a cusp singularity $\px$ admits an involution $\iota$ such that there exists a nowhere vanishing holomorphic 2-form $\Omega$ on $X\setminus \{p\}$ for which $\iota^*(\Omega)=-\Omega$, then we say $\iota$ is \emph{antisymplectic}. Equivalently, $\iota$ is antisymplectic if the induced involution on the minimal resolution reverses the orientation of $E$. Now let $\iota$ be an antisymplectic involution defined on $\px$ which moreover is fixed point free away from $p$ and consider the associated quotient. This gives a new singularity which is rational and log canonical (\cite{NW03}). Cusp singularities and their quotients by the action of $\zz$ are among the surface singularities which appear at the boundary of the compactification of the moduli space of surfaces of general type due to Koll\'ar, Shepherd-Barron and Alexeev (cfr. \cite{KSB88} and \cite{A94}). Since only those singularities that admit a smoothing family occur at the boundary of this moduli space, it is useful to find nice conditions under which they happen to be smoothable. This question has been answered completely when it comes to cusp singularities. Indeed in 1981 Loijenga proposed the following conjecture, also proving the necessity of the condition.
\begin{theo}[\cite{L81},\cite{GHK15a}]\label{looij}
A cusp singularity $\px$ is smoothable if and only if the dual cycle $D$ sits as an anticanonical divisor on a smooth rational surface.
\end{theo}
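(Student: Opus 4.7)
The plan is to establish the two directions by substantially different techniques: a semistable-degeneration / Hodge-theoretic argument for necessity, and an explicit construction of a one-parameter deformation via mirror symmetry for sufficiency.

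For the necessity direction, I would follow Looijenga's original strategy. Given a smoothing $\pi:\mathcal{X}\to\Delta$ of $\px$, after finite base change and resolution I can assume the family $\tilde{\mathcal{X}}\to\Delta'$ is semistable, so its central fiber $\tilde{\mathcal{X}}_0$ is a simple normal crossing surface containing the exceptional cycle $E$ of the minimal resolution $\xres\to X$. Since the general fiber is smooth and locally around the vanishing cycles supports a nowhere vanishing holomorphic $2$-form, an adjunction argument forces the log canonical class of $\tilde{\mathcal{X}}_0$ to be trivial along the relevant components. Combined with the combinatorial constraints imposed by the link of the cusp being a $T^2$-bundle over $S^1$, an analysis of the dual complex of $\tilde{\mathcal{X}}_0$ singles out a smooth rational component $\bar Y$ meeting the rest of the central fiber along a cycle of rational curves $\bar D$, and adjunction shows that $\bar D$ is anticanonical on $\bar Y$. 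Blowing down $(-1)$-curves disjoint from $\bar D$ yields the anticanonical pair $\yd$ with $D$ the dual cycle of $\px$.

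For sufficiency, the approach of Gross-Hacking-Keel is to construct the deformation directly as a mirror family. Starting with a Looijenga pair $\yd$ whose boundary $D$ is the dual cycle of $\px$, I would form the tropicalization $(B,\Sigma)$, an integral affine manifold with a single affine singularity at the origin whose structure encodes $D$. Following GHK, one builds a canonical scattering diagram $\mathfrak{D}\subset B$ and, from it, theta functions $\vartheta_q$ indexed by the integral points $q\in B(\z)$. These span a flat $\C[\NE(Y)]$-algebra $A$ with multiplication determined by broken-line counts, and $\mathrm{Spec}\,A\to \mathrm{Spec}\,\C[\NE(Y)]$ is the desired mirror family: its fiber over the distinguished $0$-dimensional stratum is the cusp germ $\px$ dual to $\yd$, while restricting to a generic one-parameter slice inside $\mathrm{Spec}\,\C[\NE(Y)]$ produces an honest smoothing.

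The main obstacle is the sufficiency direction, and within it, the consistency of the scattering diagram $\mathfrak{D}$: one must show that $\vartheta_q\cdot \vartheta_{q'}$, computed by broken-line sums, is well-defined and agrees with the expected structure constants, so that $A$ is a bona fide $\C[\NE(Y)]$-algebra and $\mathrm{Spec}\,A$ is flat. This is proven via a Kontsevich-Soibelman style induction over the order in the maximal ideal of $\C[\NE(Y)]$, with the essential input being the identification of the wall-crossing automorphisms with generating series of log Gromov-Witten invariants on $\yd$. It is this combinatorial and enumerative heart of the argument, rather than anything specifically about cusps, that kept the conjecture open from Looijenga's 1981 paper until the work of Gross-Hacking-Keel.
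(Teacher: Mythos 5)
Your proposal is correct and matches the approach the paper relies on: the paper does not prove Theorem \ref{looij} itself but imports it by citation, attributing necessity to Looijenga's semistable-degeneration argument in \cite{L81} and sufficiency to the Gross--Hacking--Keel mirror family built from scattering diagrams and theta functions in \cite{GHK15a}, which is exactly the two-pronged strategy you outline. Your identification of the consistency of the scattering diagram (via log Gromov--Witten wall-crossing) as the essential difficulty in the sufficiency direction is also the correct assessment of where the hard work lies.
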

The proof of the sufficiency of this conjecture came later in a broader paper on the mirror symmetry of log Calabi-Yau surfaces by Gross, Hacking and Keel \cite{GHK15a}. This result is interesting because it connects the deformation theory of this type of singularities to the existence of certain surfaces, the Looijenga pairs which can be checked algorithmically (see for example \cite{F15} and \cite{FM83}).

Inspired by these results, the main goal of this paper is to address the same problem for quotient cusp singularities, or, to be more precise, to investigate under which conditions a smoothable cusp singularity is equivariantly smoothable with respect to the action of $\zz$. Indeed, not all the smoothings of the quotient cusp singularity $(q\in Z)$ come from $\zz$-equivariant ones of the associated cusp singularity. However, the quotient cusps appearing at the boundary of the KSBA compactification described above are only those admitting a $\mathbb{Q}$-Gorenstein smoothing, that is one determined by an equivariant smoothing of the cusp. Thus we will focus our attention on this type of smoothing families.

We have been able to give a sufficient condition for cusp singularities to be $\zz$-equivariantly smoothable, which can be stated as follows:

\begin{theo}\label{suff}
Let $\px$ be the germ of a cusp singularity equipped with an antisymplectic involution $\iota$ which is fixed point free away from $p$ and let $D$ be its dual cycle. If there exists a Looijenga pair $\yd$ endowed with an antisymplectic involution that extends the one induced on $D$ by $\iota$ and is fixed point free away from $D$, then the cusp singularity $\px$ is equivariantly smoothable.
\end{theo}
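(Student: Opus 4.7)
The plan is to run the Gross--Hacking--Keel proof of the sufficiency direction of Theorem~\ref{looij} equivariantly. Recall their construction: from the Looijenga pair $\yd$ they build a formal family $\mathcal{X}\to \mathrm{Spf}\,\hat{R}$, where $\hat{R}$ is the completion of the monoid algebra $\C[\NE(Y)]$ at the irrelevant ideal; the central fiber is the $n$-vertex, i.e.\ the cusp singularity dual to $D$. The total space is assembled from a canonical theta-function basis whose multiplication rules are dictated by the consistent scattering diagram on the tropicalization of $\yd$, and algebraization of an appropriate one-parameter restriction then produces the desired smoothing $\mathcal{X}\to \Delta$ of $\px$. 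My strategy is to perform each step of this construction in a $\sigma$-equivariant manner, where $\sigma$ denotes the involution on $\yd$ provided by the hypothesis.

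The first step is to show that $\sigma$ lifts through each piece of this machinery. Since $\sigma$ is an automorphism of $Y$ it preserves $\NE(Y)$ and induces involutions on $\C[\NE(Y)]$ and on $\hat{R}$. Because it reverses the orientation of $D$, the induced action on the tropical boundary is an orientation-reversing symmetry of the underlying fan which permutes the walls and rays of the scattering diagram compatibly with the consistency relations. The canonical nature of the broken-line description of the theta functions then forces the multiplication rule defining $\mathcal{X}$ to be equivariant, so $\sigma$ lifts to an involution of $\mathcal{X}$ covering its action on the base.

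Second, I would pick the one-parameter smoothing direction to be $\sigma$-fixed. The subcone of $\NE(Y)$ whose interior parameterizes GHK smoothings of $\px$ is open, convex, and $\sigma$-stable, so averaging any interior point with its $\sigma$-image produces a fixed interior direction; restricting $\mathcal{X}$ along the corresponding cocharacter and algebraizing yields an honest smoothing carrying a $\zz$-action covering an involution of $\Delta$. One identifies the induced involution on the central fiber with $\iota$ by noting that both are antisymplectic involutions of $\px$ inducing the prescribed orientation-reversing action on the dual cycle $D$, which is enough to determine the involution on the cusp germ. A local analysis of the GHK charts should then translate fixed-point-freeness of $\sigma$ on $Y\setminus D$ into fixed-point-freeness of the lifted involution on $\mathcal{X}\setminus\{p\}$, completing the proof that the smoothing is $\zz$-equivariant in the sense required.

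The principal obstacle is making the equivariance of the GHK construction fully rigorous: while functoriality of the mirror family under automorphisms of $\yd$ is morally clear, one must revisit \cite{GHK15a} in detail to verify that the scattering diagram, the theta functions and the algebraization step all respect $\sigma$. A secondary difficulty is propagating the freeness of $\sigma$ away from $D$ into freeness of the induced involution on the total space of the smoothing, since one needs to rule out fixed loci appearing in the formal (and then analytic) neighbourhood built around the cusp.
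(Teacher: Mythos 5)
Your overall strategy---running the Gross--Hacking--Keel construction equivariantly, choosing a $\sigma$-invariant smoothing direction by averaging, and then identifying the induced involution on the nearby cusps with $\iota$---is the same strategy the paper follows. But there is a genuine gap at the very first step, and it is the one place where the equivariance is \emph{not} automatic. The naive lift of $j^*$ to the base $\specP$, namely $z^\beta\mapsto z^{j^*(\beta)}$, together with the naive permutation of the GHK charts $U_{\rho_i,J}$, does define an involution of the family; the problem is that it is the wrong one. On the two charts indexed by the rays fixed by the reflection ($i=n/2,n$), the naive map $(x_{i-1},x_{i+1},x_i)\mapsto(x_{i+1},x_{i-1},x_i)$ fixes a whole curve in each fiber, so the induced involution on the cusps $s(t)\in X_{J,t}$ is not free away from the singular point and therefore cannot be $\iota$. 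This is exactly the phenomenon already visible for the cusp itself in Proposition~\ref{cuspact}: the linear involution $B$ alone has fixed points, and one must compose with translation by a carefully chosen $2$-torsion element $t$. The paper's proof implements the corresponding correction on the family by twisting the base involution with the sign $(-1)^{\beta\cdot(D_n+D_{n/2})}$ (equation~(\ref{baseinv})) and inserting signs $(-1)^{\varepsilon(i)}$ into the chart maps, and then verifies that these signs are mutually consistent with the defining equations $x_{i-1}x_{i+1}=z^{[D_i]}x_i^{-D_i^2}f_{\rho_i}$ and with the scattering functions. Your appeal to ``the canonical nature of the broken-line description'' produces only the untwisted lift and so does not prove the theorem; moreover your identification of the central-fiber involution with $\iota$ implicitly invokes the uniqueness statement of Proposition~\ref{uniqueinv}, which only applies to involutions that are already known to be free away from the cusp point---so that step is circular without the twist.

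A second, smaller consequence of the missing twist is that the fixed locus of the corrected base involution is not the invariant subtorus but a translate of it by a point of the form $(\pm1,\dots,\pm1)$; one must then check separately that this translated locus still meets the boundary stratum parameterizing cusp degenerations (the paper shows $N'_{\R}$ meets the interior of the face $\tau$) and that the genericity condition $z_{i,j}\neq z_{k,j}$ on the Gross--Siebert locus is compatible with the equations cutting out the fixed locus (here the sign in $z^{[E_{i,j}]}=-z^{[E'_{i,\sigma(j)}]}$ for $j=n/2,n$ is what saves the day). Your averaging argument addresses the existence of an invariant direction in the cone but not these two points, which depend on the signs you have omitted.
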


Here an involution $j:\yd\rightarrow\yd$ is said to be antisymplectic it reverses the orientation of $D$. The proof of this result is based on the work of Gross, Hacking and Keel (\cite{GHK15a}): we use the involution defined on the surface $Y$ to get an equivariant version of the GHK construction. From this family we then obtained the required equivariant smoothing for the cusp singularity. Theorem \ref{suff} is already very useful. Indeed it allows to prove the following interesting fact.
\begin{cor}\label{nless10}
All cusp singularities of multiplicity $n\leq 10$ admitting an antisymplectic involution which is fixed point free away from $p$ are equivariantly smoothable.
\end{cor}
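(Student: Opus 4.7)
The strategy is to reduce the statement to Theorem \ref{suff}. So for every cusp $(p\in X)$ of multiplicity $n\leq 10$ equipped with an antisymplectic involution $\iota$ which is fixed point free away from $p$, I need to produce a Looijenga pair $(Y,D)$, with $D$ the dual cycle of $(p\in X)$, carrying an antisymplectic involution that is free on $Y\setminus D$ and extends the involution $\iota_D$ that $\iota$ induces on $D$.

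The first step is to translate the hypotheses into combinatorial data. The involution $\iota$ lifts to the minimal resolution $\tilde X\to X$ and acts on the exceptional cycle $E=\pi^{-1}(p)$ as a dihedral reflection that preserves the cyclic list of self-intersections. By the explicit duality between $E$ and $D$, the same reflection together with the self-intersections of $E$ determine the length $r$ of $D$, its sequence of self-intersections $-b_1,\dots,-b_r$, and the orientation-reversing involution $\iota_D$. Since the multiplicity $n$ controls both $r$ and the size of the $b_i$, the bound $n\leq 10$ yields only finitely many combinatorial types $(D,\iota_D)$ to be realised.

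Next, for each combinatorial type I would exhibit an explicit Looijenga pair $(Y,D)$ with the required equivariance. The natural starting point is a toric Looijenga pair $(\bar Y,\bar D)$, typically obtained from $\pp$, $\pl\times\pl$ or a Hirzebruch surface. The torus of $\bar Y$ carries the inverse involution $t\mapsto t^{-1}$; this extends to $\bar Y$, fixes each toric boundary component as a set while reversing the orientation of $\bar D$, and has only isolated fixed points in the open torus. One then performs equivariant blow-ups of $\bar D$ to adjust the self-intersection sequence to $-b_1,\dots,-b_r$, blowing up at toric fixed points of $\bar D$ that are fixed by $\iota_{\bar Y}$ and at non-toric points only in pairs swapped by the involution. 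Because $n\leq 10$, the number of blow-ups required in every case is small enough that an admissible equivariant choice can be exhibited by hand.

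The main obstacle is verifying that the resulting involution on $(Y,D)$ is \emph{free} on $Y\setminus D$. The inverse involution on $\bar Y$ fixes the finitely many $2$-torsion points of the open torus, and the equivariant blow-ups constructing $(Y,D)$ must be arranged so that every such fixed point is either transported onto $D$ or removed by being blown up inside a pair of conjugate non-fixed centres. This is the step where the case-by-case analysis is needed: once it is carried out for every admissible combinatorial type $(D,\iota_D)$ with $n\leq 10$, Theorem \ref{suff} immediately delivers the required $\zz$-equivariant smoothing of $(p\in X)$.
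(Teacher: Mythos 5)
Your overall strategy --- reduce to Theorem \ref{suff} by building, for each symmetric dual cycle $D$, an equivariant Looijenga pair $(Y,D)$ from a toric model via equivariant blowups --- is exactly the paper's (Proposition \ref{n10} feeding into Proposition \ref{n10def}). However, the specific involution you start from is wrong, and this breaks the argument. The inversion $t\mapsto t^{-1}$ on the torus is induced by $-\mathrm{Id}$ on the cocharacter lattice; since $\det(-\mathrm{Id})=+1$ it acts on the dual graph of the toric boundary as a rotation, not a reflection, and it preserves the holomorphic $2$-form $\frac{dx}{x}\wedge\frac{dy}{y}$. Hence it is symplectic, not antisymplectic: it does not reverse the orientation of $\bar D$, so it cannot extend the reflection that $\iota$ induces on the dual cycle (cf.\ Remark \ref{inv} and Proposition \ref{sym}). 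The paper instead uses the reflection $\begin{pmatrix}-1&0\\0&1\end{pmatrix}$ composed with translation by a $2$-torsion element, i.e.\ $j_0:(z,w)\mapsto(1/z,-w)$ on $(\pl\times\pl,\Delta)$ (Remark \ref{baseact}); the translation is precisely what moves the four fixed points out of the open torus and onto $\Delta$.

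This also invalidates your proposed fix for freeness. An isolated fixed point of an involution on a surface cannot be ``removed'' by blowing up: the lifted involution acts on the exceptional curve and still has fixed points there; nor can a fixed point be ``blown up inside a pair of conjugate non-fixed centres'', since a fixed point is never one of two distinct conjugate points; and interior blowups are centred on $D$ in any case, so they never touch the $2$-torsion points of the open torus. The correct mechanism is to kill the interior fixed locus already at the toric level by the translation, and then to perform all corner and interior blowups at pairs $\{p, j(p)\}$ of distinct points avoiding the four fixed points on the boundary, so that the lift (Remark \ref{resact}) remains free on $Y\setminus D$; the finite check is organized by the list of minimal symmetric toric models in Figure \ref{tp}. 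Finally, your reduction silently assumes the Looijenga-pair machinery applies to every multiplicity $\leq 10$, but Theorem \ref{desact} and Proposition \ref{n10} require the dual cycle to have length at least $4$; the multiplicity-$2$ cusps (dual cycle of length $\leq 2$) must be treated separately, which the paper does by writing the cusp as an explicit hypersurface $(z^2+x^p+y^q-\tfrac{1}{4}x^2y^2=0)$ with the involution $(x,y,z)\mapsto(-x,-y,-z)$ and an explicit equivariant one-parameter smoothing.
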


In order to prove these results, a great importance had the study of the other main character of theorem \ref{suff}, that is Looijenga pairs. A Looijenga pair is a smooth projective surface $Y$, together with an anticanonical divisor $D$ which is either an irreducible rational curve with a single node or a cycle of smooth rational curves. Examples of Looijenga pairs are provided for instance by the toric surfaces $\mathbb P^2$ and $\pl\times\pl$ with their respective toric boundaries (or, more generally, smooth toric surfaces with their toric boundaries). 

We studied Looijenga pairs equipped with an \emph{antisymplectic involution} fixed point free on $Y\setminus D$, proving the following result
\begin{theo}\label{equivblowup} 
Let $(\pl\times\pl, \Delta)$ be the toric Looijenga pair given by $\pl\times\pl$ together with its toric boundary $\Delta$. A negative definite Looijenga pair $\yd$ with $n\geq 4$ can be equipped with an antisymplectic involution $j$ that is fixed point free on $Y\setminus D$, if and only if there exists a sequence of contractions of disjoint pairs of $(-1)$ curves
\begin{equation}\label{contmaps}
\yd\overset{\psi_1}{\longrightarrow}(Y_1,D_1)\overset{\psi_2}{\longrightarrow}\dots\overset{\psi_{m-1}}{\longrightarrow}(Y_{m-1},D_{m-1})\overset{\psi_m}{\longrightarrow}(\pl\times\pl, \Delta)
\end{equation}
that respects the $\zz$-action defined on $\yd$ and induces on $(\pl\times\pl,\Delta)$ the action given by the map $j: (z,w)\mapsto (1/z,-w)$
\end{theo}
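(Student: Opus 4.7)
The plan is to prove each direction in turn, with the \emph{if} part being a direct construction and the \emph{only if} part an inductive reduction to a minimal toric model equipped with an involution.

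For the \emph{if} direction, I would first check that the model involution $j_0 : (z,w)\mapsto(1/z, -w)$ on $(\pl\times\pl, \Delta)$ is itself antisymplectic and free off $\Delta$: $j_0$ swaps the components $\{0\}\times\pl$ and $\{\infty\}\times\pl$ and restricts to $z\mapsto 1/z$ on each of $\pl\times\{0\}$ and $\pl\times\{\infty\}$, reversing the cyclic order of $\Delta$; its fixed locus consists of the four points $(\pm 1, 0)$ and $(\pm 1, \infty)$, all on $\Delta$. Given a sequence as in \eqref{contmaps}, one then recovers $\yd$ from $(\pl\times\pl, \Delta)$ by reversing the contractions: each step is an equivariant blowup of a $\zz$-orbit of two distinct points supported on the boundary, so the pulled-back involution at every stage still reverses the cyclic order of $D$ and remains free on the complement.

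For the \emph{only if} direction, I would implement a $\zz$-equivariant version of the standard algorithm, used in \cite{GHK15a}, that contracts interior $(-1)$-curves on a Looijenga pair to reach a toric model. The central inductive step is: provided $\yd$ is not yet toric minimal, find a pair of disjoint interior $(-1)$-curves $\{E, j(E)\}$ with $E\cdot D\leq 1$, and contract them equivariantly. Two observations drive this step. First, no interior $(-1)$-curve $E$ can be $j$-invariant, because the restriction of $j$ to $E\cong\pl$ would fix at least two points, which would have to lie on $D$ by freeness off $D$, contradicting $E\cdot D\leq 1$. Second, $E$ can be arranged so that $E\cap j(E) = \emptyset$: the intersection is a $j$-stable finite set, any $j$-fixed point in it must lie on $D$, and an analysis of the boundary combinatorics either yields a contradiction or permits one to swap $E$ for another interior $(-1)$-curve. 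Contracting $\{E, j(E)\}$ equivariantly then produces a Looijenga pair $(Y_1, D_1)$ of smaller Picard rank carrying an inherited antisymplectic involution free off $D_1$; iterating terminates at a minimal toric Looijenga pair $\oyd$ endowed with such an involution.

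The crux is then to identify $\oyd$ with $(\pl\times\pl, \Delta)$ and the induced involution with $j_0$. Minimal toric Looijenga pairs are $(\pp, \text{triangle})$ and the Hirzebruch pairs $(\mathbb{F}_k, \text{toric boundary})$. The hypothesis $n\geq 4$, together with the fact that contracting interior $(-1)$-curves preserves the number of components of the boundary cycle, excludes $(\pp, \text{triangle})$. For $k\geq 1$ the unique $(-k)$-section of $\mathbb{F}_k$ must be $j$-invariant, and any involution on it either acts trivially or has two fixed points which, by the node combinatorics of the boundary cycle, cannot both lie on $\od$. This forces $k=0$ and $\bar Y = \pl\times\pl$. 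A final case analysis on the orientation-reversing order-two automorphisms of $(\pl\times\pl, \Delta)$ that act freely off $\Delta$ shows that, up to conjugation by a toric automorphism, the only such involution is $j_0$. The main obstacle is the inductive step — ensuring that one can always choose $E$ so that $E$ and $j(E)$ are disjoint and their simultaneous contraction preserves both the Looijenga pair structure and the equivariant freeness — together with this final classification of the involution on the minimal model.
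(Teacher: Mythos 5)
Your ``if'' direction is fine and matches the paper's remark \ref{resact}. For the ``only if'' direction you take a genuinely different route --- an equivariant contraction algorithm carried out directly on $Y$ --- whereas the paper passes to the quotient $(Z,F)$, which has four $A_1$ singularities, runs the minimal model program for surfaces with $A_1$ singularities there (theorem \ref{mmp}), identifies the resulting minimal model with the quotient of $(\pl\times\pl,\Delta)$ by $j_0$ (lemma \ref{pllem}), and only then lifts each contraction to a disjoint pair of $(-1)$-curve contractions upstairs. The quotient route is chosen precisely because it makes your two hardest steps automatic, and it is at those two steps that your argument has real gaps. First, the existence of an interior $(-1)$-curve $E$ with $E\cap j(E)=\emptyset$ is the crux of your induction and you do not prove it: $E\cdot j(E)$ can be positive with the intersection points interchanged by $j$ in pairs, so the observation that any $j$-fixed point of $E\cap j(E)$ lies on $D$ yields no contradiction, and after contracting $E$ the image of $j(E)$ has self-intersection $-1+(E\cdot j(E))^2\geq 0$, so the pair cannot be contracted in sequence; the sentence ``an analysis of the boundary combinatorics either yields a contradiction or permits one to swap $E$ for another interior $(-1)$-curve'' is exactly the missing argument. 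On the quotient this is free: any $(-1)$-curve on $Z$ avoiding the four singular points pulls back to two disjoint $(-1)$-curves.

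Second, your identification of the terminal pair does not work as stated. Interior contractions preserve the length $n$ of the boundary cycle, so for $n>4$ your induction can never terminate at $\pp$ or at a Hirzebruch pair; you must interleave equivariant contractions of boundary $(-1)$-curves, which you do not discuss. More seriously, your exclusion of $(\mathbb F_k,\od)$ for $k\geq 1$ fails: the $(-k)$-section \emph{is} a component of $\od$, so the two fixed points of $j$ on it lie on $\od$ and freeness off $\od$ is not violated. Indeed $(\mathbb F_2,\partial\mathbb F_2)$ does carry an antisymplectic involution free off the boundary --- lift $(z_0:z_1)\mapsto(z_1:z_0)$ to $\mathbb P(\mathcal O\oplus\mathcal O(2))$ using the linearization on $\mathcal O(2)$ twisted by $-1$; its four fixed points lie on the two sections and the two torus-invariant fibres are exchanged. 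The parity constraint on the self-intersections of the fixed components only rules out odd $k$. So terminating at $(\mathbb F_{2k},\od)$ is a genuine possibility, and to finish you must show that the contractions can be rechosen (by equivariant elementary transformations) so as to land on $(\pl\times\pl,\Delta)$; this is the point the paper handles inside theorem \ref{mmp} by ``always contracting on the most negative section.'' Until these two steps are supplied, both the inductive step and the endgame of your proof are incomplete.
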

Moreover, if the length $n$ of $D$ is such that  $4\leq n \leq 10$ and $D$ is the dual cycle to a symmetric cusp singularity which admits an antisymplectic involution fixed point free on $X\setminus \{p\}$, then there always exists a smooth projective surface $Y$ containing $D$ as an anticanonical divisor and a $\zz$-action defined on it. This, together with theorem \ref{suff}, implies the result stated in corollary \ref{nless10}. 

Finally we would like to observe that theorem \ref{suff} is in fact part (the sufficient condition) of a more comprehensive conjecture, modeled on Looijenga's theorem for which we would like to find a complete proof in the coming years.
\begin{conj}\label{mainc}
Let $\px$ be a cusp singularity equipped with an antisymplectic involution $\iota$. Then $p\in X$ admits an equivariant smoothing if and only if the dual cycle $D$ sits as an anticanonical divisor on a smooth rational surface which admits an antisymplectic involution extending the one induced on $D$ by $\iota$ on $\px$.
\end{conj}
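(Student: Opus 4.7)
The sufficient direction in Conjecture~\ref{mainc} is already contained in Theorem~\ref{suff}, so the remaining task is the necessary one: given an equivariant smoothing of $\px$, one has to produce a Looijenga pair $\yd$ carrying an antisymplectic involution that extends the one induced by $\iota$ on $D$. The plan is to equivariantize the proof of the necessary direction of Looijenga's Theorem~\ref{looij}. In its sharpened Gross--Hacking--Keel form, that argument constructs, from a smoothing family $\pi:\mathcal{X}\to T$ of $\px$, a Looijenga pair $\yd$ as a partial compactification of an affine model built out of theta functions on the mirror family. The key observation to exploit is that this construction is natural: an automorphism of the smoothing family $\pi$ automatically induces an automorphism of the mirror Looijenga pair, and the equivariant nature of the smoothing should transport $\iota$ to the desired involution on $\yd$.

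The first step is to make precise the functoriality of the GHK mirror construction for equivariant smoothings. Concretely, I would start with a $\zz$-equivariant one-parameter smoothing $\pi:(\mathcal{X},\iota_{\mathcal{X}})\to(T,0)$ of $\px$, whose existence is the hypothesis. The involution $\iota_{\mathcal{X}}$ then acts on every piece of data used by GHK to reconstruct $\yd$, in particular on the canonical scattering diagram and on the ring of theta functions. Because $\iota$ is antisymplectic, its action on the relative dualising sheaf of $\pi$ is by $-1$, and this sign is precisely what is needed for the induced action on the mirror to exchange the two orientations of the cycle $D$, that is, to be antisymplectic on $\yd$ in the sense used in this paper. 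Restricted to $D$, the resulting involution should then coincide with the one induced by $\iota$ on the exceptional cycle of the dual cusp.

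A separate step is to check that the involution on $\yd$ so produced is fixed-point free on $Y\setminus D$. This should follow from the fact that $Y\setminus D$ plays the role of a generic Milnor fibre of the GHK degeneration, so that a fixed point in $Y\setminus D$ would correspond to a fixed point of $\iota$ on $X\setminus\{p\}$, contradicting the standing hypothesis on $\iota$. Combined with Theorem~\ref{suff}, these two steps would complete the equivalence claimed in Conjecture~\ref{mainc}.

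The main technical obstacle is the passage, in the first step above, from a formal or cohomological involution of the mirror construction to a biregular involution of the projective surface $Y$. The set of Looijenga pairs with prescribed boundary $D$ sweeps out a positive-dimensional period domain, and only a proper sublocus will carry a geometric antisymplectic involution compatible with $\iota|_{D}$. Showing that the pair selected by the equivariant smoothing lies in this sublocus will likely require a Torelli-type theorem for anticanonical pairs, together with a careful analysis of the action of $\iota_{\mathcal{X}}$ on the Mori fan of the mirror family; I expect this refinement of the GHK period map to be the technical heart of the conjecture.
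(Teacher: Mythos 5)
The statement you are addressing is stated in the paper as a \emph{conjecture}, and the paper itself establishes only one implication: the sufficiency, i.e.\ Theorem \ref{suff}, proved in subsection \ref{maincproof} by equivariantizing the Gross--Hacking--Keel smoothing construction. The author explicitly writes that no full proof of the converse is known; indeed Proposition \ref{n12smooth} and Table \ref{n12cusps} exhibit smoothable symmetric cusps of multiplicity $12$ for which no compatible Looijenga pair exists, and the necessity direction is exactly what would be needed to conclude that these are not equivariantly smoothable --- the paper leaves this as a further conjecture. Your proposal correctly delegates the sufficiency to Theorem \ref{suff}, but the necessity direction you outline is a programme rather than a proof: every step is phrased with ``should'' or ``would'', and you yourself isolate the unresolved core, namely promoting a lattice-level or formal involution of the mirror data to a biregular involution of $Y$, which amounts to showing that the pair selected by the equivariant smoothing lies in the locus $S$ of Proposition \ref{modulispaceinv}. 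That step is not carried out, so the gap the paper leaves open remains open in your write-up.

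There is also a directional confusion in your first step. The GHK construction builds the smoothing of the cusp \emph{from} a Looijenga pair $\yd$, via the scattering diagram and theta functions attached to $\yd$; it does not reconstruct $\yd$ from an abstract smoothing family of $\px$. The necessity in Theorem \ref{looij} is due to Looijenga's original argument, which starts from a smoothing, works with (a compactification of) the Milnor fibre, and produces the rational surface by a topological and deformation-theoretic analysis, not by mirror symmetry. So ``equivariantizing the GHK mirror construction'' is not an available route to the necessity; one would instead have to equivariantize Looijenga's argument, track the induced involution on the resulting anticanonical pair, and then verify --- presumably via Theorem \ref{torelliinv} and Proposition \ref{modulispaceinv} --- that this involution is realized geometrically, is antisymplectic, restricts on $D$ to the reflection induced by $\iota$, and is free on $Y\setminus D$. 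Your final paragraph correctly identifies these as the technical heart of the problem, but identifying an obstacle is not the same as overcoming it.
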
 

Using theorem \ref{equivblowup}, one can see that among cusps of multiplicity equal to twelve there can be found examples of cusp singularities that are equipped with an antisymplectic involution and are smoothable, but for which there does not exist a Looijenga pair $\yd$ that admits an antisymplectic involution (fixed point free away from D) extending the action defined on $D$. Then conjecture \ref{mainc} would allow us to conclude that they are not equivariantly smoothable. The phenomenon described by the conjecture is well illustrated by what happens for simple elliptic singularities equipped with an action of $\zz$. It is well known that the deformation space of cones over elliptic curves $E$ of degree eight has two (essentially) different smoothing components: one of them has associated Milnor fibre  given by $M_{i}=\pl\times\pl\setminus E$ while the other one is associated to $M_{ii}=\mathbb F_1\setminus E$. Moreover their mirror surfaces are respectively $U_{i}=Y_{i}\setminus D$ and $U_{ii}=Y_{ii}\setminus D$, where $(Y_{i},D)$ and $(Y_{ii},D)$ are two distinct negative semidefinite Looijenga pairs and $D$ is a cycle of eight smooth rational curves of self intersection $-2$. In subsection \ref{ellsect} we have been able to prove that on the one hand while $(Y_{i},D)$ admits a $\zz$-action which is free away from $D$, it is not possible to construct such an action on $(Y_{ii},D)$. On the other hand, the Milnor fibre of any $\zz$-equivariant smoothing of a simple elliptic singularity of degree 8 is isomorphic to $M_i$.

This paper is structured as follows. The first section contains the results on cusp singularities admitting an antisymplectic involution, the second section deals with Looijenga pairs and contains the proof of theorem \ref{equivblowup}, among the others. The final section contains the main theorem, \ref{suff}, and the results on equivariant smoothability of cusps admitting an antisymplectic involution of multiplicity $n\leq 12$ and of simple elliptic singularities.

\subsection{Acknowledgments}
This paper is the result of part of my work as a Ph.D. student at the University of Massachusetts in Amherst and therefore it owes a great deal to the many interactions with the people - professors and collegues - I had the pleasure to meet there. I am very grateful to my advisor Paul Hacking for suggesting me the problem of equivariant smoothability of cusps and for the countless enligthing conversations, from which I have learnt a lot. I am also grateful to professors Jenia Tevelev and Eyal Markman and to Jennifer Li for the useful discussions and suggestions on the problem.

\section{Cusp singularities}
\subsection{Definitions and general results}\label{intro}
Let $\px$ be the germ of an isolated normal surface singularity. Let $\pi:\xres\longrightarrow X$ be its minimal resolution and $E=\pi^{-1}(p)$ the exceptional locus. We summarize some well known facts about this type of singularities, see for instance Looijenga \cite{L81} and Friedman \cite{FM83}.

\begin{defin}
We say that $\px$ is a \emph{cusp singularity} if the exceptional locus $E$ is a union of smooth rational curves meeting transversally, $E=\bigcup_{i=1}^{n} E_i$, with dual graph a cycle and $n\geq 2$ or a rational curve with one node.
\end{defin}
Note that the negative definiteness of the intersection matrix for $E$ and the fact that $\pi$ is a minimal resolution, imply the following three conditions:
\begin{itemize}
\item[i.] Each self intersection $-e_i={E_i}^{2}$ is such that $e_i\geq 2$
\item[ii.] There exists at least one $j$ such that $e_j\geq 3$
\item[iii.] If $E$ only has one irreducible component, then $-E^2\geq 1$
\end{itemize}
Moreover, the cycle of integers $(e_1,\dots,e_n)$ determines the analytic type of the cusp singularity; in other words cusp singularities are taut (\cite{La73}).\\
The \textit{multiplicity} of $\px$ is equal to 2 if $E^2=-1$, otherwise it is equal to $-E^2$; $n$ is called the \textit{length} of the cycle. Following Friedman, we will occasionally abuse notation and use $E$ to indicate the cycle of curves, the cycle of integers and the cusp singularity itself.\par\medskip

\begin{remark}\label{dualformula}Every cusp singularity comes with an associated \textit{dual cusp}: one way to describe it is in terms of its cycle of integers. If the cusp $\px$ is given by the cycle
$$(a_1,\underbrace{2,\dots,2}_{b_1},a_2,\underbrace{2,\dots,2}_{b_2},\dots,a_l,\underbrace{2,\dots,2}_{b_l})$$
then the dual cusp $D$ is obtained as:
$$(b_1+3,\underbrace{2,\dots,2}_{a_2-3},b_2+3,\underbrace{2,\dots,2}_{a_3-3},\dots,b_l+3,\underbrace{2,\dots,2}_{a_1-3})$$
unless the length of the cycle is 1 or $E^2=-1$. In these cases we have:
\begin{itemize}
\item If $E=(1)$, then $D=(1)$
\item If $E=(e)$ with $e\geq 2$ then $D=(3,\underbrace{2,\dots,2}_{e-1})$
\item If $E=(3,\underbrace{2,\dots,2}_{e})$ with $e\geq 1$ then $D=(e+1)$
\end{itemize}
\end{remark}

The duality of cusp singularities $D,E$ can be described from various points of view, some of which will appear later in this section. To give an idea of how $E$ and its dual $D$ are related to each other we include the following result.
\begin{prop}[Lemma 1.4, \cite{FM83}]
Let $E$ represent a cusp singularity and $D$ represent its dual, then
\begin{itemize}
\item[i.] the dual to $D$ is $E$
\item[ii.] the length of $D$ is equal to $-E^2$
\end{itemize}
\end{prop}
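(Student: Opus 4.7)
Both assertions are purely combinatorial and follow directly from the explicit duality formula in Remark \ref{dualformula}. My plan is to carry out the generic calculation in one go, and to dispose of the three exceptional families (in which $E$ or $D$ is a rational nodal curve) by a short case check. No further geometric input is needed.

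In the generic case I would write $E = (a_1, 2^{b_1}, a_2, 2^{b_2}, \dots, a_l, 2^{b_l})$ with $a_j \geq 3$, $b_j \geq 0$, and indices taken cyclically, so that the length is $n = l + \sum_j b_j \geq 2$. Remark \ref{dualformula} gives $D = (b_1+3, 2^{a_2-3}, b_2+3, 2^{a_3-3}, \dots, b_l+3, 2^{a_1-3})$, itself in standard form after setting $a'_j := b_j+3 \geq 3$ and $b'_j := a_{j+1}-3 \geq 0$, and with length $l + \sum_j (a_j - 3) = \sum_j a_j - 2l$. Feeding $D$ back into the same formula, its ``big'' entries become $b'_j + 3 = a_{j+1}$ while the runs of $2$'s between them have length $a'_{j+1} - 3 = b_{j+1}$, which reproduces $E$ up to a cyclic shift and gives (i). For (ii), the standard intersection computation on a cycle of length $n \geq 2$ (where for $n=2$ the two components meet in two points, contributing the same $2n = 4$ as in the $n \geq 3$ case) yields $-E^2 = \sum_j a_j + 2\sum_j b_j - 2n = \sum_j a_j - 2l$, matching the length of $D$ computed above.

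It remains to treat the three exceptional families $E=(1)$, $E=(e)$ with $e \geq 2$, and $E=(3,2^e)$ with $e \geq 1$. Here the notational convention switches: a length-$1$ symbol $(e)$ records $-E^2 = e$ directly (the cusp being a rational nodal curve) rather than the self-intersection of a smooth component, so a blind application of the generic formula is invalid and one must appeal to the dedicated special rules of Remark \ref{dualformula}. In each family both (i) and (ii) are one-line verifications: for instance, $E = (3,2^e)$ has $-E^2 = 1$, its dual is $D = (e+1)$ of length $1$, and the dual of $(e+1)$ is $(3, 2^{(e+1)-1}) = (3,2^e) = E$ by the second exceptional rule; the other two families are analogous. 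The only real obstacle is bookkeeping: keeping the cyclic indexing straight and remembering the notational switch between nodal and smooth-cycle conventions at the boundary between lengths $1$ and $2$.
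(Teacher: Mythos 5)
The paper does not actually prove this proposition: it is quoted verbatim as Lemma~1.4 of \cite{FM83} and used as a black box, so there is no in-paper argument to compare yours against. Taken on its own terms, your combinatorial verification is correct. The generic computation checks out: with $E=(a_1,2^{b_1},\dots,a_l,2^{b_l})$ the dual has length $\sum_j a_j-2l$, which agrees with $-E^2=\sum_i e_i-2n$ for $n\ge 2$ (your remark that the $n=2$ case still contributes $2n$ because the two components meet twice is the right thing to flag), and applying the formula of Remark~\ref{dualformula} twice returns $E$ up to the cyclic shift $a_j\mapsto a_{j+1}$. Two small points are worth tightening. First, your ``generic case'' must explicitly exclude $l=1$, $a_1=3$, since $E=(3,2^{b_1})$ has $E^2=-1$ and falls under the exceptional rules even though it is formally of the shape $(a_1,2^{b_1})$; conversely one should observe that $\sum_j a_j-2l\ge 2$ off that locus, so the iterated application of the generic formula in part (i) never strays into the exceptional families. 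Second, the phrase ``notational convention switches'' is slightly off: the entry of a length-one symbol is still $-E_1^2$ for the unique (nodal) component; what actually changes is that $-E^2=e_1$ rather than $\sum_i e_i-2n$, because there is no second component to intersect. Finally, be aware that your argument establishes the proposition only relative to taking the cycle-of-integers recipe of Remark~\ref{dualformula} as the \emph{definition} of the dual cusp; the source \cite{FM83} (and the Hirzebruch--Looijenga quotient construction recalled in Section~\ref{intro}, where duality is the manifest symmetry exchanging the cones $C$ and $C'$) proves the statement for the geometric definition, of which the combinatorial formula is a consequence. Your route buys a short, self-contained check; the geometric route explains why the formula holds at all.
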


Recall that the embedding dimension of a cusp singularity (more generally this holds true for all minimally elliptic singularities) $E$ is equal to $\max(3,-E^2)$. The proposition above implies that the latter is equal to $\max(3,\textnormal{length}(D))$, therefore we see that since we can have cusp singularities with an exceptional cycle of arbitrary length, then we can have cusp singularities of arbitrary embedding dimension. For cusps of multiplicity $m\leq 5$ we actually know more about the geometry of these embeddings: if $m\leq 3$ then $\px$ embeds in $\C^3$ as a hypersurface, if $m=4$ it embeds as a complete intersection in $\C^4$, finally if $m=5$ it embeds in $\C^5$ as the zero locus of the $4\times 4$ pfaffians of a $5\times 5$ skew matrix.

Cusp singularities have an interesting quotient construction which is due to Hirzebruch \cite{Hi73}. Let us describe the idea of this construction as it appears in \cite{GHK15a}. Let $N\cong \z^2$ and let $A\in \sln$ be a hyperbolic transformation, that is $A$ has a real eigenvalue $\lambda>1$. $A$ determines a pair of dual cusps as follows.\\
Choose two linearly independent eigenvectors $w_1,w_2\in N_\R\cong N\otimes \R$ for $A$ with eigenvalues respectively $1/\lambda$ and $\lambda$ so that $w_1\wedge w_2>0$ (with the standard counterclockwise orientation of $\R^2$). Let $C,C'$ be the interiors of the strictly convex cones spanned by $\{w_1,w_2\}$ and $\{w_2,-w_1\}$. We observe that $C,C'$ are invariant for $A$.\\
Now let $U_C,U_{C'}$ be the corresponding tube domains
$$U_C=\{z\in N_\C \ | \ \Im{z}\in C\}/N\subset N_\C/N\cong (\C^*)^2$$
$A$ acts freely and properly discontinuously on $U_C,U_{C'}$. Write $Y_C,Y_{C'}$ for the holomorphic hulls of $U_C/\Gamma,U_C'/\Gamma$ where $\Gamma$ is the subgroup of $\sln$ generated by $A$. At the level of sets $Y_C$ and $Y_{C'}$ are obtained from  $U_C/\Gamma,U_C'/\Gamma$ by adding one point to each of them, respectively $p\in Y_C,p'\in Y_{C'}$.

\begin{prop}[cfr. Chapter III, Section 2 of \cite{L81}]
$(p\in Y_C)$ and $(p'\in Y_{C'})$ are germs of  two cusp singularities which are dual to each other. Moreover all cusp singularities arise in this way.
\end{prop}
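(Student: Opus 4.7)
The plan is to analyze $(p\in Y_C)$ via toric geometry applied to the cone $C$, identify the exceptional cycle of a minimal resolution combinatorially, and then read off both the cusp property and the duality statement from the relationship between the fans on $C$ and $C'$. The forward direction will be geometric/toric; the converse will be combinatorial, reducing the construction of $A$ to an iterated continued-fraction recursion.

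First I would construct a $\Gamma$-equivariant partial toric resolution. The key combinatorial object is the upper convex hull of $\bar C\cap(N\setminus\{0\})$: its boundary inside $C$ is an infinite convex polygonal line whose vertices $\{v_i\}_{i\in\mathbb{Z}}$ are primitive lattice points cyclically permuted by $A$, so that $A(v_i)=v_{i+n}$ for some $n\geq 1$. The rays $\mathbb{R}_{\geq 0}v_i$ together with the two-dimensional cones generated by consecutive pairs $(v_i,v_{i+1})$ form a fan $\Sigma_C$ refining the interior of $C$; the associated toric surface $X(\Sigma_C)$ contains $U_C/N$ as an open subset and has torus-invariant boundary equal to an infinite chain $\bigcup E_i$ of smooth rational curves meeting transversally. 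The self-intersections $E_i^2=-e_i$ can be read from the toric relation $v_{i-1}+v_{i+1}=e_iv_i$, and the inequalities $e_i\geq 2$ and $\max_j e_j\geq 3$ follow from convexity of the hull plus the fact that the $v_i$ accumulate onto both boundary rays of $C$ (hence cannot be collinear). Passing to the quotient by $\Gamma=\langle A\rangle$ closes this chain into a cycle of length $n$; contracting it gives the point $p\in Y_C$, so $(p\in Y_C)$ is a cusp with cycle $(e_1,\dots,e_n)$.

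Next I would verify that $(p'\in Y_{C'})$ is the dual cusp. The same construction on $C'=\langle w_2,-w_1\rangle$ yields primitive vectors $\{v'_j\}$ with self-intersection sequence $(f_1,\dots,f_m)$. The duality then comes from a standard statement about lattice polygons: vertices of the hull on $C$ correspond bijectively to edges of the hull on $C'$, with a vertex $v_i$ of self-intersection $-e_i$ (with $e_i\geq 3$) contributing $e_i-3$ consecutive $(-2)$'s to the $C'$-sequence, and dually an edge on $C$ passing through $b$ intermediate hull vertices (those with $e=2$) contributing an entry $b+3$ on $C'$. Chasing this correspondence cyclically reproduces exactly the transformation of Remark~\ref{dualformula}; in particular $m=-E^2$.

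For the converse, I would start from any cusp cycle $(e_1,\dots,e_n)$ satisfying the listed constraints and recursively define $v_i\in\mathbb{Z}^2$ by $v_{i+1}=e_iv_i-v_{i-1}$, starting from an arbitrary $\mathbb{Z}$-basis $(v_0,v_1)$. Let $A\in SL_2(\mathbb{Z})$ be the matrix sending $(v_0,v_1)$ to $(v_n,v_{n+1})$; equivalently $A$ is a product of matrices of the form $\left(\begin{smallmatrix} e_i & -1\\ 1 & 0\end{smallmatrix}\right)$. The main technical point, and the step I expect to be the hardest, is showing that $A$ is hyperbolic and that all the $v_i$ lie in a single strictly convex cone bounded by its expanding and contracting eigenrays. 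This requires translating negative definiteness of the intersection matrix attached to $(e_1,\dots,e_n)$ into the estimate $|\operatorname{tr}(A)|>2$ and ruling out degenerate configurations in which consecutive $v_i$ become collinear. Once $A$ is produced, applying the forward construction to it gives a cusp with cycle $(e_1,\dots,e_n)$, and tautness of cusp singularities identifies it analytically with the given germ.
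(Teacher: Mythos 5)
Your proposal is correct and follows the same Hirzebruch--Looijenga construction that the paper's one-line ``idea of the proof'' invokes: the toric resolution from the boundary lattice points of the convex hull $\Xi$, the relation $v_{i-1}+v_{i+1}=e_iv_i$ giving the self-intersections, the vertex/edge duality between the hulls in $C$ and $C'$, and, for the converse, the matrix $A$ built from the cycle $(e_1,\dots,e_n)$. You simply supply the details (notably the hyperbolicity of $A$ and the hull duality matching Remark~\ref{dualformula}) that the paper defers to Chapter III, Section 2 of \cite{L81}.
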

\begin{proof}[Idea of the proof]
Let $E$ represent a cusp singularity with associated cycle $(e_1,\dots,e_n)$. Then the matrix
$$A:=\begin{pmatrix}0 & -1\\ 1 & e_1\end{pmatrix}\begin{pmatrix}0 & -1\\ 1 & e_2\end{pmatrix}\cdots\begin{pmatrix}0 & -1\\ 1 & e_n\end{pmatrix}$$ will produce, through the process we described, the cusp $E$ and its dual $D$.
\end{proof}

\subsection{On the action of $\zz$ on a cusp singularity}
In this work we are interested in studying germs of cusp singularities that admit a $\zz$-action and their equivariant smoothings. Let us begin this section with the following definition.
\begin{defin}
Let $\px$ be the germ of a cusp singularity. An involution $\iota$ is \emph{antisymplectic} if there exists a nowhere vanishing holomorphic 2-form $\Omega$ on $X\setminus \{p\}$ for which $\iota^*(\Omega)=-\Omega$.
\end{defin}

Equivalently, the induced involution on the minimal resolution $\pi:\xres\longrightarrow X$ reverses the orientation of the exceptional cycle $E$. We observe that not all cusp singularities admit an antisymplectic involution that is also fixed point free on $X\setminus \{p\}$, as shown in the following proposition. This result is stated for instance in \cite{NW03}, but we will provide a brief proof for it below.

\begin{prop}\label{sym}
Let $\px$ be a cusp singularity that admits an antisymplectic involution $\iota$ that is fixed point free on $X\setminus \{p\}$. Then $E$ has the following properties:
\begin{itemize}
\item[i.] If $E=\bigcup_{i=1}^{n}E_i$, then each irreducible component $E_i$ is sent to some other irreducible component $E_{\sigma(i)}$ where $\sigma$ is a reflection in the dihedral group $D_n$. 
\item[ii.] None of the nodes in $E$ is fixed by $\iota$. Instead $\iota$ fixes setwise two of the irreducible components of $E$. In particular $n$ is even: without loss of generality we can always label the fixed components $E_n$ and $E_{n/2}$.
\item[iii.] Let $e_i$ be equal to ${-E_i}^{2}$. Then $e_n$ and $e_{n/2}$ are even and $e_i=e_{\sigma(i)}$ for all $i=1,\dots,n$. 
\end{itemize}
\end{prop}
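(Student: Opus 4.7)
The plan is to pull $\iota$ back to the minimal resolution $\xres$ and combine a combinatorial analysis of its action on the dual graph of $E$ with a local computation involving the anti-invariant $2$-form.

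First I lift $\iota$ to a holomorphic involution $\tilde\iota$ on $\xres$, which automatically preserves the exceptional cycle $E$. Pulling $\Omega$ back gives a meromorphic $2$-form $\tilde\Omega = \pi^*\Omega$ with logarithmic poles along $E$ (standard, since $(X,p)$ is minimally elliptic), still satisfying $\tilde\iota^*\tilde\Omega = -\tilde\Omega$. Because $\iota$ is free on $X\setminus\{p\}$, the fixed locus of $\tilde\iota$ on $\xres$ is contained in $E$. The equivalence recalled just before the proposition says that $\tilde\iota$ reverses the orientation of the topological cycle $E$, so as an automorphism of the $n$-cycle dual graph $\tilde\iota$ is an orientation-reversing element of $D_n$, i.e.\ a reflection; this reflection is the permutation $\sigma$ of claim (i).

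For (ii), I would show that no node of $E$ is fixed. Suppose $q=E_i\cap E_{i+1}$ is fixed, and work in local coordinates $(z_1,z_2)$ with $E_i=\{z_1=0\}$, $E_{i+1}=\{z_2=0\}$ and $\tilde\Omega = h\,\frac{dz_1}{z_1}\wedge\frac{dz_2}{z_2}$ for a unit $h$. Either $\tilde\iota$ preserves each branch, in which case a direct computation of $\tilde\iota^{*}$ on the logarithmic residue at $q$ gives $\tilde\iota^*\tilde\Omega|_q = +\tilde\Omega|_q$, contradicting the antisymplectic relation; or $\tilde\iota$ swaps the two branches, in which case the linearization $(z_1,z_2)\mapsto(\gamma z_2,\gamma^{-1}z_1)$ has fixed locus the smooth curve $\{z_1=\gamma z_2\}$, transverse to $E$ and hence containing points in $\xres\setminus E$, which contradicts freeness. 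Thus no node is fixed. Among reflections in $D_n$ only those whose axis passes through two opposite vertices fix no edges, so this forces $n$ to be even and yields exactly two fixed components, which I label $E_n$ and $E_{n/2}$.

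For (iii), the equality $e_i=e_{\sigma(i)}$ is immediate from $\tilde\iota$ restricting to a biholomorphism $E_i\to E_{\sigma(i)}$. For $e_n$ even, I would argue as follows. The two nodes on $E_n$ are swapped by $\tilde\iota$ (they are not fixed but must map to nodes on $\tilde\iota(E_n)=E_n$), so $\tilde\iota|_{E_n}$ is a nontrivial involution of $\pl$ with exactly two fixed points $q_1,q_2$ in the smooth locus of $E$. A local computation at $q_j$ in coordinates $(z,w)$ with $E_n=\{z=0\}$ and $\tilde\Omega = h\,\frac{dz}{z}\wedge dw$ forces the eigenvalue of $d\tilde\iota_{q_j}$ along $E_n$ to be $-1$ (so $q_j$ is isolated on $E_n$), and also forces the eigenvalue on the conormal $N^{*}_{E_n/\xres}|_{q_j}$ to be $-1$, since the alternative $+1$ would produce a smooth germ of fixed curve transverse to $E_n$ at $q_j$, hence fixed points off $E$. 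Thus the induced $\zz$-linearization of $N_{E_n/\xres}\cong\mathcal{O}_{\pl}(-e_n)$ acts by $-1$ on both fixed-point fibers. A standard fact about $\zz$-linearizations of $\mathcal{O}_{\pl}(k)$ lifting a nontrivial involution is that the product of the two fixed-fiber eigenvalues equals $(-1)^k$ independently of the linearization; the required product $(-1)(-1)=1$ therefore forces $-e_n$ even, hence $e_n$ even. The same argument applies to $E_{n/2}$.

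The main obstacle I anticipate is the local $2$-form bookkeeping: computing the action of $\tilde\iota^{*}$ on logarithmic residues correctly both at a node (where two branches of the pole meet) and at a smooth point of $E$ (a single branch), and tracking which eigenvalues of $d\tilde\iota$ are thereby forced. Once these local signs are pinned down, the combinatorial step in $D_n$ and the $\mathcal{O}_{\pl}(k)$-linearization argument are routine.
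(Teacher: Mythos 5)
Your proposal is correct, and for parts (i) and (ii) it follows essentially the same line as the paper: lift $\iota$ to the minimal resolution, note that orientation reversal forces the dual-graph action to be a reflection, and rule out fixed nodes by observing that a branch-swapping linearization at a node fixes the line $z_1=\gamma z_2$, contradicting freeness off $E$. (You additionally dispose of the branch-preserving case at a fixed node via the log residue of $\tilde\Omega$; the paper gets this for free from orientation reversal, which already forces the two branches at a fixed node to be swapped. Either justification works.)

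For part (iii) your argument is genuinely different from the paper's. To show $e_n$ is even, the paper passes to the quotient $\tilde\rho:\xres\rightarrow \hat Z$, observes that the two fixed points on $E_n$ become $A_1$ singularities on the image curve $\hat F_n$, resolves them, and computes $F_n^2=\hat F_n^2-1$ from $F_n=\tilde\pi^*\hat F_n-\frac12 G_3-\frac12 G_4$; since $F_n^2\in\z$ this forces $\hat F_n^2=\frac12 E_n^2\in\z$, i.e.\ $e_n$ even. You instead stay upstairs and use the $\zz$-linearization of $N_{E_n/\xres}\cong\mathcal O_{\pl}(-e_n)$: freeness off $E$ forces the normal eigenvalue at each of the two fixed points to be $-1$, and the product of the two fixed-fiber eigenvalues of any involutive linearization of $\mathcal O_{\pl}(k)$ is $(-1)^k$ (since $\lambda_\infty=\lambda_0(-1)^k$ and $\lambda_0^2=1$), so $(-1)^{-e_n}=1$. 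Both computations are correct and are really two faces of the same half-integer bookkeeping; your version is more local and self-contained, while the paper's version has the side benefit of setting up the quotient surface $\hat Z$ with its four $A_1$ singularities, which is reused immediately afterwards (in the description of the quotient cusp and in Proposition \ref{uniqueinv}). One cosmetic remark: in your step (iii) the $2$-form plays no actual role — the eigenvalue constraints come entirely from freeness and from $\tilde\iota|_{E_n}$ being a nontrivial involution of $\pl$ — so the mention of $\tilde\Omega$ there can be dropped.
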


\begin{proof}
Suppose there exists an involution $\iota$ as stated above.
It is clear that each $E_i$ has to be sent to some other $E_j$, where $j$ might be equal to $i$. This implies that the corresponding action on the dual graph has to be given by an element $\sigma$ of order 2 in the dihedral group of order $2n$. Moreover we claim that $\iota$ cannot fix a node of the exceptional cycle $E$. Indeed suppose it does, then we can choose local coordinates on a neighbourhood $U$ of the fixed node $p$ so that locally $E=(xy=0)$ and the action is given by the matrix
$$\begin{pmatrix} 0 & 1 \\ 1 & 0 \end{pmatrix}$$
It follows that $\iota$ fixes a line, which gives a contradiction to the original assumption. As a consequence the number of irreducible components of $E$ has to be even and the induced action on the dual graph has to be given by a reflection fixing 2 vertices, because we want the orientation of the cycle to be reversed. This proves (i) and (ii).
Finally, it is  easy to check that if $E_i$ is sent to $E_{\sigma(i)}$ then $e_i$ has to be equal to $e_{\sigma(i)}$. Let us focus now on $E_{n/2},E_n$, which according to our labeling are the components of $E$ fixed by the action: each of these curves is a rational curve with an involution defined on it. Since the automorphism we are considering does not fix these curves pointwise, then it has to fix 2 distinct points on each one of them. Consider the quotient $\tilde\rho:\xres\rightarrow \hat Z$ given by the involution. Here $E_{n/2},E_n$ are mapped to the rational curves $\hat F_{n/2}, \hat F_n$ and each curve contains two $A_1$ singularities corresponding to the fixed points: the minimal resolution of these singularities, $\tilde \pi:\tilde Z\rightarrow \hat Z$, is the composition of four blowups at the four distinct singular points on $\hat F_{n/2},\hat F_n$. For $i=n/2,n$, let $F_i\subset \tilde Z$ be the strict transform of $\hat F_i$ and $G_1,\dots,G_4$ be the exceptional divisors. We have 
$$F_{n/2}=\tilde\pi^*\hat F_{n/2}-\frac{1}{2}G_1-\frac{1}{2}G_2 \quad \textnormal{and} \quad F_{n}=\tilde\pi^*\hat F_{n}-\frac{1}{2}G_3-\frac{1}{2}G_4$$
which gives
$$F_{n/2}^2=\hat F_{n/2}^2+\left(\frac{1}{2}G_1\right)^2+\left(\frac{1}{2}G_2\right)^2=\hat F_{n/2}^2-\frac{1}{2}-\frac{1}{2}=\hat F_{n/2}^2-1$$
and similarly, $F_n^2=\hat F_n^2-1$. Therefore $\hat F_{n/2}^2=1/2E_{n/2}^2$ and $\hat F_{n}^2=1/2E_{n}^2$ have to be integers, which implies that $e_{n/2},e_n$ have to be even. Thus (iii) is verified.
\end{proof}

The proposition above and its proof give necessary conditions for a cusp singularity to admit an involution with the required properties, which can be summarized in the following definition.

\begin{defin}\label{symmdef}
We say a cusp $E$ is \textit{symmetric} if there exists a labeling of $E$ and a reflection $\sigma$ in the dihedral group of order $2n$ fixing $n$ and $n/2$ such that $E_n^2,E_{n/2}^2$ are even and $E_i^2=E_{\sigma(i)}^2$ for every $i=1,\dots,n$. In particular, the length of a symmetric cusp $E$ has to be even.
\end{defin}

\begin{remark}\label{symmetricdual} 
If a cusp $E$ is symmetric then the dual cusp $D$ is symmetric as well. This follows immediately from the way the cycle of integers for the dual cusp is constructed starting from the one of $E$. Indeed, let $n,m$ be respectively the lengths of $E$ and $D$ and consider $E_n$, one of the two curves fixed by $\sigma$ according to our convention on labels. Suppose that $e_n=-E_n^2>2$. Then, since $e_n$ is even, it produces an odd number of $(-2)$ curves in the dual cycle: call $D_m$ the central curve among them. Viceversa, if $e_n=2$, then $E_n$ is the central curve in a chain of $2l+1$  $(-2)$-curves for some integer $l$ and therefore it corresponds in $D$ to a curve of self intersection $-2l+2$: again, label this curve $D_m$. The same reasoning applied to $E_{n/2}$ gives a curve labeled $D_{m/2}$. Finally the symmetry of the remaining self intersections carries on to $D$, thus giving a $\zz$ action on the dual cycle which fixes exactly $D_{m/2}$ and $D_m$.
\end{remark}

The conditions given in definition \ref{symmdef} are also sufficient to construct an antisymplectic involution that is fixed point free on $X\setminus \{p\}$ on a cusp singularity. In the following proposition we state this result and construct an explicit involution on a given symmetric cusp, using the quotient construction of the singularity presented in section 1.1.

\begin{prop}\label{cuspact}
Given a cusp singularity $\px$, an antisymplectic involution that is fixed point free on $X\setminus \{p\}$ exists if and only if the associated exceptional cycle $E$ is symmetric.
\end{prop}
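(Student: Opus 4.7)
\emph{Proof plan.}

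The ``only if'' direction is immediate from Proposition \ref{sym} combined with Definition \ref{symmdef}: items (i)--(iii) of the proposition force $E$ to be symmetric. For the converse, the plan is to realize the germ $\px$ via Hirzebruch's quotient construction of Section \ref{intro} as $(p\in Y_C)$, where $Y_C$ is the holomorphic hull of $U_C/\langle A\rangle$ with $A=M_1M_2\cdots M_n\in \textnormal{SL}_2(\mathbb{Z})$, and then to produce $\iota$ by descending from a carefully chosen element $B\in \textnormal{GL}_2(\mathbb{Z})$.

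Concretely, I want $B$ satisfying $B^2=I$, $\det B=-1$, $BAB^{-1}=A^{-1}$, and $B(C)=C$, so that it descends first to $U_C$ and then to the quotient $U_C/\langle A\rangle$. To build it, I would start from $B_0=\begin{pmatrix}0 & 1\\1 & 0\end{pmatrix}$, which satisfies $B_0M_kB_0^{-1}=M_k^{-1}$, and compose with a prefix of the form $M_j^{-1}\cdots M_1^{-1}$, the index $j$ being dictated by the positions of the fixed components $n$ and $n/2$ in the cycle. The palindromic identity $M_i=M_{n-i}$ then yields $BAB^{-1}=A^{-1}$ by a direct manipulation of the product, while the evenness of $e_n$ and $e_{n/2}$ is what keeps the $-1$-eigenvector of $B$ integral and enforces $B^2=I$. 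Cone preservation follows, after possibly replacing $B$ by $-B$, from a sign computation on the swap $B:w_1\leftrightarrow w_2$ of the two eigenvectors of $A$.

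Once $B$ is in hand, antisymplecticity is automatic: the $2$-form $\Omega=\frac{dz_1}{z_1}\wedge \frac{dz_2}{z_2}$ on the torus $N_{\mathbb{C}}/N$ descends to a nowhere vanishing $2$-form on $Y_C\setminus\{p\}$ and transforms by $\det(B)=-1$ under the induced involution, so $\iota^*\Omega=-\Omega$. The hard part will be verifying fixed-point freeness on $U_C/\langle A\rangle$: a fixed point corresponds to $z\in U_C$ with $(B-A^k)z\in N$ for some $k\in\mathbb{Z}$, and I expect the evenness hypothesis on $e_n,e_{n/2}$ (the same input that secured $B^2=I$) to show that for every $k$ the corresponding affine fixed locus either lies outside $U_C$ or accumulates only at the cusp at infinity. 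Pulling back to the minimal resolution $\tilde X$, the surviving fixed points will then be exactly the four isolated $A_1$-type ones on $E_n\cup E_{n/2}$ predicted by Proposition \ref{sym}, all contracting to $p$ under $\pi:\tilde X\to X$, which gives the required fixed-point freeness on $X\setminus\{p\}$.
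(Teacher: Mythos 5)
Your ``only if'' direction and your overall route for the converse (realize $\px$ as $(p\in Y_C)$ via the Hirzebruch quotient construction and descend an involution from a $B\in\GL_2(\z)$ with $B^2=I$, $BAB^{-1}=A^{-1}$, $B(C)=C$) are the same as the paper's. But there is a genuine gap at the step you flag as ``the hard part'': a purely linear $B$ can \emph{never} be fixed point free on $Y_C\setminus\{p\}$, and no amount of exploiting the evenness of $e_n,e_{n/2}$ will fix this. The $+1$-eigenvector of $B$ is the lattice vector $v_0$, which lies in the open cone $C$; hence in the eigenbasis $\{v_0,u_0\}$ (where $u_0=-\tfrac{e_0}{2}v_0+v_1\in N$ precisely because $e_0$ is even) the map $B$ acts on the torus as $(x,y)\mapsto(x,y^{-1})$, whose fixed locus $\{y=\pm1\}$ meets $U_C$ in positive-dimensional sets (take $\Im z\in\R_{>0}v_0\subset C$). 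These descend to curves in $Y_C\setminus\{p\}$ fixed by the involution; equivalently, on $\tilde X$ the fixed locus of $B$ contains curves transverse to $E_n$ and $E_{n/2}$, not just the four isolated points you predict, and those curves do not contract under $\pi$. Your claim that each affine fixed locus ``lies outside $U_C$ or accumulates only at the cusp'' fails already for $k=0$.

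The missing idea, which is the heart of the paper's proof, is to compose $B$ with translation by a $2$-torsion element $t\in\frac{1}{2}N/N$ of the torus $N_\C/N$, setting $\vartheta_{B,t}(z)=Bz+t$. One then needs: (a) $t\notin\{0,\tfrac{1}{2}u_0,\tfrac{1}{2}u_{n/2}\}$, so that both reflection classes in the dihedral group $\gen{A,\vartheta_{B,t}}$ act without fixed points on $U_C$ (in coordinates, the translation must contribute a $-1$ in the $v_0$-direction, turning $(x,y)\mapsto(x,y^{-1})$ into $(x,y)\mapsto(-x,\pm y^{-1})$, and similarly at $v_{n/2}$); since $\frac{1}{2}N/N$ has order four such a $t$ always exists; and (b) compatibility of the translation with the $\Gamma$-action, i.e.\ $At\equiv t\bmod N$, which the paper derives from $A\equiv I\bmod 2$ (Lemma \ref{amod2}) --- this is a second, independent use of the evenness of $e_n,e_{n/2}$ that your plan does not anticipate. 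Without the twist by $t$ and these two verifications, the construction does not produce an involution that is free on $X\setminus\{p\}$. (Minor point: $B^2=I$ holds for any $e_0$; evenness is needed only to make $u_0$ integral, and then again in Lemma \ref{amod2}.)
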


\begin{proof}
For this proof we refer to the quotient construction of a cusp singularity described in section \ref{intro}: let $A$ be the matrix associated to $\px$, let $w_1,w_2$ be a pair of eigenvectors in $N_{\mathbb{R}}$ for $A$. Recall that $C$ is the open cone generated by $w_1,w_2$ and $v_i$ for $i\in\mathbb{Z}$ are the lattice points on the boundary of $\Xi$, where $\Xi$ is the convex hull of the lattice points contained in $C$.\\
The forward direction of the statement follows from (i) and (iii) of the above proposition. Now suppose $E$ is symmetric and $n/2,n$ are the indices fixed by the reflection $\sigma$. It follows from the definition of a symmetric cusp that $e_n$ and $e_{n/2}$ are even and $e_i=e_{n-i}$ for $i\neq n$, $i\neq n/2$. Moreover, recall that we have the relation $e_0v_0=v_{-1}+v_1$ (where $e_0=e_n=E^2_n$). We can thus define an action on $N$ fixing $v_0$ as follows: choose $\{v_0,v_1\}$ as a basis (we can do that since the toric chart is smooth) for the lattice and let $B$ be the linear map given by the matrix
$$B=\begin{pmatrix}1 & e_0 \\ 0 & -1\end{pmatrix}$$
Then $B$ satisfies:
$$B^2=I, \quad Bv_0=v_0 \quad\textnormal{and} \quad Bv_1=e_0v_0-v_1=v_{-1}$$
Similarly, the involution $B$ maps each $v_i$ to $v_{-i}$: by induction, suppose $Bv_{i-1}=v_{1-i}$ and $Bv_{i-2}=v_{2-i}$, then $Bv_i=B(e_{i-1 \textnormal{ mod }n}v_{i-1}-v_{i-2})=e_{i-1 \textnormal{ mod }n}v_{1-i}-v_{2-i}$ and $e_{i-1 \textnormal{ mod }n}=e_{1-i \textnormal{ mod }n}$, thus $Bv_i=v_{-i}$. It follows that the cone $C$ is invariant under $B$, considered as a map on $N_\R$ and we can always rescale $w_1,w_2$ so that $Bw_1=w_2$. Thus $B$ induces an involution on $U_C=N_\R+iC/N$. Now let us consider the transformation $A\in\sln$ and the quotient $U_C/\Gamma$, where $\Gamma$ is the infinite cyclic group generated by $A$. For all $v_i\in N$ we have that $B(Av_i)=Bv_{i+n}=v_{-(i+n)}$ and $v_{-(i+n)}=A^{-1}(v_{-i})=A^{-1}(Bv_i)$. This holds true in particular for $v_0,v_1$, therefore we get the relation 
\begin{equation}\label{dihedraleq}BA(v+iw)=A^{-1}B(v+iw) \ \textnormal{for all} \ v+iw \in N_\C \end{equation} and $A,B$ generate an infinite dihedral group $\mathcal D_{A,B}$ (isomorphic to the semidirect product $\z\rtimes \zz$).

It will be useful for our proof to study the eigenvectors for $B$ more in detail. Observe that $v_0$ is an eigenvector for $B$ which belongs to the lattice $N$ and it is primitive. The second eigenspace for $B$ relative to the eigenvalue $-1$ is given by the equation $2x+e_0y=0$, therefore it is generated by the eigenvector $u_0=-\frac{e_0}{2}v_0+v_1$ which again belongs to $N$ because $e_0$ is an even integer and it is a primitive vector for this lattice. In fact the pair of eigenvectors $v_0,u_0$ forms a basis for the lattice $N$ because $\{v_0,v_1\}$ is a basis of $N$. To see this, it suffices to consider the change of basis given by the matrix $P$ described below:
$$P=\begin{pmatrix} 1 & \frac{e_0}{2} \\ 0 & -1\end{pmatrix}
\qquad \Rightarrow \qquad 
P^{-1}BP=\begin{pmatrix} 1 & 0 \\ 0 & -1\end{pmatrix}$$
Moreover, $u_0$ is contained in $C'$, the cone dual to $C$. In order to show this let us first write down $w_1,w_2$ in terms of $v_0,v_1$. The fact that $Bw_1=w_2$ and viceversa, the condition that $w_1\wedge w_2>0$ and the convention we use to label the vectors $v_i$ give us the following:
$$w_1=(\alpha+e_0\beta) v_0 -\beta v_1 \qquad w_2=\alpha v_0 +\beta v_1 $$
where $\alpha<0<\beta$ and $\alpha/\beta$ is irrational. Given that $u_0=-\frac{e_0}{2}v_0+v_1$, we can then write $u_0$ in terms of $w_1,w_2$, obtaining that $u_0=-(2\beta)^{-1}w_1+(2\beta)^{-1}w_2$. Since $\beta$ is positive, then we can conclude that $u_0\in C'$. 

Now, if we defined our involution on the cusp singularity using only the linear involution $B$, we would obtain a map that is not fixed point free on $Y_C\setminus \{p\}$. Therefore we compose $B$ with the translation by an element of the torus $t\in N_\C/N$. More precisely, since we still want to construct an involution, we have to choose a two torsion element (thus contained in $\frac{1}{2}N/N$). In order for the final involution $\vartheta_{B,t}$ to be fixed point free on the quotient of $U_C$ by the action of $A$, we need $\vartheta_{B,t}(v+iw)\neq A^{-k}(v+iw)$ for all $k\in\z$. Equivalently, we need $B(v+iw)+t\neq A^{-k}(v+iw)$ for all $k\in\z$, since we want $\vartheta_{B,t}$ to be defined as the composition of $B$ with the translation by $t$. Note that this is the same as asking that  $A^kB(v+iw)+t\neq v+iw$, because $At=t$ mod $N$, or equivalently $A(2t)=2t$ mod $2N$. This holds true since if $t\in \frac{1}{2}N/N$, then $2t$ lies in $N$, therefore the fact that $A$ is congruent to the identity matrix mod 2, as proved in lemma \ref{amod2}, immediately implies that $A(2t)=2t$ mod $2N$. Since there are two equivalence classes of reflections in the dihedral group generated by $A$ and $B$, then we only need to very this inequality for two representatives of this classes, as we do with the following reasoning.

Consider the isomorphism between $N_\C/N$ and $(\C^*)^2$ given by the exponential map. Fix $v_0,u_0$ as a basis for $N$. Then $B$ is associated to the matrix
$$\begin{pmatrix} 1 & 0 \\ 0 & -1 \end{pmatrix}$$
Thus, under the isomorphism between $N_\C/N$ and $(\C^*)^2$, the involution $B$ corresponds to the map $(x,y)\mapsto (x,y^{-1})$. In order for $\vartheta_{B,t}$ to be fixed point free we need the translation by $t$ to correspond to the map $(x,y)\mapsto(-x,\pm y)$, so that the pair $(B,t)$ acts on $(\C^*)^2$ via the map
$$\vartheta_{B,t}:(x,y)\mapsto(-x,\pm y^{-1})$$
Indeed the variable $x$ corresponds to the character $v_0^*$ and, thinking in terms of the minimal resolution of $p\in Y_C$, it corresponds to the direction which is normal to the divisor $E_n$. 
With respect to the basis chosen above, we want $t=av_0+bu_0$ such that $(a,b)+i(0,0) \mapsto (e^{a\cdot 2\pi i},e^{b\cdot 2\pi i})=(-1,\pm 1)$ in $(\C^*)^2$, thus we need $a=\frac{1}{2}$. Equivalently, $t$ cannot be equal to $0,\frac{1}{2}u_0$, that is it cannot be a pure multiple of $u_0$. The same reasoning has to be made for the other fixed component of $E$, $E_{n/2}$, thus $t$ cannot be a multiple of $u_{n/2}:=\frac{e_{n/2}}{2}v_{n/2}+v_{n/2+1}$. Therefore $t\in \frac{1}{2}N/N\setminus \{0,\frac{1}{2}u_0,\frac{1}{2}u_{n/2}\}$. Since $\frac{1}{2}N/N$ has order four, there is always an element $t$ that works.

We can now define our involution given by the pair $(B,t)$ on $N_\C/N$ as the composition of multiplication by $B$ and translation by $t$:
\begin{equation}\label{thetadef}\vartheta_{B,t}:\ v+iw\mapsto B(v+iw)+t=(Bv+t)+iBw\end{equation}
Observe that if $v+iw \in U_C$, meaning that $w\in C$, then $Bv+iBw$ still belongs to $U_C$ because the cone $C$ is invariant under $B$ and $Bv+t+iw\in U_C$ as well, since the translation by $t$ does not affect the imaginary part $w$. 
The involution $\vartheta_{B,t}$ is constant on the equivalence classes relative to the action of $A$, thus giving a well defined map on $U_C/\Gamma$. Indeed, following (\ref{thetadef}), $\vartheta_{B,t}(v+iw)=B(v+iw)+t$, thus $\vartheta_{B,t}(A(v+iw))=BA(v+iw)+t=A^{-1}B(v+iw)+t$. Note that in the last equality we used the dihedral relation (\ref{dihedraleq}). Summing up and using the fact that $At=t$ mod $N$ we get that 
\begin{equation*}\begin{split}
\vartheta_{B,t}(A(v+iw))&=BA(v+iw)+t \\
&=A^{-1}B(v+iw)+t \\
&=A^{-1}B(v+iw)+A^{-1}t \\
&=A^{-1}(\vartheta_{B,t}(v+iw))\end{split}\end{equation*}
mod $N$. Therefore we can conclude that indeed $\vartheta_{B,t}$ is constant on the equivalence classes.

Given what we discussed above, the pair $(B,t)$ defines a fixed point free map on the algebraic torus and therefore on $U_C$ that descends to a well defined and still fixed point free analytic map on the quotient $U_C/\Gamma$ which extends to its partial compactification $Y_C$ as an analytic map fixing the cusp singularity $p\in Y_C$.
\end{proof}

\begin{lem}\label{amod2}
Let $N$ be a two dimensional lattice and let $A$ be a transformation in $\textnormal{SL}(N)$ associated to a symmetric cusp. Then $A$ is congruent to the identity matrix mod 2N.
\end{lem}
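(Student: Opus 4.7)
I would use the explicit factorization
\[
A \;=\; M_{e_1}M_{e_2}\cdots M_{e_n}, \qquad M_e := \begin{pmatrix} 0 & -1\\ 1 & e\end{pmatrix},
\]
recalled in the sketch of proof at the end of Section~\ref{intro}, and reduce everything modulo~$2$. The symmetric cusp hypothesis (Definition~\ref{symmdef}) says that, after a suitable labeling, $e_i = e_{n-i}$ for $i=1,\dots,n/2-1$ together with $e_{n/2}$ and $e_n$ even. Hence $M_{e_{n-i}} = M_{e_i}$, and the product takes the palindromic form
\[
A \;=\; P\cdot M_{e_{n/2}}\cdot P^{R}\cdot M_{e_n},
\]
where $P := M_{e_1}\cdots M_{e_{n/2-1}}$ and $P^R$ denotes the same product with the factors in reverse order.

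Next I would reduce mod~$2$. Each factor $M_e$ depends only on the parity of $e$: it becomes $S := \bigl(\begin{smallmatrix}0 & 1\\ 1 & 0\end{smallmatrix}\bigr)$ for $e$ even and $T := \bigl(\begin{smallmatrix}0 & 1\\ 1 & 1\end{smallmatrix}\bigr)$ for $e$ odd. In $\textnormal{SL}_2(\mathbb F_2)\cong S_3$ the relations $S^2 = I$ and $STS = T^{-1}$ give $SwS = w^{-1}$ for \emph{both} generators $w\in\{S,T\}$. The central observation is then that, for any word $P = w_1\cdots w_k$ in these two letters,
\[
SPS \;=\; (Sw_1S)(Sw_2S)\cdots(Sw_kS) \;=\; w_1^{-1}\cdots w_k^{-1} \;=\; (P^R)^{-1},
\]
equivalently $P^R \equiv SP^{-1}S \pmod 2$. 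Since $e_{n/2}$ and $e_n$ are even, $M_{e_{n/2}}\equiv M_{e_n}\equiv S$, so substituting yields
\[
A \;\equiv\; P\cdot S\cdot (SP^{-1}S)\cdot S \;=\; P\cdot P^{-1} \;=\; I \pmod{2},
\]
which is the statement of the lemma.

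The only delicate point, and the place I expect to spend most of the effort, is the index bookkeeping: one must verify that the reflection $\sigma$ fixing the vertices labeled $n/2$ and $n$ of the cycle really does produce the palindromic pattern $(e_1,\dots,e_{n/2-1},e_{n/2},e_{n/2-1},\dots,e_1,e_n)$ around the two even ``pivots,'' so that the factorization $A = P\cdot M_{e_{n/2}}\cdot P^R\cdot M_{e_n}$ is literally valid. Once this setup is in place, the whole lemma collapses to the one-line identity $SwS = w^{-1}$ holding uniformly over the two generators of $\textnormal{SL}_2(\mathbb F_2)$.
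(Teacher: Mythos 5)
Your proof is correct and is essentially the paper's own argument: the paper uses the same palindromic factorization $A = M_{e_1}\cdots M_{e_{n/2}}\cdots M_{e_1}M_{e_n}$ and the identity $M_e J M_e = J$ with $J=\bigl(\begin{smallmatrix}0&1\\1&0\end{smallmatrix}\bigr)$, which is exactly your $SwS=w^{-1}$ rearranged (using $S^2=I$), applied recursively from the middle of the word outward rather than packaged as the single conjugation formula $SPS=(P^R)^{-1}$. The index bookkeeping you flag as the delicate point is handled in the paper simply by writing out the product in equation form, so no further work is needed there.
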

\begin{proof}
A matrix $A$ associated to a symmetric cusp can always be written as a product of $n$ matrices. More precisely, if we fix $v_0,v_1$ as a basis for $N$ then $A$ is given by:
\begin{equation}\label{Amult}A=\begin{pmatrix}0 & -1 \\ 1 & e_1\end{pmatrix}\begin{pmatrix}0 & -1 \\ 1 & e_{2}\end{pmatrix} \cdots \begin{pmatrix}0 & -1 \\ 1 & e_{n/2}\end{pmatrix} \begin{pmatrix}0 & -1 \\ 1 & e_{n/2-1}\end{pmatrix}\cdots\begin{pmatrix}0 & -1 \\ 1 & e_1\end{pmatrix} \begin{pmatrix}0 & -1 \\ 1 & e_n\end{pmatrix}\end{equation}
since $e_{i}=e_{n-i}$ for $i=1,\dots,n$. Besides $e_{n/2},e_n$ are even integers, therefore the two corresponding matrices are congruent to the matrix
$$J=\begin{pmatrix}0 & 1 \\ 1 & 0\end{pmatrix}$$
mod 2. Now observe that for a fixed (positive) integer $e$, we have that
$$\begin{pmatrix}0 & -1 \\ 1 & e\end{pmatrix}J\begin{pmatrix}0 & -1 \\ 1 & e\end{pmatrix}=J$$
Therefore, mod 2, we get
$$\begin{pmatrix}0 & -1 \\ 1 & e_{n/2-1}\end{pmatrix} \begin{pmatrix}0 & -1 \\ 1 & e_{n/2}\end{pmatrix} \begin{pmatrix}0 & -1 \\ 1 & e_{n/2-1}\end{pmatrix}\equiv \begin{pmatrix}0 & -1 \\ 1 & e_{n/2-1}\end{pmatrix} J \begin{pmatrix}0 & -1 \\ 1 & e_{n/2-1}\end{pmatrix}= J$$
and, using this recursively in (\ref{Amult}) for each $i$ until $i=1$ we obtain
$$A\equiv J\cdot \begin{pmatrix}0 & -1 \\ 1 & e_n\end{pmatrix}\equiv J\cdot J=I \qquad \textnormal{mod}\ 2$$
Therefore $A$ is congruent to the identity matrix mod 2, as required.
\end{proof}

An involution on a cusp singularity $\px$ is determined by the $\zz$-action on the exceptional cycle $E$, in the following sense. Let $\px$ be a symmetric cusp singularity, let $E$ be its exceptional cycle and $\sigma$ a reflection acting on the components of $E$, as in definition \ref{symmdef}. Suppose we are given an antisymplectic involution $\iota$ which acts as the reflection $\sigma$ on $E$ and is fixed point free away from $p$. Let $\rho:\px\rightarrow\qz$ be the quotient map associated with $\iota$. Then $\qz$ is the germ of a rational isolated singularity, usually referred to as the cusp quotient singularity \cite{NW03} and the map $\rho$ gives an \'etale covering of $Z\setminus\{q\}$. Let $\tilde\rho:\xres\rightarrow \hat Z$ be the quotient of $\xres$ by the action corresponding to the involution $\tilde\iota$ on the minimal resolution of $\px$. We get the commutative diagram:
 $$\xymatrix{ E\subset\xres  \ar[r]^{\pi} \ar[d]_{\tilde\rho} & \px \ar[d]^{\rho}\\ \hat F\subset\hat Z \ar[r]_{\hat\pi} & \qz}$$
where $\hat F=\bigcup_{i=1}^{n/2-1} \hat F_i \cup \hat F_{n/2} \cup \hat F_n$ is the image of $E$ under $\tilde\rho$, more precisely $\tilde\rho^{-1}(\hat F_i)=E_i\cup E_{n-i}$ for $i\neq n/2,n$ while $\tilde\rho^{-1}(\hat F_i)=E_i$ for $i=n/2,n$. The proof of proposition \ref{cuspact} shows that $\hat Z$ contains four isolated $A_1$ singularities which lie in pairs on $\hat F_{n/2},\hat F_n$. The minimal resolution $\pi_Z:\tilde Z \rightarrow Z$ of the singularity $\qz$ is obtained by composing the minimal resolution $\tilde \pi:\tilde Z\rightarrow \hat Z$ of the $A_1$ singularities on $\hat Z$ with the map $\hat \pi$.
In particular the exceptional locus $F=\pi_Z^{-1}(q)$ is the union of the rational curves $\bigcup_{i=1}^{n/2-1} F_i \cup F_{n/2} \cup F_n \cup G_{1} \cup \dots \cup G_{4}$ where $G_{1},\dots, G_{4}$ are the exceptional divisors of the resolution of the $A_1$ singularities, $F_i=\tilde\pi^{-1}(\hat F_i)$ for $i\neq n/2,n$ and finally $F_{n/2},F_n$ are the strict transforms of $\hat F_{n/2},\hat F_n$ under $\tilde \pi$. The dual graph for $F$ is described in figure \ref{quotcusp}. 

\begin{figure}[h]
\begin{center}
\begin{tikzpicture}
\draw (-4,0.5) -- (-3,0);
\draw (-4,-0.5) -- (-3,0);
\draw (-3,0) -- (-2,0);
\draw (-2,0) -- (-1,0);
\draw [dashed] (-1,0) -- (1,0);
\draw (1,0) -- (2,0);
\draw (2,0) -- (3,0);
\draw (3,0) -- (4,0.5);
\draw (3,0) -- (4,-0.5);
\node at (-4,0.5) {$\bullet$};
\node at (-4,-0.5) {$\bullet$};
\node at (-3,0) {$\bullet$};
\node at (-2,0) {$\bullet$};
\node at (-1,0) {$\bullet$};
\node at (4,0.5) {$\bullet$};
\node at (4,-0.5) {$\bullet$};
\node at (3,0) {$\bullet$};
\node at (2,0) {$\bullet$};
\node at (1,0) {$\bullet$};

\node [above] at (-4,0.5) {$-2$};
\node [above] at (4,0.5) {$-2$};
\node [below] at (-4,-0.5) {$-2$};
\node [below] at (4,-0.5) {$-2$};

\end{tikzpicture}
\end{center}
\caption{Dual graph of the exceptional cycle $F$}\label{quotcusp}
\end{figure}
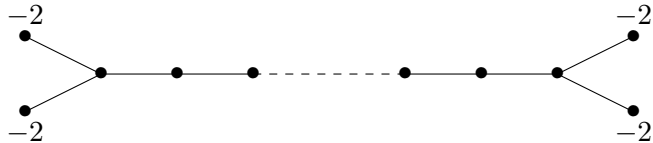

Note that the arrangement of the irreducible components of $F$ and their self intersections only depends on $\sigma$. Moreover, since quotient cusp singularities are taut (see for instance \cite{NW03}), then their isomorphism type is determined by the dual graph of the exceptional locus of the minimal resolution together with the self intersection numbers of its irreducible components. As a consequence, the quotient cusp $\qz$ only depends on the $\zz$-action defined by $\sigma$ on $E$. A priori though, the covering map $\rho:\px\rightarrow\qz$ could depend on the specific involution we consider for the germ of the singularity $\px$. The following proposition shows that in this case the map $\rho$ is the same up to isomorphism, for any choice of involution $\iota$ as long as the action of $\iota$ on $E$ is given by the same reflection, $\sigma$ and we only consider antisymplectic involutions which are free away from the cusp singularity.

\begin{prop}\label{uniqueinv}
Let $\iota$ be any antisymplectic involution defined on a cusp singularity $\px$  which acts as the reflection $\sigma$ on the associated exceptional cycle $E$ and is fixed point free away from $p$. Then the quotient map $\rho:\px\rightarrow\qz$ associated to $\iota$ coincides, up to isomorphism, with the index one covering map (as described for example in \cite{KM98}, Section 5.2, definition 5.19) of the quotient singularity $\qz$. That is, given two involutions with the properties described above, there exists an isomorphism $\vartheta:\px\rightarrow\px$ that makes the following diagram commute:

$$\xymatrix{ \px  \ar[r]^{\vartheta} \ar[d]_{\iota_1} & \px \ar[d]^{\iota_2}\\ \px \ar[r]_{\vartheta} & \px}$$
\end{prop}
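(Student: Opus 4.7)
The plan is to identify $\rho:\px\rightarrow\qz$ with the index one cover of the quotient cusp singularity $\qz$, and then deduce the commutative square from the standard uniqueness of the index one cover (\cite{KM98}, 5.19). The point is that the index one cover is intrinsic to $\qz$, which by the discussion preceding the proposition depends only on $\sigma$, so both $\rho_1$ and $\rho_2$ target the same germ.

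First I would check that $\rho$ satisfies the defining properties of the index one cover. Cusp singularities are minimally elliptic and hence Gorenstein, so $K_X$ is Cartier and the holomorphic $2$-form $\Omega$ on $X\setminus\{p\}$ extends to a generator of $\omega_X$ on a neighbourhood of $p$. Since $\iota$ is fixed point free on $X\setminus\{p\}$, the map $\rho$ is \'etale outside the codimension $2$ locus $\{q\}\subset Z$, i.e.\ \'etale in codimension $1$. The antisymplectic condition $\iota^*\Omega=-\Omega$ then gives that $\Omega$ does not descend to $Z$, while $\Omega^{\otimes 2}$ does, providing a trivialisation of $\omega_Z^{[2]}$ near $q$ but no trivialisation of $\omega_Z$ itself. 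Hence $\qz$ is $\mathbb{Q}$-Gorenstein of Cartier index exactly $2$, and $\rho$ is a degree $2$ cover, \'etale in codimension $1$, with Gorenstein total space; this is precisely the characterisation of the index one cover. Thus $\rho$ is isomorphic (as a cover of $Z$) to the index one covering map.

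Applying this to two involutions $\iota_1,\iota_2$ as in the statement produces two quotient maps $\rho_1,\rho_2:\px\rightarrow\qz$, both isomorphic to the index one cover of the same germ $\qz$. By uniqueness of the latter there is an isomorphism $\vartheta:\px\rightarrow\px$ with $\rho_2\circ\vartheta=\rho_1$. To finish, I would verify that $\vartheta$ intertwines the two involutions: the identities $\rho_2\circ(\vartheta\circ\iota_1)=\rho_1\circ\iota_1=\rho_1=\rho_2\circ\vartheta$ show that $\vartheta\circ\iota_1\circ\vartheta^{-1}$ is a deck transformation of $\rho_2$, hence either the identity or $\iota_2$; the first case would force $\iota_1=\mathrm{id}$, a contradiction, so $\vartheta\circ\iota_1=\iota_2\circ\vartheta$, which is the claimed commutative diagram.

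The only non-mechanical step is pinning down that the Cartier index of $\qz$ is exactly $2$ (not $1$), which is where the antisymplectic hypothesis is genuinely used; once this is in place, the rest is a direct application of the uniqueness of the index one cover and a short Galois-group argument.
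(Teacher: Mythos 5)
Your proof is correct, but it takes a genuinely different route from the paper's. You identify $\rho$ with the index one cover \emph{intrinsically}: you check that $\rho$ is a connected degree-two cover, \'etale in codimension one, with Gorenstein total space (cusps are minimally elliptic, hence Gorenstein), and that the antisymplectic hypothesis forces $\omega_Z$ to be non-trivial near $q$ while $\omega_Z^{[2]}$ is trivial, so $\qz$ has index exactly two; uniqueness of the index one cover and a short deck-transformation argument then finish the job. The paper instead classifies \emph{all} degree-two covers of the partial resolution $\hat Z$ \'etale over its smooth locus: it computes $\textnormal{Cl}(\hat Z)\cong \z^k\times(\zz\times\zz)$ via the Smith normal form of the intersection matrix of the exceptional curves, matches the three nontrivial $2$-torsion classes with the three index-two subgroups $\langle a^2,b\rangle$, $\langle a^2,ab\rangle$, $\langle a\rangle$ of the infinite dihedral group acting on the tube domain, and observes that only $\langle a\rangle$ yields a cover whose total space is the cusp (the others produce chains rather than cycles of rational curves). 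Your argument is shorter and avoids both computations, at the cost of leaning on the characterisation of the index one cover from \cite{KM98}; for degree two this characterisation can in any case be read off from the torsion--class-group description of such covers that the paper itself sets up, since Gorensteinness of the total space forces the defining $2$-torsion class to be $K_Z$. Two small points: the step ``$\Omega$ does not descend, hence $\omega_Z$ is not trivial'' deserves one more line (an invariant generator would pull back to $f\Omega$ with $f$ nowhere vanishing and $f\circ\iota=-f$, forcing $f(p)=0$, a contradiction on a normal surface), and your closing argument that $\vartheta$ intertwines $\iota_1$ and $\iota_2$ is a step the paper leaves implicit, so your write-up is actually more complete there.
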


\begin{proof}
Let us begin this proof with the observation that the index one covering map for $Z$ described in \cite{KM98}, definition 5.19 has domain the cusp $\px$ and is induced by a covering $\tilde X\rightarrow \hat Z$ which is \'etale on the smooth locus of $\hat Z$.
Now, the \'etale coverings of the smooth locus of $\hat Z$ are in bijective correspondence with the 2-torsion elements of its class group, $\textnormal{Cl}(\hat Z)$. Indeed, if $D\in \textnormal{Cl}(\hat Z)$ and $2D\sim 0$, then $\hat X:= \textnormal{Spec}_{\hat Z}(\mathcal O_{\hat Z}\oplus \mathcal O_{\hat Z}(D))$ with multiplication defined by an isomorphism $\theta: \mathcal O_{\hat Z}(D)^{\otimes 2}\rightarrow \mathcal O_{\hat Z}$ is a double cover of $\hat Z$ \'etale over the smooth locus and every such cover arises this way (cf. \cite{R76}, Cor 2.6). Note that the isomorphism type of the cover does not depend on the choice of the isomorphism $\theta$ because $\pi_1(\hat Z)=1$. Indeed, the map $\theta$ is determined up to a unit $u\in\textnormal{H}^0(\mathcal O_{\hat Z}^*)$: since $\pi_1(\hat Z)$ is trivial, $u$ admits a square root $v\in \textnormal{H}^0(\mathcal O_{\hat Z}^*)$. Then the map $\mathcal O_{\hat Z}\oplus \mathcal O_{\hat Z}(D)\rightarrow \mathcal O_{\hat Z}\oplus \mathcal O_{\hat Z}(D)$ defined by $(a,b)\mapsto (a,v\cdot b)$ induces an isomorphism of the double covers defined by $u\cdot\theta$ and $\theta$. Thus in order to study the map $\tilde \rho$ and therefore the quotient map $\rho$ it is useful to give a description of $\textnormal{Cl}(\hat Z)$. If  $G_{1},\dots,G_{4}$ are the exceptional divisors associated to the resolution of the four $A_1$ singularities of $\hat Z$, then the exact sequence 
$$0 \rightarrow \langle G_1,\dots,G_4\rangle\rightarrow \textnormal{Cl}(\tilde Z)\rightarrow \textnormal{Cl}(\hat Z)\rightarrow 0$$ and the fact that $\tilde Z$ is smooth give the isomorphism
$$\textnormal{Cl}(\hat Z)\cong \frac{\Pic (\tilde Z)}{\langle G_1,\dots,G_4\rangle}$$
Since $\qz$ is a rational singularity, then the Picard group of its minimal resolution is  the free abelian group $H^2(\tilde Z,\z)$ generated by the dual basis to the basis of $H_2(\tilde Z,\z)$ given by classes of the $k+4$ irreducible components of the exceptional locus (note that here $k=n/2+1$). Thus 
$$\textnormal{Cl}(\hat Z)\cong \z^{k+4}/Q\z^{4}$$
where $Q$ is the $(k+4\times 4)$ intersection matrix relative to $G_1,\dots,G_4$. This matrix $Q$ can always be put (permuting the rows) in the form
$$Q=\begin{pmatrix}-2& 0 & 0 & 0 & 1 & 0 & \cdots & 0 \\
						    0& -2 & 0 & 0 & 1 & 0 & \cdots & 0 \\
						    0& 0 & -2 & 0 & 0 & 0 & \cdots & 1 \\
						    0& 0 & 0 & -2 & 0 & 0 & \cdots & 1 \end{pmatrix}^T$$
Note that $Q$ gives the relations $-2G^*_1+F^*_{n/2}=0$, $-2G^*_2+F^*_{n/2}=0$, $-2G^*_3+F^*_{n}=0$, $-2G^*_4+F^*_{n}=0$, therefore $G^*_1-G^*_2$ and $G^*_3-G^*_4$ are elements of order two in $\textnormal{Cl}(\hat Z)$. The Smith normal form of $Q$ is the matrix
$$\begin{pmatrix}1& 0 & 0 & 0 & \cdots & 0 \\
					   0& 1 & 0 & 0 & \cdots & 0 \\
					   0& 0 & 2 & 0 & \cdots & 0 \\
					   0& 0 & 0 & 2 & \cdots & 0  \end{pmatrix}^T$$
which implies that $\textnormal{Cl}(\hat Z)\cong \z^k\times (\zz\times\zz)$, thus the class group of $\hat Z$ contains three non trivial 2-torsion elements corresponding to three possible covers of $\hat Z$ of degree two which are \'etale on the smooth locus, one of them giving the index one covering of $\qz$. 

Now, proposition \ref{cuspact} allows us to view the singularity $\qz$ as the partial compactification of the quotient of the tube domain $U_C$ by the action of the hyperbolic matrix $A$ and a pair $(B,t)$ described explicitly in the proof of the proposition. Thus the singularity $\qz$ is obtained by taking the quotient of $U_C$ by the action of the infinite dihedral group $\mathcal D_\infty$ generated by $a=A,b=(B,t)$.

\begin{equation} U_C \xrightarrow{/\langle a \rangle} \px \xrightarrow{/\langle b \rangle} \qz \label{q3}\end{equation}
The maps above can be understood completely by looking at the corresponding maps on neighborhoods $N,\tilde N$ of the exceptional locus $E$ and its universal covering $\tilde E$, which is a chain of rational curves indexed by $\z$. In particular the matrix $A$ acts on $\tilde E$ by translation giving the cycle of curves $E$ while the pair $(B,t)$ acts as a reflection, giving $\hat F$.

Each subgroup of index 2 of $\mathcal D_\infty$ corresponds to a covering of $\qz$ of degree 2, and therefore to a covering of $\hat Z$ which is \'etale on the smooth locus, and it is easy to see that there are three such subgroups: $H_1=\langle a^2,b\rangle$, $H_2=\langle a^2,ab\rangle$, $H_3=\langle a\rangle$. In particular, by the description in terms of the covering map above any such covering of $\hat Z$ arises in this way. In order to understand these coverings, as already stated above, it suffices to understand how the quotient maps induced by each subgroup $H_i$ act on $\tilde E \subset \tilde N$ and $E\subset N$. Clearly $H_3$ corresponds to the quotient maps given in (\ref{q3}). As for $H_1$, we get the following diagram
$$ \tilde E \xrightarrow{/\langle a^2 \rangle} E_1' \xrightarrow{/\langle b \rangle} E_1 \rightarrow \hat F$$
where $E_1'$ is a cycle of length $2n$, given that $n$ is the length of $E$, and $E_1$ is a chain of $n+1$ rational curves finally mapping to $\hat F$ through the action of $\overline a \in \mathcal D_\infty/H_1$. A very similar description holds true for $H_2$. Thus we see that the only subgroup giving us a covering of $Z$ by the cusp $X$, or alternatively of $\hat Z$ by $\tilde X$ is $H_3$. We deduce that this covering map is the one associated to the index one cover. 
\end{proof}

Given an antisymplectic involution on a cusp singularity that is fixed point free on $X\setminus \{p\}$ one may ask if it induces in a natural way an involution of the same type on the dual cusp. The answer is affirmative.

\begin{theo}\label{dualact}
Let $\px$ be a cusp singularity and let $\iota$ be an antisymplectic involution that is fixed point free on $X\setminus \{p\}$. Then $\iota$ induces an antisymplectic involution on the dual cusp $p'\in X'$ that is fixed point free on $X'\setminus \{p'\}$.
\end{theo}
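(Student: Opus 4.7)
The plan is to construct the desired involution on the dual cusp by lifting the explicit Hirzebruch description of $\iota$ from Proposition~\ref{cuspact}, and then to appeal to Proposition~\ref{uniqueinv} for canonicity. I would start by invoking Proposition~\ref{sym} to write the action of $\iota$ on the exceptional cycle $E$ as a reflection $\sigma$ in the dihedral group of order $2n$, and lifting this in the Hirzebruch presentation to a pair $(B,t)$, where $B\in\GL(N)$ is an involution of determinant $-1$ preserving the cone $C$ and satisfying $BAB=A^{-1}$, and $t\in\frac{1}{2}N/N$ is a 2-torsion element avoiding the obstruction set identified in the proof of Proposition~\ref{cuspact}. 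By Remark~\ref{symmetricdual} the dual cycle $D$ is then also symmetric and carries an induced reflection $\sigma'$ which fixes the components $D_m$ and $D_{m/2}$ dual to $E_n$ and $E_{n/2}$.

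The central construction I would implement is to replace $B$ by $B'=-B$. In dimension two one has $\det(-B)=-1$ and $(-B)^2=I$, the dihedral relation $(-B)A(-B)=BAB=A^{-1}$ persists, and since $B$ swaps the eigenvectors $w_1\leftrightarrow w_2$ of $A$, the matrix $-B$ swaps the extremal rays $w_2,-w_1$ of the dual cone $C'$, so $B'$ preserves $C'$. I would then pick a 2-torsion class $t'\in\frac{1}{2}N/N$ so that the pair $(B',t')$ acts freely on $U_{C'}/\gen{A}$, repeating the argument of Proposition~\ref{cuspact} verbatim on the dual side and using Lemma~\ref{amod2} to ensure that $A(2t')\equiv 2t'\pmod{2N}$; excluding the two classes corresponding to the fixed components $D_m, D_{m/2}$ still leaves at least one valid element in the group $\frac{1}{2}N/N$ of order four.

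The pair $(B',t')$ then defines an antisymplectic involution $\iota'$ on $(p'\in X')$ that is fixed point free away from $p'$ and whose action on $D$ realizes the reflection $\sigma'$. By Proposition~\ref{uniqueinv} any such involution is unique up to isomorphism of the germ, so the construction depends only on $\iota$ and produces the canonical induced involution. The hardest step will be the explicit identification of the two obstruction 2-torsion classes on the dual side: one must translate between the basis $(v_0,u_0)$ adapted to the cone $C$ and the analogous basis adapted to $C'$ via the combinatorial duality of Remark~\ref{dualformula}, and then check that the parity conditions from Definition~\ref{symmdef} guarantee that $\frac{1}{2}N/N$ contains an element outside both obstructions.
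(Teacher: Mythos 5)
Your proof is correct and follows essentially the same route as the paper: the paper's own proof is just the observation that $E$ is symmetric, so by Remark~\ref{symmetricdual} the dual cycle $D$ is symmetric as well, and Proposition~\ref{cuspact} then supplies the required involution on $(p'\in X')$. Your explicit $B'=-B$ construction on the dual cone $C'$ and the appeal to Proposition~\ref{uniqueinv} for canonicity are sound elaborations of what the paper leaves implicit (compare Remark~\ref{toricdualact}).
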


\begin{proof}
Let $E$ be the exceptional divisor associated with $\px$, as usual. Observe that $E$ is symmetric, since it admits an involution. Then the theorem follows from remark \ref{symmetricdual} and proposition \ref{cuspact}.
\end{proof}

Finally, we can describe the relation between the involution on a cusp $\px$ and the one on its dual in light of lemma 7.3 from \cite{GHK15a}.

\begin{remark}\label{toricdualact}
As usual, let $(p\in Y_C)$ be a cusp singularity, $E$ its exceptional cycle. Let $\iota$ be the involution defined on it and $\sigma$ the reflection induced by $\iota$ on $E$. On the other side, let $(p'\in Y_{C'})$ be the dual cusp and $D$ its associated exceptional cycle. Finally let $G$ be the cycle of curves obtained from $E$ contracting all the $(-2)$ curves and let $F$ be the one obtained from $D$ through the same process. Note that this $F$ is exactly the divisor defined in lemma 7.3. Observe that $\sigma$ gives directly (by restriction to the indices that are left after the contraction) an involution $\sigma'$ on $G$ that respects the self intersections of the irreducible components of $G$. Now, because of the way $F,G$ are related to each other (in terms of their dual graphs, the vertices of $\Gamma_G$ are the edges of $\Gamma_F$ and viceversa), $\sigma'$ can be seen as a reflection on $F$ that respects self intersections, once we choose the appropriate labeling for it. Moreover $\sigma'$ extends from $F$ to $D$ thus giving a reflection $\sigma''$ on $D$. Using proposition \ref{cuspact}, $\sigma''$ induces an involution on $(p'\in Y_{C'})$ which is the one given by theorem \ref{dualact}.
\end{remark}

\section{Looijenga pairs}
An important role in the deformation theory of cusp singularities is played by Looijenga pairs. We present some preliminary facts on them following closely Friedman \cite{F15} and Gross, Hacking, Keel \cite{GHK15b}.
\begin{defin}
A \emph{Looijenga pair} $\yd$ is a smooth projective surface $Y$ together with a connected singular nodal divisor $D\in |-K_Y|$ which is either an irreducible rational curve with a single node or a cycle of smooth rational curves, $D=\sum_{i=1}^n D_i$, where each $D_i$ meets $D_{i+1}$ transversally, with $i$ understood mod $n$.
\end{defin}

We will also refer to pairs $\yd$ as \textit{anticanonical pairs}. The integer $n$ is called the length of $D$, if the components of $D$ are indexed as above, we refer to $\yd$ as a \textit{labeled Looijenga pair} and to the sequence of self intersections $(-D_1^2,-D_2^2,\dots,-D_n^2)$ as the \textit{cycle of integers} associated to it. To fix the notation, we will always label the components of $D$ starting from the top-right one, for instance, for $n=6$ we have:
\begin{center}
\begin{tikzpicture}[out=20,in=160,relative]
\draw (-1,0) to (1,0) to (1,1) to (1,2) to (-1,2) to (-1,1) to  (-1,0);
\node [above] at (0,2) {$D_6$};
\node [right] at (1.2,1.5) {$D_1$};
\node [right] at (1.2,0.5) {$D_2$};
\node [below] at (0,0) {$D_3$};
\node [left] at (-1.2,0.5) {$D_4$};
\node [left] at (-1.2,1.5) {$D_5$};
\node at (0,1) {$D$};
\end{tikzpicture}
\end{center}
Note, as always, that all the pictures that will appear in this work are merely sketches: all components of $D$ should be understood as meeting transversally. An orientation of $D$ is an orientation of its dual graph, or equivalently the choice of a generator of $H_1(D,\z)\cong\z$. Observe that for $n\geq3$ an orientation determines a natural labeling of the components of $D$ up to cyclic permutation and viceversa a labeling induces an orientation on $D$.
\begin{lem}[Lemma 2.1, \cite{GHK15b}]\label{iso-orient}
Let $D$ be a cycle of $n$ rational curves, with a choice of orientation. This orientation induces an identification $\Pic^0(D)\cong \gm$, where $\Pic^0(D)$ is the group of numerically trivial line bundles.
\end{lem}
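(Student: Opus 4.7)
The plan is to read off the group $\Pic^0(D)$ from the normalization exact sequence. Let $\nu : \tilde{D} \to D$ be the normalization, so $\tilde{D}$ is a disjoint union of $n$ copies of $\pl$ when $n \geq 2$ (and $\tilde{D} = \pl$ with two distinguished preimages of the node when $n = 1$). On $D$ we have the short exact sequence of sheaves
$$1 \to \mathcal{O}_D^* \to \nu_*\mathcal{O}_{\tilde D}^* \to \bigoplus_{p\ \mathrm{node}} \C_p^* \to 1,$$
where at each node $p$ the quotient $\C_p^*$ is identified with the ratio of the two branch evaluations. The choice of orientation is precisely what is needed to decide, consistently over all nodes, which branch plays the role of the numerator and which the denominator.

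Next, I would take the associated long exact sequence in cohomology. Using $H^0(\mathcal{O}_{\pl}^*) = \C^*$ and $H^1(\mathcal{O}_{\pl}^*) = \z$ on each component of $\tilde D$, this yields
$$1 \to \C^* \to (\C^*)^n \xrightarrow{\phi} (\C^*)^n \to \Pic(D) \to \z^n \to 0,$$
where the rightmost map is the multidegree $L \mapsto (\deg L|_{D_i})_i$. Since $\Pic^0(D)$ is by definition the kernel of the multidegree, it is identified with the cokernel of $\phi$. Adopting the labelling in which the node $p_i$ separates $D_i$ and $D_{i+1}$ (indices mod $n$) compatibly with the orientation, I would then check that $\phi$ sends $(\lambda_1,\dots,\lambda_n)$ to $(\lambda_{i+1}/\lambda_i)_i$, and that its image equals the kernel of the multiplication map $m : (\C^*)^n \to \C^* = \gm$, $(\mu_i)\mapsto \prod_i \mu_i$: the ratios telescope to $1$, and conversely any tuple of product $1$ lifts back by successively solving for the $\lambda_i$.

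The desired isomorphism $\Pic^0(D) \cong \gm$ is then the map induced by $m$ on the cokernel of $\phi$. The construction depends on the orientation only through the cyclic direction in which we index the components: reversing the orientation replaces each $\mu_i$ by $\mu_i^{-1}$ and composes the identification with inversion on $\gm$, which is exactly why an unoriented cycle only determines $\Pic^0(D)\cong\gm$ up to inversion. The case $n=1$ fits into the same diagram with $(\C^*)^n = \C^*$ and $\phi$ the trivial map $\lambda \mapsto \lambda/\lambda = 1$, so no separate argument is required. There is no serious obstacle here; the only point worth flagging is that the orientation is doing exactly the bookkeeping needed to pin down a canonical identification rather than one defined only up to $\pm 1$.
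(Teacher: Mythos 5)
Your proof is correct, and it lands on exactly the same isomorphism as the paper, but it reaches it by a genuinely different and more structural route. The paper's argument is a bare-hands construction: choose a nowhere-vanishing section $s_i$ of $L|_{D_i}$ on each component and send $L$ to $\prod_i s_{i+1}(p_{i,i+1})/s_i(p_{i,i+1})$, with the claims that this is independent of the choices and bijective left as assertions (and only the case $n\geq 3$ is written out). You instead derive the identification from the long exact cohomology sequence of the normalization sequence $1 \to \mathcal{O}_D^* \to \nu_*\mathcal{O}_{\tilde D}^* \to \bigoplus_p \C_p^* \to 1$, realizing $\Pic^0(D)$ as the cokernel of $\phi(\lambda_1,\dots,\lambda_n)=(\lambda_{i+1}/\lambda_i)_i$ and then identifying that cokernel with $\gm$ via the product map. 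The two constructions agree on the nose: your $s_i$ are trivializations of $\nu^*L$, the node ratios are the gluing cocycle in $H^0(\bigoplus_p\C_p^*)$, and the induced map on the cokernel is the paper's $\lambda$. What your version buys is that well-definedness, injectivity, and surjectivity all fall out of exactness rather than needing separate verification, the dependence on the orientation (inversion on $\gm$ under reversal) is transparent, and the degenerate cases $n=1,2$ are covered uniformly. The only point to be slightly careful about, which you implicitly handle, is that for $n=2$ the two nodes both join $D_1$ and $D_2$, so the orientation is genuinely needed to fix the numerator/denominator convention separately at each of them.
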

\begin{proof}[Construction of the isomorphism]
Let us describe how the isomorphism is obtained if $n\geq 3$. For $L\in\Pic^0(D)$ choose a nowhere-vanishing section $s_i\in \Gamma(L|_{D_i})$ for all $i$. Then define the map $\lambda$ as
$$\lambda:\Pic^0(D)\rightarrow \gm \qquad L\mapsto \prod_i \frac{s_{i+1}(p_{i,i+1})}{s_i(p_{i,i+1})}$$
where $p_{i,i+1}=D_i\cap D_{i+1}$. We have that $\lambda$ does not depend on the choice of the sections $s_i$ and it is an isomorphism.
\end{proof}
\begin{defin}\label{autyd}
An \textit{isomorphism} of labeled Looijenga pairs $(Y,D)$ and $(Y',D')$ is an isomorphism $f:Y\rightarrow Y'$ such that $f(D_i)=D'_i$ for each $i=1,\dots,n$ which is compatible with the orientation of $D$ and $D'$. Let $\aut \yd$ be the group of automorphisms of a labeled Looijenga pairs.
\end{defin}

If the intersection matrix $(D_i\cdot D_j)$ is negative definite, we call $\yd$ a \textit{negative definite Looijenga pair} and say that $D$ is negative definite. A useful invariant of anticanonical pairs is their charge.
\begin{defin}[\cite{F15}, Definition 1.1]
The \textit{charge} $Q\yd$ of a Looijenga pair is defined as
$$Q\yd=12-D^2-n$$
\end{defin}
To give a glimpse of the cohomology theory of anticanonical pairs, let $\Lambda\yd\subset H^2(Y,\z)$ be the orthogonal complement of the lattice spanned by the classes of the $D_i$. Then $\Lambda\yd$ is free (\cite{F15}, Lemma 1.5) and, if $D$ is negative definite (which implies that the classes $D_i$ are independent in cohomology), its rank is equal to the charge minus two (\cite{F15}, Lemma 1.5). We also note that in the case $D$ is negative definite, then $Q\yd\geq 3$ (\cite{F15}, Corollary 1.3).\\
Always with the aim of fixing our notation let us give the following definitions.
\begin{defin}
Let $\yd$ be a Looijenga pair. A curve $C$ in $Y$ is an \textit{interior curve} if none of its irreducible components is contained in $D$. An \textit{internal (-2)-curve} instead is a smooth rational curve of self intersection -2 that is disjoint from $D$. We say that $\yd$ is \textit{generic} if it has no internal (-2)-curves.
\end{defin}
Define a \textit{simple toric blowup} to be the blowup of a Looijenga pair $\yd$ at a node of $D$ and an \textit{interior blowup} to be a blowup of $Y$ at a smooth point on $D$. For a toric blowup $\widetilde{Y}\rightarrow Y$ define $\widetilde{D}=\sum_i \widetilde{D}_i+E$, where $\widetilde{D}_i$ is the strict transform of $D_i$ and $E$ is the exceptional divisor, while for an interior blowup set $\widetilde{D}=\sum_i \widetilde{D}_i$, where $\widetilde{D}_i$ is the strict transform of $D_i$. Then in both cases $(\widetilde{Y},\widetilde{D})$ is still a Looijenga pair. Interior blowups increase the charge $Q\yd$ by one, while corner blowups do not change it (\cite{FM83}, Lemmas 3.3 and 3.4). Finally we observe that the charge of a Looijenga pair $\yd$ has a topological interpretation: let $U=Y\setminus D$, then $e(U)=Q(Y,D)$ where $e(U)$ is the Euler number of $U$ (\cite{F15}, Lemma 1.2).

\begin{defin}
An \textit{antisymplectic involution} of a Looijenga pair $\yd$ is an involution $j:\yd\rightarrow \yd$ that reverses the orientation of $D$.
\end{defin}

\begin{remark}\label{inv}
We will be mainly interested in involutions of Looijenga pairs which are are antisymplectic and are fixed point free on $Y\setminus D$. Observe that if $j$ is such an involution then $j(D_i)=D_{\sigma(i)}$ where $\sigma$ is a reflection in the dihedral group of order $2n$.
\end{remark}

Now, recall that $\Lambda$ is defined to be the orthogonal complement of $D_1,\dots,D_n$ in $\Pic(Y)$. 
\begin{defin}
The canonical map
$$\phi_Y: \Lambda \longrightarrow\Pic^0(D)\cong\gm \qquad L\mapsto L|_D$$
determined by restriction of line bundles is called the \textit{period point} of $Y$
\end{defin}
Note that the period map $\phi_Y$ may depend on the labeling of $D$, since it is used in lemma \ref{iso-orient} for the construction of the isomorphism. Finally, note that there exists a version of Torelli's theorem for Looijenga pairs.
\begin{theo}[see \cite{GHK15b} or \cite{F15}]\label{torelli}
Let $\yd$, $(Y',D')$ be labeled Looijenga pairs with $D,D'$ of the same length and let $$\mu: \textnormal{H}^2(Y,\z)\rightarrow \textnormal{H}^2(Y',\z)$$
be an isomorphism of lattices. Then $\mu=f^*$ for an isomorphism $f$ of labeled Looijenga pairs compatible with the orientations if and only if the following hold:
\begin{itemize}
\item [a.] $\mu([D_i])=[D'_i]$ for all $i$.
\item [b.] $\mu(\Nef \ Y)=\Nef \ Y'$.
\item [c.] $\phi_{Y'}(\mu(q))=\phi_Y(q)$ for all $q\in\Lambda$.
\end{itemize}
Moreover if $f$ and $f'$ are two such isomoprhisms, then there exists a $\phi\in K(Y,D):=\textnormal{ker}(\textnormal{Aut}(Y,D)\rightarrow\textnormal{Aut}(\textnormal{Pic}\  Y))$ such that $f'=\phi\circ f$. Conversely, if $\phi\in K(Y,D)$, then $f'=\phi\circ f$ is an isomorphism from $Y$ to $Y'$ such that $(f')^*=\mu$.
\end{theo}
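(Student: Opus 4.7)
The necessity direction is essentially a formality: if $\mu=f^*$ for a labeled isomorphism $f$, then (a) holds because $f$ preserves the labeling of the boundary; (b) holds because nefness is a numerical notion preserved by any lattice isomorphism induced by an isomorphism of surfaces; and (c) holds because the period point is defined via restriction of line bundles to $D$ and the identification $\Pic^0(D)\cong\gm$ from Lemma \ref{iso-orient} depends only on the oriented combinatorics of $D$, which $f$ respects.

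For sufficiency, my plan is to argue by induction on the charge $Q\yd$, which equals $Q(Y',D')$ since condition (a) together with $K_Y=-\sum D_i$ forces $D^2=(D')^2$. The key geometric input is that whenever $Q\yd$ exceeds the charge of the underlying toric model, $Y$ carries a $(-1)$-curve meeting $D$ transversally at a single smooth point, and contracting it yields a Looijenga pair of charge $Q\yd-1$; iterating terminates at a toric pair, where $\Lambda=0$. In the base case condition (c) is vacuous; from (a) the self-intersections $(-D_i^2)$ match for $D$ and $D'$, so the fans of the two toric pairs coincide combinatorially and there is a toric isomorphism between them. Composing with a suitable torus automorphism (uniquely determined since $\Lambda=0$ forces $\mu$ to be trivial on the lattice orthogonal to the boundary) yields an $f$ realizing $\mu$, and condition (b) ensures no further correction to the ample cone is needed.

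For the inductive step, I would fix a $(-1)$-curve $E$ on $Y$ meeting some $D_i$ transversally at one smooth point and produce a matching $(-1)$-curve $E'$ on $Y'$ with $\mu([E])=[E']$. The class $\mu([E])$ has square $-1$, pairs with $K_{Y'}=-\sum D'_j$ to $-1$, and meets each $D'_j$ correctly by (a); Riemann--Roch then gives effectivity, while condition (b), reinterpreted on the Mori cone $\overline{\NE}(Y')$, forces the class to lie on an extremal ray. This pins $\mu([E])$ down as the class of an irreducible $(-1)$-curve $E'$ meeting $D'_i$ transversally in one smooth point. Contracting $E$ and $E'$ produces $\oyd$ and $(\bar Y',\bar D')$ of strictly smaller charge, together with an induced isomorphism $\bar\mu$ on the orthogonal complements of $[E],[E']$. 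The crux is to verify that $\bar\mu$ still satisfies (a)--(c): the boundary and nef conditions descend by standard blowdown arguments, while for (c) one must compare periods before and after blowdown. Concretely, for $L\in\Lambda_{\bar Y}$ with pullback $\tilde L$ to $Y$, the values $\phi_Y(\tilde L)$ and $\phi_{\bar Y}(L)$ differ by a correction factor controlled by $\tilde L\cdot E$ and the position of the point $E\cap D_i$ on $D_i$; the analogous correction on $Y'$ agrees because $\mu([E])=[E']$ and $\mu$ preserves intersection numbers. Induction then gives $\bar f:\oyd\to(\bar Y',\bar D')$ realizing $\bar\mu$, and blowing up the two corresponding points (which automatically match because $\bar f$ is compatible with the period map) produces the required $f$.

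The uniqueness statement is then formal: if $f,f'$ both induce $\mu$, then $f\circ(f')^{-1}\in\aut(Y',D')$ acts trivially on $\Pic(Y')$ and hence lies in $K(Y',D')$; conversely any $\phi\in K\yd$ acts trivially on cohomology, so $(\phi\circ f)^*=\mu$. The main obstacle I anticipate is the inductive step, and specifically: (i) showing $\mu([E])$ is represented by an \emph{irreducible} $(-1)$-curve rather than just a reducible effective class, which requires using extremality and the boundary-intersection data in tandem; and (ii) the bookkeeping for the period map under contraction, where the choices of trivializing sections $s_i$ on the components of $D$ must be updated consistently on both sides of $\mu$. Short cycles ($n=1,2$) and the special case where the only available $(-1)$-curve meets a node of $D$ rather than a smooth point will require minor separate arguments, handled by first performing a toric blowup to reduce to the generic situation.
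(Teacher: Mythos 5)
First, a point of comparison: the paper does not prove Theorem \ref{torelli} at all --- it is imported from \cite{GHK15b} and \cite{F15} --- so there is no internal proof to measure you against. On its own terms, your necessity and uniqueness arguments are correct and essentially formal, and the skeleton of your sufficiency argument (identify $\mu([E])$ with an irreducible $(-1)$-curve on $Y'$ using extremality in $\overline{\NE}(Y')$ supplied by condition (b) together with $\mu([E])^2=K_{Y'}\cdot\mu([E])=-1$, contract matching interior exceptional curves down to a toric model, and settle the toric case by matching fans) is genuinely the strategy used in \cite{GHK15b} to prove injectivity of the period map.

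The genuine gap is in the lifting step of your induction, and it is not bookkeeping but the heart of the theorem. The parenthetical ``the two corresponding points automatically match because $\bar f$ is compatible with the period map'' is false as stated: the inductive hypothesis only constrains $\bar f$ by $\phi_{\bar Y}$ on $\Lambda_{\bar Y}$, and the image $\pi^*\Lambda_{\bar Y}\subset\Lambda_Y$ carries no information whatsoever about the blown-up point $p=E\cap D_i$. Your ``correction factor'' for pulled-back classes is also wrong: $\phi_Y(\pi^*L)=\phi_{\bar Y}(L)$ on the nose, since $\pi$ restricts to an isomorphism $D\to\bar D$; the position of $p$ is recorded only by the classes of $\Lambda_Y$ \emph{not} pulled back from $\bar Y$, namely those of the form $\pi^*M+k[E]$ with $M\cdot\bar D_i=k$. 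To conclude $\bar f(p)=p'$ you must therefore invoke condition (c) for $\mu$ on exactly these extra classes, and you must also contend with the ambiguity of $\bar f$ up to $K(\bar Y,\bar D)$: for most intermediate pairs this kernel is trivial, so $\bar f$ is unique and $\bar f(p)=p'$ is an honest assertion requiring proof, whereas at the toric bottom the kernel is the big torus and must be used to normalize \emph{all} blown-up points simultaneously. This is precisely why \cite{GHK15b} contracts an entire collection of disjoint exceptional curves to the toric model in one step (after corner blowups, whose necessity you underestimate: an interior $(-1)$-curve meeting $D$ transversally in one smooth point need not exist before corner blowups, and not only for $n\le 2$) rather than peeling off one curve at a time. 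As structured, your inductive hypothesis is too weak to carry out the lift.
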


A direct consequence of this theorem is the next result that we will need later in chapter 3 for the proof of proposition \ref{ellpairs}.
\begin{theo}\label{torelliinv}
Let $\yd$ be a Looijenga pair and let 
$$\theta:  \textnormal{H}^2(Y,\z)\rightarrow  \textnormal{H}^2(Y,\z)$$
be an involution of lattices. Then $\theta=f^*$ for an automorphism $f$ of the Looijenga pair $\yd$ if and only if the following hold:
\begin{itemize}
\item [i.] $\theta([D_i])=[D_{\sigma(i)})]$ for all $i$.
\item [ii.] $\theta(\Nef \ Y)=\Nef \ Y$.
\item [iii.] $\phi_Y(\theta(q))=\phi_Y(q)^{-1}$ for all $q\in\Lambda$.
\end{itemize}
Moreover $f$ is an involution if $K(Y,D)$ is trivial.
\end{theo}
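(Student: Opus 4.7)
The plan is to deduce this theorem as an equivariant corollary of Torelli's theorem \ref{torelli}, applied to the original pair $(Y,D)$ and a ``twisted'' copy of itself with reversed orientation. Concretely, introduce a second labeled Looijenga pair $(Y',D')$ by setting $Y'=Y$ and $D'_i:=D_{\sigma(i)}$. Since condition (iii) implies that $\theta$ inverts elements of $\Pic^0(D)\cong\gm$, the permutation $\sigma$ must be a reflection in the dihedral group on the cycle $D$, so the relabeling $D'_i=D_{\sigma(i)}$ endows $D'$ with the cyclic orientation \emph{opposite} to the original one on $D$.

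Next I would verify the three hypotheses of theorem \ref{torelli} with $\mu=\theta$. Condition (a), $\mu([D_i])=[D'_i]$, is exactly the given hypothesis (i). Condition (b), $\mu(\Nef Y)=\Nef Y'$, is hypothesis (ii) because $Y=Y'$ as varieties. For condition (c) the key observation is how the period point transforms under reversal of orientation: using the explicit formula
\[
\lambda(L)=\prod_i \frac{s_{i+1}(p_{i,i+1})}{s_i(p_{i,i+1})}
\]
from lemma \ref{iso-orient}, reversing the cyclic order of the indices inverts each factor, so $\phi_{Y'}(L)=\phi_Y(L)^{-1}$ for every $L\in\Lambda$. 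Combined with hypothesis (iii) this gives
\[
\phi_{Y'}(\theta(q))=\phi_Y(\theta(q))^{-1}=\phi_Y(q),
\]
which is precisely condition (c) of Torelli. Applying theorem \ref{torelli} then produces an isomorphism $f:(Y,D)\to(Y',D')$ of labeled Looijenga pairs compatible with orientations, with $f^*=\theta$. Forgetting the labeling, $f$ is a self-automorphism of $Y$ sending $D_i$ to $D_{\sigma(i)}$ and inducing $\theta$ on $H^2(Y,\z)$.

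For the moreover statement, since $\theta$ is an involution, $(f^2)^*=\theta^2=\mathrm{id}_{H^2(Y,\z)}$; and because $\sigma^2=\mathrm{id}$, $f^2$ preserves the labeling of $D$, so $f^2\in\aut(Y,D)$. Therefore $f^2\in K(Y,D)$, and if $K(Y,D)$ is trivial then $f^2=\mathrm{id}$, i.e.\ $f$ is an involution.

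The step I expect to require the most care is the period-point computation, namely checking that reversing the orientation of $D$ inverts the isomorphism $\Pic^0(D)\cong\gm$ of lemma \ref{iso-orient}; everything else is a straightforward bookkeeping matter of matching up data to feed into the non-equivariant Torelli theorem. Note also that we never need to assume \emph{a priori} that $\sigma$ is a reflection: the inversion in hypothesis (iii) forces this, which is what allows the opposite-orientation labeling $D'_i=D_{\sigma(i)}$ to be compatible with the orientations as demanded by Torelli.
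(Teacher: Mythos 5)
Your proposal is correct and takes essentially the same route as the paper: define the relabeled pair $(Y', D')$ with $Y'=Y$ and $D'_i=D_{\sigma(i)}$, check that hypotheses (i)--(iii) translate into conditions (a)--(c) of theorem \ref{torelli} via the computation that reversing the orientation inverts the isomorphism of lemma \ref{iso-orient}, and deduce the involution property of $f$ from the triviality of $K(Y,D)$ applied to $f^2$. The only quibble is your closing remark that hypothesis (iii) \emph{forces} $\sigma$ to be a reflection: the paper simply assumes this (cf.\ remark \ref{inv}), and the inversion condition alone would not distinguish a reflection from a rotation in the degenerate case where $\phi_Y$ takes only $2$-torsion values on $\Lambda$.
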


\begin{proof}
We want to reduce ourselves to theorem \ref{torelli}. Let $Y'$ be equal to $Y$ and define $D'$ by $D'_i:=D_{\sigma(i)}$. Then $(Y',D')$ is a labeled Looijenga pair. The map $\mu:\textnormal{H}^2(Y,\z)\rightarrow \textnormal{H}^2(Y',\z)$ given by $\mu(L):=\theta(L)$ gives an isomorphism of lattices such that $\mu([D_i])=\theta([D_i])=D_{\sigma(i)}=D'_i$, therefore condition (a) is equivalent to (i). Moreover the condition $\theta(\Nef \ Y)=\Nef \ Y$ directly implies that $\mu(\Nef \ Y)=\Nef \ Y'$, giving that (b) is equivalent to (ii). Finally consider $\phi_{Y'}$. Using lemma \ref{iso-orient}, we get that
$$\phi_{Y}:\Lambda \rightarrow \Pic^0(D)\rightarrow\gm, \qquad L\mapsto  \prod_i \frac{s_{i+1}(p_{i,i+1})}{s_i(p_{i,i+1})}$$
where the $s_i$'s are sections of $L|_{D_i}$ for each $i$. We can use the same sections to define $\phi_{Y'}$: indeed, define $s'_i$ to be $s_{\sigma(i)}$. Then
$$\phi_{Y'}:\Lambda' \rightarrow \Pic^0(D')\rightarrow\gm, \qquad L\mapsto  \prod_i \frac{s'_{i+1}(p_{i,i+1})}{s'_i(p_{i,i+1})}$$
Now, if $\sigma(i)=j$, for some $j$, then $\sigma(i+1)$ has to be equal to $j-1$, because we assumed that $\sigma$ is a reflection, thus it changes the orientation of $D$. This gives us
$$\frac{s'_{i+1}(p_{i,i+1})}{s'_i(p_{i,i+1})}=\left(\frac{s_j(p_{j-1,j})}{s_{j-1}(p_{j-1,j})}\right)^{-1}$$
therefore $\phi_{Y'}(q)=\phi_Y(q)^{-1}$ for all $q\in\Lambda=\Lambda'$. As a consequence, $\phi_{Y'}(\mu(q))=\phi_Y(\theta(q)^{-1}=\phi(q)$ and condition (c) is equivalent to (iii). Hence we can apply theorem \ref{torelli} and we get that $\theta=f^*$ for an isomorphim $f:(Y',D')\rightarrow \yd$ of Looijenga pairs if and only if conditions (i), (ii) and (iii) hold. By construction of $(Y',D')$ this isomorphism $f$ can in fact be viewed as an automorphism of the labeled Looijenga pair $\yd$ which reverses the orientation of $D$. Finally consider $f^2:\yd\rightarrow\yd$. It is an automorphism of $\yd$ mapping each $D_i$ to itself. Therefore, again by theorem \ref{torelli}, if $K(Y,D)$ is trivial then $f^2$ has to be equal to the identity map and $f$ is an involution of $\yd$.
\end{proof}

\begin{prop}\label{modulispaceinv}
Suppose $\theta$ is an isometry of $\Pic Y$ and $\sigma$ is a reflection in the dihedral group of order $2n$ such that $\theta([D_i])=[D_{\sigma(i)})]$ for all $i$ and $\theta(\textnormal{Nef}\ Y)=\textnormal{Nef} \ Y$. Let $S$ be the locus in the moduli space
$$T=\Hom(\Lambda,\C^*)$$
of (marked) pairs $(Y',D')$ deformation equivalent to $\yd$ for which there exists an isomorphim $j$ with $\theta=j^*$ and $j(D'_i)=D,_{\sigma(i)}$. Then we have
$$S\cap T^{\textnormal{gen}}=\{\phi\in T^{\textnormal{gen}} \ | \ \phi\circ\theta(q)=\phi(q)^{-1} \textnormal{ for all }q\in \Lambda\}$$
where
$$ T^{\textnormal{gen}}=T\setminus \bigcup_{\alpha \in \Phi} \{\chi\in T \ | \ \chi(\alpha)=1\}$$
and $\Phi$ is the set of roots in $\Pic Y$
\end{prop}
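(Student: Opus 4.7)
The plan is to deduce this proposition from Theorem \ref{torelliinv} by interpreting a character $\phi \in T$ as the period point of a marked Looijenga pair $(Y',D')$ deformation equivalent to $\yd$, and then verifying, on $T^{\textnormal{gen}}$, each of the three hypotheses of Theorem \ref{torelliinv} in turn. Throughout, the marking identifies $\Pic(Y')$ with $\Pic(Y)$ in a way that sends $[D'_i]$ to $[D_i]$, so the isometry $\theta$ and the reflection $\sigma$ transport automatically from $Y$ to $Y'$.

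For the inclusion $\subseteq$, suppose $\phi \in S \cap T^{\textnormal{gen}}$ and let $(Y',D')$ be the corresponding marked pair, with an automorphism $j$ satisfying $j^* = \theta$ and $j(D'_i) = D'_{\sigma(i)}$. Applying Theorem \ref{torelliinv} to $(Y',D')$ and $f = j$, condition (iii) reads precisely $\phi_{Y'}(\theta(q)) = \phi_{Y'}(q)^{-1}$, i.e. $\phi \circ \theta(q) = \phi(q)^{-1}$ for every $q \in \Lambda$, which is the required membership in the right hand side.

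For the reverse inclusion, fix $\phi \in T^{\textnormal{gen}}$ with $\phi \circ \theta = \phi^{-1}$ on $\Lambda$, and let $(Y',D')$ be the associated marked pair. I want to apply Theorem \ref{torelliinv} to produce the automorphism $j$. Condition (i) holds by hypothesis on $\theta$, and condition (iii) is exactly the assumption on $\phi$. The only nontrivial point is condition (ii), $\theta(\Nef\, Y') = \Nef\, Y'$. Here is where the restriction to $T^{\textnormal{gen}}$ enters: by the definition of $\Phi$ and of $T^{\textnormal{gen}}$, no root $\alpha \in \Phi$ satisfies $\phi(\alpha) = 1$, which is equivalent (via the period map) to saying that no class in $\Phi$ is represented by an internal $(-2)$-curve of $Y'$. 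Consequently $(Y',D')$ has no internal $(-2)$-curves, so the walls of $\Nef\, Y'$ are cut out only by the classes $[D'_i]$ and by interior $(-1)$-curves whose classes are determined by the lattice data common to all marked pairs in the deformation family. Hence the nef cone is locally constant on $T^{\textnormal{gen}}$, and coincides, under the marking, with $\Nef\, Y$, which is $\theta$-invariant by assumption.

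Granting condition (ii), Theorem \ref{torelliinv} produces an isomorphism $f : (Y',D') \to (Y',D')$ with $f^* = \theta$ and $f(D'_i) = D'_{\sigma(i)}$; this $f$ is the desired $j$, and so $\phi \in S$. The main obstacle in this argument is the invariance of the nef cone on $T^{\textnormal{gen}}$: one must argue that, in the deformation family parametrized by $T$, the only way the nef cone can change is by the appearance of new $(-2)$-classes becoming effective, a locus which is exactly the complement $T \setminus T^{\textnormal{gen}}$ excised in the statement. Once this is in hand, the rest of the proof is just a transcription of Theorem \ref{torelliinv}.
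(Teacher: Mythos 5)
Your proposal is correct and follows essentially the same route as the paper, whose entire proof is the one-line citation ``It follows from theorem \ref{torelliinv} and the structure of the moduli space of marked Looijenga pairs described in \cite{GHK15b}.'' Your write-up simply makes explicit the two ingredients that citation compresses: applying Theorem \ref{torelliinv} in both directions, and using the genericity condition defining $T^{\textnormal{gen}}$ (no effective roots, hence no internal $(-2)$-curves) to guarantee the nef-cone hypothesis, exactly as the structure theory of \cite{GHK15b} provides.
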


\begin{proof}
It follows from theorem \ref{torelliinv} and the structure of the moduli space of marked Looijenga pairs described in \cite{GHK15b}. 
\end{proof}

\subsection{Equivariant minimal model program for pairs $\yd$}
In this section, similarly to what we did for cusp singularities, we will characterize Looijenga pairs that admit an antisymplectic involution which is free on $Y\setminus D$. For this section we assume that the length $n$ of $D$ is greater than or equal to 4 and that $D$ does not contain any curves with self intersection $-1$. Consider a negative definite Looijenga pair and assume it admits an antisymplectic involution (as a Looijenga pair) $j$. Let $(Z,F)$ be the quotient induced by the action: we know that $Z$ contains four singularities of type $A_1$ lying in pairs on two of the irreducible components of $F$. Moreover note that we have $K_Z+F=0$ in $\textnormal{Cl}(Z)\otimes \mathbb Q$. Let us study minimal models for $(Z,F)$.
\begin{theo}\label{mmp}
Let $(Z,F)$ be as above. Then there exists a sequence of contractions $(Z,F)\rightarrow (Z_1,F_1)\rightarrow\cdots\rightarrow (Z_m,F_m)=(Z',F')$ such that  $(Z',F')$ satisfies:
\begin{itemize}
\item [i.] $(Z',F')$ is a minimally ruled surface with four singularities of type $A_1$.
\item [ii.] These singularities lie in pairs on two distinct fibres and on 2 distinct sections of self intersection equal to 0.
\item [iii.] $F'$ consists of 3 rational curves of self intersection 0, one of which is a fibre of the ruling, while the other two are sections and contain the four $A_1$ singularities.
\end{itemize}
\end{theo}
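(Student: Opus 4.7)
The plan is to run an equivariant minimal model program upstairs on $\yd$ and to descend each step to the quotient $(Z,F)$. Since $\yd$ is negative definite and no component of $D$ is a $(-1)$-curve, $Y$ admits an interior $(-1)$-curve $E\not\subset D$, and adjunction gives $E\cdot D=1$. First I would check that $j(E)\neq E$: if $j(E)=E$ then $j|_E$ is a non-trivial involution of $\pl$ with two fixed points, and these must lie on $D$ because $j$ is free on $Y\setminus D$; but $E\cdot D=1$ provides at most one such point. Next I would verify that $E\cap j(E)=\emptyset$. Any intersection point must be a fixed point of $j$, hence in $D$, and the constraint $E\cdot D=1$ then forces $E\cap j(E)$ to consist of a single fixed point $p\in D$. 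In this exceptional case a preliminary equivariant blow-up adjacent to $p$ will separate the strict transforms of $E$ and $j(E)$, after which the disjoint pair $\{E,j(E)\}$ can be contracted equivariantly.

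Because $E\sqcup j(E)$ lies away from the fixed locus of $j$, this contraction descends via $\rho$ to the contraction on $Z$ of a single smooth $(-1)$-curve $\bar E=\rho(E)$, with $\bar E^2=\tfrac12(E+j(E))^2=-1$; the resulting $(Z_1,F_1)$ inherits the four $A_1$ singularities of $Z$, still located on the same two distinguished components of $F_1$, since the contraction takes place away from them. Each step lowers $\textnormal{rk}\,\Pic Y$ by two, so the procedure terminates at a pair $(Y',D')$ with an antisymplectic involution $j'$, free away from $D'$, and carrying no further interior $(-1)$-curves. By the classification of minimal rational surfaces and the length hypothesis $n(D')\geq 4$, $Y'$ must be a Hirzebruch surface $\mathbb{F}_n$; the unique $(-n)$-section of $\mathbb{F}_n$ with $n\geq 1$ is preserved by every automorphism and cannot be swapped with the opposite $(+n)$-section, which is incompatible with $j'$ reversing the orientation of the $4$-cycle $D'$. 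Hence $n=0$ and $(Y',D')\cong(\pl\times\pl,\Delta)$, and a short check identifies the involution, up to a toric change of coordinates, with $(z,w)\mapsto (1/z,-w)$.

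Descending to the quotient and inspecting $(\pl\times\pl,\Delta)/\langle j'\rangle$ then yields (i)--(iii) by direct calculation: the ruling $(z,w)\mapsto z$ descends to a $\pl$-bundle structure on $Z'$ whose only special fibres are the images of the $j'$-invariant rulings $\{z=\pm 1\}$, each of them containing two $A_1$ singularities; the swapped pair $\{z=0\},\{z=\infty\}$ descends to a single $0$-fibre of $F'$, and the two $j'$-invariant sections $\{w=0\},\{w=\infty\}$ descend to two disjoint $0$-sections of $F'$, each carrying two of the four $A_1$ singularities. The main obstacle in this plan is the first step: guaranteeing at every stage the existence of an equivariantly contractible disjoint pair. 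Termination and the identification of the minimal model are standard once such contractions are available, but the pathological configuration in which $E\cap j(E)$ is a single fixed point on $D$ requires the auxiliary equivariant blow-up described above before the induction can proceed.
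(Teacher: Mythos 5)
Your overall strategy (run an equivariant MMP upstairs on $\yd$ and push each step down to the quotient) is the reverse of the paper's, which runs the MMP directly on the $A_1$-singular quotient $(Z,F)$ using the generalized cone theorem, checks that the four $A_1$ points survive every extremal contraction, and then analyzes the resulting ruled surface (two sections versus a bisection) via degree-two covers by Hirzebruch surfaces. Your route could in principle work, but it founders exactly at the step you flag as the main obstacle, and the justification you give for that step is wrong.

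The claim ``any point of $E\cap j(E)$ must be a fixed point of $j$'' is false. The set $E\cap j(E)$ is only $j$-\emph{invariant}: if $q\in E\cap j(E)$ then $j(q)\in j(E)\cap E$ as well, but $j(q)$ need not equal $q$. So $E$ and $j(E)$ can meet in a pair of points $\{q,j(q)\}$ lying in $Y\setminus D$, and nothing in $E\cdot D=1$ rules this out; two interior $(-1)$-curves on a blowup of $\pl\times\pl$ at many points can easily meet in two or more points. Since the disjointness of $E$ and $j(E)$ is the linchpin of the entire induction, this is a genuine gap, not a technicality. Your patch for the residual case also fails: blowing up a transverse intersection point of $E$ and $j(E)$ turns their strict transforms into $(-2)$-curves, which can no longer be contracted to smooth points, and it raises $\mathrm{rk}\,\Pic Y$, which breaks your termination argument. (A further, smaller issue: the absence of \emph{interior} $(-1)$-curves does not force $Y'$ to be minimal when $n>4$; boundary components of $D'$ can become $(-1)$-curves after interior contractions, so corner contractions are also needed before you can invoke the classification of minimal rational surfaces.) The clean way to obtain an equivariantly contractible pair is the one the paper uses for Theorem \ref{desact}: produce the extremal $(-1)$-curve $\bar E$ \emph{downstairs} on $Z$ first (the cone theorem for surfaces with $A_1$ singularities guarantees it), and observe that its preimage $\rho^{-1}(\bar E)$ is automatically a disjoint pair because $\rho$ is \'etale over the smooth irreducible curve $\bar E$ away from the branch points. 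If you want to keep your upstairs-first narrative, you must either prove the existence of a $j$-orbit of pairwise disjoint interior $(-1)$-curves by some other means, or reorganize the argument so that the contraction is found on the quotient, as in the paper.
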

\begin{center}
\begin{tikzpicture}[out=20,in=160,relative]
\draw (-1,0) to (1,0) to (1,2) to (-1,2);
\draw[dashed] (-0.6,0.1) -- (-0.6,1.85);
\draw[dashed]  (0.6,0.1) -- (0.6,1.85);
\node [above] at (0,2) {0};
\node [right] at (1.2,1) {0};
\node [below] at (0,0) {0};
\node at (-0.6,0.1) {$\bullet$};
\node at (0.6,0.1) {$\bullet$};
\node at (-0.6,1.85) {$\bullet$};
\node at (0.6,1.85) {$\bullet$};
\end{tikzpicture}
\end{center}

Before proving this theorem let us state a more general result on the minimal model program for projective surfaces with $A_1$ singularities: this is already known in literature (see \cite{KM98}, Section 3.7, theorem 3.47), but we include it here for convenience of the reader.
\begin{theo}
Let $Z$ be a projective surface containing isolated singularities of type $A_1$. Then there exists a sequence of contractions $Z\longrightarrow Z'$ such that $Z'$ satisfies  one of the following
\begin{itemize}
\item[i.] $Z'$ has at worst $A_1$ singularities and $K_{Z'}$ is nef.
\item[ii.] $Z'$ has at worst $A_1$ singularities and it admits a map $Z'\overset{\varphi}{\longrightarrow} C$ where $C$ is a curve and the fibres of  $\varphi$ are smooth rational curves.
\item[iii.] $Z'$ is a Del Pezzo surface with at worst $A_1$ singularities and the Picard number is $\rho(Z')=1$.
\end{itemize}
\end{theo}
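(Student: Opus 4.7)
The plan is to apply the Minimal Model Program to $Z$. Since $A_1$ is a canonical (in particular klt) surface singularity, the Cone and Contraction Theorems of Mori theory are available for $Z$, and the statement is essentially the content of \cite{KM98}, \S 3.7, Theorem 3.47, specialised to the $A_1$ case.

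First I would check whether $K_Z$ is nef; if so, we are already in conclusion (i) with $Z' = Z$ and nothing to do. Otherwise the Cone Theorem produces a $K_Z$-negative extremal ray $R \subset \overline{\NE}(Z)$, and the Contraction Theorem gives a morphism $\varphi : Z \to Y$ contracting exactly those curves whose class lies on $R$. Three cases arise according to the dimension of $Y$: if $\dim Y = 0$, then $\rho(Z) = 1$ and $-K_Z$ is ample, so $Z$ is a del Pezzo surface of Picard number one and we are in case (iii); if $\dim Y = 1$, then $\varphi$ is a Mori fibration onto a curve whose generic fibre is a smooth rational curve, giving (ii); if $\dim Y = 2$, then $\varphi$ is a divisorial contraction of a single irreducible curve $C \subset Z$ to a point. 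In the last case I would replace $Z$ by $Y$ and iterate; since the Picard number drops by one at each divisorial step, Noetherian induction terminates the process in a model $Z'$ falling under (i), (ii), or (iii).

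The main obstacle is to verify in the divisorial case that a contraction of a $K_Z$-negative extremal curve from a surface with at worst $A_1$ singularities produces again a surface with at worst $A_1$ singularities --- the general klt MMP only preserves the broader class of klt points. I would handle this by passing to the minimal resolution $\pi : \tilde Z \to Z$, whose exceptional locus consists of pairwise disjoint $(-2)$-curves $F_1, \dots, F_s$ and which is crepant, i.e.\ $K_{\tilde Z} = \pi^* K_Z$. Writing the strict transform of $C$ as $\tilde C = \pi^* C - \tfrac{1}{2} \sum_{F_i \cdot C > 0} F_i$, the conditions $K_{\tilde Z} \cdot \tilde C = K_Z \cdot C < 0$ together with the extremality of $R$ constrain $\tilde C$ to a short list of configurations (a smooth rational curve either disjoint from the singular locus with $\tilde C^2 = -1$, or meeting exactly one $(-2)$-curve transversally at one point), each of which can be checked directly to produce only $A_1$ points on $Y$ after the contraction. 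Combined with the termination argument above, this yields the three alternatives in the statement.
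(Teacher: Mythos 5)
Your proposal is correct and follows essentially the same route as the paper: run the MMP using the cone and contraction theorems for klt surfaces, split into the three cases by the type of contraction, and control the divisorial case by passing to the minimal resolution and showing the intersection-theoretic constraints force the contracted curve to be either a $(-1)$-curve or a curve through a single $A_1$ point meeting its exceptional $(-2)$-curve transversally. The only slight imprecision is that you write the strict transform as $\pi^*C-\tfrac{1}{2}\sum F_i$ from the outset, whereas the coefficients are a priori arbitrary elements of $\tfrac{1}{2}\z_{\geq 0}$ and the paper derives that exactly one of them equals $\tfrac{1}{2}$ from the inequality $2\sum\mu_i^2<1$; this is the same computation, just stated in the right logical order.
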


\begin{proof}
First, using the cone theorem in its generalized version (see for example \cite{KM98}, p.76, Theorem 3.7), we know that the contraction map $c_R:Z\rightarrow Z'$ exists for every extremal ray $R$ contained in the cone of curves of $Z$ such that $R\cdot K_Z<0$. Moreover if $C$ is a rational curve such that $[C]\in R$ then we get that $\rho(Z')=\rho(Z)-1$, moreover:
\begin{itemize}
\item [1.] If $C^2<0$, then $Z'$ has dimension 2. Here every curve whose class is contained in $R$ is contracted to one point $p$ and in fact the fiber over this point $p$, $c_R^{-1}(p)$, consist of one irreducible curve.
\item [2.] If $C^2=0$ then $Z'$ has dimension 1 and $\rho(Z)=2$. In this case an argument analogous to the one used for the smooth case shows that the fibres are connected and irreducible. Moreover they are still smooth and rational: let $F$ be a fibre. Then by assumption $F\cdot K_Z<0$ and, using the adjunction formula for the singular case we get $K_Z\cdot F + F^2=2p_a(F)-2+\textnormal{Diff}(Z,F)$, where $\textnormal{Diff}(Z,F)$, the different, is always a non negative quantity and $F^2=0$ (cfr. \cite{FA92}, section 16). Therefore we must have $2p_a(F)-2<0$, which implies that $p_a(F)=0$ so that $F$ is smooth and rational, as expected.

\item [3.] If $C^2>0$ then $Z'$ is a point and $\rho(Z)=1$.
\end{itemize}
Let us focus on case 1. There are only 2 types of curves $C$ satisfying $C^2<0$ and $C\cdot K_Z<0$: either (-1)-curves (as for smooth surfaces) or rational smooth curves passing through one surface singularity. To see this suppose that $C$ goes through two or more singularities. Resolve these singularities, and consider the corresponding map $\pi: \widetilde{Z}\rightarrow Z$. Let $E_1,E_2, \dots, E_l$ be the exceptional divisors and $\widetilde{C}$ the strict transform of $C$, then $$\widetilde{C}=\pi^*(C) -\mu_1 E_1-\dots -\mu_l E_l$$ where $\mu_1,\dots,\mu_l\in \frac{1}{2}\mathbb{Z}$ and they are non negative. We have, on one side
\begin{equation}\label{eq1}\widetilde{C}^2=C^2-2\mu_1^2-\dots-2\mu_l^2<0 \end{equation}
and on the other side
\begin{equation}\label{eq2}\widetilde{C}\cdot K_Z=C\cdot K_Z<0\end{equation}
Therefore, given that $\widetilde{Z}$ is smooth, \ref{eq1} and \ref{eq2} imply that $C$ is a (-1)-curve. Thus, by \ref{eq1}
$$-1=C^2-2\mu_1^2-\dots -2\mu_l^2 \implies -C^2=1-2\mu_1^2-\dots -2\mu_l^2$$
so that
$$0<1-2\mu_1^2-\dots -2\mu_l^2 \implies 2\mu_1^2+\dots+2\mu_l^2<1$$
The latter is impossible unless there is in fact only one exceptional curve $E$ and the corresponding $\mu=\frac{1}{2}$, in which case $\widetilde C\cdot E=\pi^*(C) -\frac{1}{2} E=-\frac{1}{2}(-2)=1$ hence $\widetilde C$ meets $E$ trasversally and there can be only one singularity on $C$. Locally $p\in C\subset Z$ is analytically isomorphic to $0\in(u=0)\subset \C^2/\frac{1}{2}(1,1)$.\par
The contraction of a (-1)-curve, exactly as for the smooth case, corresponds to a standard blow up. As for the second type of curve, the map $c_R:Z\rightarrow Z'$ is such that $c_R(C)=p$, where $p$ is a smooth point on $Z$. Indeed, suppose we resolve the singularity through $C$, we get a map $\pi:\widetilde{Z}\rightarrow Z$. As we did above, let $E$ be the exceptional divisor and $\widetilde{C}$ the strict transform of $C$: we can now first contract $E$ and then contract the image of $\widetilde{C}$ which has become a $(-1)$-curve, thus the composition of these two contractions, $\phi$, gives a new smooth surface and in particular $E,\widetilde{C}$ are mapped to a smooth point. 

$$\xymatrix{E\cup\widetilde{C}\ar@{|->}[d] \ar@/^2pc/[rrr]\ar@{}[r]|{\subset}& \widetilde{Z}\ar[r]^{\pi}\ar[d] & Z\ar[d] & C\ar@{|->}[d]\ar@{}[l]|{\supset}\\p \ar@{}[r]|{\in}& \overline{Z}\ar[r]^{\cong} & Z' & p'\ar@{}[l]|{\ni}}$$
Now the proof continues as in the usual (smooth) case: 
\begin{itemize}
\item We start with the surface $Z$. If $K_Z$ is a nef divisor then we stop: we have obtained the result stated in (i).
\item Otherwise there exists an extremal ray in the cone of curves of $Z$ whose intersection with the canonical divisor is negative: the contraction will produce one of the outcomes described at the beginning of this proof. If we are in case 2 or 3 then we stop and we get the result stated in (ii) or (iii).
\item If we are in case 1 then we go back to the first step and keep iterating the algorithm.
\end{itemize}
Note that given what we said about the possible types of curves that get contracted, we end up with a surface with at most the same number of singularities $Z$ had.
\end{proof}

\begin{remark}\label{cases23}
Let us analyze cases (ii) and (iii) more in detail. Firstly, there is only one singular Del Pezzo surface with Picard number equal to 1 only containing $A_1$ singularities: it is the weighted projective space $\mathbb{P}(1,1,2)$ (cfr \cite{D12}, chapter 8). \\
Now let us consider the second possible outcome of the minimal model program applied to $Z$. So far we know that we get a $\pl$-fibration $Z'\overset{\varphi}{\longrightarrow} C$ where $C$ is a curve, but we can actually say more about the arrangement of singularities along the fibers of this ruling: thanks to lemma 3.4 in \cite{KMcK99}, given a fibre $F$ then $Z'$ is smooth over $F$ or $F$ contains exactly two $A_1$ singularities or there's a unique singularity, which is a $D_n$ singularity, along $F$. Since the last case cannot happen with our initial assumptions, then if there are singularities along $F$ they must be exactly two and of type $A_1$.
\end{remark}

We can now go back to the theorem stated at the beginning of this section
\begin{proof}[Proof of theorem \ref{mmp}]
Given a pair $(Z,F)$ obtained as a quotient of a negative definite anticanonical pair $\yd$, run the minimal model program as described in the result we just proved: we know there are three possible outcomes for $Z'$. Since the surface we start with is rational with four singularities of type $A_1$, then it clearly has to be either as in (ii) or as in (iii). Moreover, there are no curves $C$ such that $C^2=-1/2$ and $K_Z\cdot C=-1$ at the same time. Indeed, suppose $C$ is such a curve: then we must have $(C\cdot F)_p=1/2(2k+1)$ where $p$ is the unique $A_1$ singularity on $C$ and $k$ is a non negative integer, since locally at the intersection we can write $C$ as the zero locus of some $f(u,v)=u^(2k+1)+\dots$ and the exponent of $l$ must be odd because $C$ is not Cartier at $p$. But this contradicts the hypothesis that $C\cdot F=C\cdot (-K_Z)=1\in\z$.

Therefore, running the minimal model program preserves all four singularities and, at the end, we get a new surface $(Z',F')$ still containing 4 $A_1$ singularities. Thanks to remark \ref{cases23} we can thus conclude that $(Z',F')$ is a surface admitting a ruling $\phi:Z'\rightarrow \pl$ such that each fiber is a smooth rational curve and the $A_1$ singularities lie in pairs on two distinct fibres and on two distinct components of $F'$. 

It remains to prove that the two components of $F$ containing the $A_1$ singularities are sections for the ruling and that there is only one more component in $F$ and it is a fibre. We know that $F'\cdot G=(-K_{Z'})\cdot G=2$, where $G$ is a fiber of the ruling. This implies that either $F'$ contains two sections or one bisection. In the first case we get that the two components containing the $A_1$ singularities $F'_1,F'_2$ are sections for the ruling. Moreover, in this case the irreducible components of $F'$ are exactly three and the third component is a fibre. Indeed, suppose that the two sections $F'_1,F'_2$ in $F'$ meet at point $q$ and call $G$ the fibre through that point. In this case we can always blowup at $q$ thus obtaining a new pair $(\hat Z,\hat F)$ where $\hat F$ is made of the strict transforms $\hat F_1,\hat F_2$ of $F'_1,F'_2$ plus the exceptional divisor $E$. Note that now the strict transform of $G$ has self intersection $(-1)$ and meets $E$ transversally but it is disjoint from $\hat F_1,\hat F_2$. Therefore we can contract the strict transform of $G$ and we get a pair $(Z'',F'')$ where $F''$ consists of the images of $\hat F_1,\hat F_2$ and the image of $E$ which has now self intersection 0 and is a fiber for the ruling on $(Z'',F'')$. Finally, we may always assume that the sections have self intersections equal to 0 by always contracting on the most negative section (cfr. the proof of theorem 2.1 in \cite{M90}). In the second case we need to be more careful. We have two possibilities.
\begin{itemize}
\item [a.] $F'$ consists of three irreducible components: one bisection $B$ and two fibres $G_1,G_2$ and the $A_1$ singularities lie on the fibres.
\item [b.] $F'$ consists of two components: the bisection $B$ and one fibre $G$. One pair of $A_1$ singularities lies on $B$ and one on $G$.
\end{itemize}
Let us assume we are in case (a). Consider the cover of degree two $f: (S\rightarrow \pl)\longrightarrow (Z'\rightarrow \pl)$, brunched at the four $A_1$ singularities, constructed via base change and normalization from a cover of degree two of $\pl$ by itself (branched the corresponding two points). Observe that by construction $S$ has to be a Hirzebruch surface $\mathbb{F}_n$ for some $n$ and that $f^{-1}(B)=B_1\sqcup B_2$, with each $B_i$ mapping isomorphically to $B$. Since $B_1\cdot B_2=0$, then $B^2=0$, therefore $S\cong\pl\times\pl$ and we can choose the other natural ruling of $\pl\times\pl$ to get the configuration we are interested in for the quotient $Z'$: the singularities on the (now) two sections in $F$ and a fibre as the third component. Now let us consider case (b). We start with the same $2:1$ cover described above. Let $\Gamma$ be the preimage of $B$ under this map and $\hat G$ the preimage of the fibre. Even in this case $S$ needs to be a Hirzebruch surface $\mathbb{F}_n$ for some $n$ and we have $-K_S\sim (n+2)\hat G+2B$, where $B$ is the negative section in $S$. On the other hand, $-K_S\sim \Gamma+\hat G$ and since $B$ can't be contained in $\Gamma$, we need to have $\Gamma\cdot B=(-K-\hat G)\cdot B=1-n>0$. This implies that $n$ is either 0 or 1. If $n=0$ we are back in the situation described in case (a) and we get the required conclusion. If $n=1$ instead, we get a contradiction. Indeed, suppose $n=1$: then $B$ is the only negative section in $S$, thus the $\zz$-action associated to the degree two covering has to send $B$ to itself. As a consequence $B$ has two fixed points (or it is fixed pointwise). But the action only has four fixed points, one for each singularity in $Z'$. Thus we get a contradiction. This concludes the proof.
\end{proof}

\begin{remark}\label{baseact}
Consider $\pl\times\pl$ along with its toric boundary 
$$\Delta=(\pl\times\{0,\infty\})\cup(\{0,\infty \}\times\pl)$$
Let $(z,w)$ be complex coordinates on $\pl\times\pl$ and set $\Delta_1=\{\infty\}\times \pl_w$, $\Delta_2=\pl_z\times\{\infty\}$, $\Delta_3=\{0\}\times \pl_w$, $\Delta_4=\pl_z\times\{0\}$. Define the map $j_0: (z,w)\mapsto (1/z,-w)$. Then $j_0$ is an involution with 4 fixed points $(1,0),(-1,0),(1,\infty),(-1,\infty)$ that interchanges $\Delta_1$ and $\Delta_3$ and preserves $\Delta_2,\Delta_4$. Thus $j_0$ is an involution of the labeled anticanonical pair $(\pl\times\pl,\Delta)$ and it defines a $\zz$-action on it which is free on $U=\pl\times\pl\setminus \Delta$.
\end{remark}

\begin{lem}\label{pllem}
The surface $(Z',F')$ in theorem \ref{mmp} is obtained from $\pl\times\pl$ as a quotient by the action defined in remark \ref{baseact}.
\end{lem}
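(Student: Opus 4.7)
The plan is to invert the construction of remark \ref{baseact} by explicitly building the canonical double cover of $(Z',F')$ and identifying it with $(\pl\times\pl,\Delta)$ equipped with the involution $j_0$. Let $\pi:Z'\to\pl$ be the ruling of theorem \ref{mmp} and let $b_1,b_2\in\pl$ be the two base points whose fibres contain the four $A_1$ singularities. I would form the unique double cover $\phi:\tilde B\to\pl$ branched at $\{b_1,b_2\}$ and set $S:=(Z'\times_{\pl}\tilde B)^\nu$. A local computation at each $A_1$ point -- for the standard model $\{uv=w^2\}\subset\mathbb A^3$ with base parameter $u$, the fibre product $\{uv=w^2,\,u=s^2\}$ normalizes to $\textnormal{Spec}\,\C[s,\tilde y]$ via $w=s\tilde y$, $v=\tilde y^2$ -- shows simultaneously that $S$ is smooth and that the preimage of any section of $\pi$ passing through the singularity remains irreducible. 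Consequently $S\to\tilde B$ is a smooth $\pl$-bundle, hence a Hirzebruch surface.

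Next I would check $S\cong\pl\times\pl$ exactly as in the proof of theorem \ref{mmp}: the preimages $\tilde F'_1,\tilde F'_2$ of the two sections contained in $F'$ are disjoint sections of $S\to\tilde B$ (since $F'_1\cap F'_2=\emptyset$), and only $\mathbb F_0$ carries disjoint sections. The remaining component $G_0\subset F'$ is a smooth fibre over a base point distinct from $b_1,b_2$, so its preimage splits as two disjoint fibres $\tilde G,\tilde G'$ of $S\to\tilde B$. Thus the full preimage of $F'$ is the cycle $\tilde F'_1+\tilde G+\tilde F'_2+\tilde G'$ of two sections and two fibres, which after a choice of coordinates $(z,w)$ on $\pl\times\pl$ coincides with the toric boundary $\Delta$.

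The core step that remains is to identify the covering involution $\tau:S\to S$ with $j_0$. By construction $\tau$ preserves the ruling $S\to\tilde B$, preserves each $\tilde F'_i$ individually (each being the unique preimage of the corresponding $F'_i$), and swaps $\tilde G\leftrightarrow\tilde G'$. These constraints force $\tau(z,w)=(cz,d/w)$ for some constants $c,d$; then $\tau^2=\mathrm{id}$ together with the requirement that the fixed locus be four isolated points (the preimages of the $A_1$ singularities, rather than whole curves) forces $c=-1$, after which a rescaling of $w$ achieves $d=1$, giving $\tau(z,w)=(-z,1/w)$. The automorphism of $(\pl\times\pl,\Delta)$ exchanging the two factors conjugates this $\tau$ into $j_0(z,w)=(1/z,-w)$, so $(Z',F')=(S,\Delta)/\tau\cong(\pl\times\pl,\Delta)/\gen{j_0}$ as required.

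The main technical hurdle is the local normalization computation at the $A_1$ singularities, which must simultaneously produce the smoothness of $S$ and the irreducibility of each $\tilde F'_i$; once this is established, the identification of $\tau$ with $j_0$ is forced by the rigidity of $\textnormal{Aut}(\pl\times\pl,\Delta)$ together with the involution condition.
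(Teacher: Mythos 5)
Your proposal reaches the right conclusion but by a genuinely different (and more self-contained) route than the paper. The paper's proof is very short: it \emph{assumes} that $(Z',F')$ is the quotient of $\pl\times\pl$ by some involution $f$ preserving the toric boundary, observes that $f$ then restricts to an automorphism of the big torus $(\C^*)^2$, uses $\aut\,(\C^*)^2\cong\GL(2,\z)\rtimes(\C^*)^2$ and the prescribed permutation of the components of $\Delta$ to force the linear part to be $\operatorname{diag}(-1,1)$, and finally pins down the translation part via the involution condition and the requirement of isolated fixed points. You instead \emph{construct} the double cover explicitly as the normalized fibre product over the branched double cover of the base $\pl$, verify smoothness and the behaviour of the sections by the local computation at the $A_1$ points, and only then determine the deck involution. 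Your final step (constraining $\tau$ by its action on the four boundary components plus the isolated-fixed-point condition) is essentially the same rigidity argument as the paper's, phrased with $\aut(\pl\times\pl)\cong(\textnormal{PGL}_2\times\textnormal{PGL}_2)\rtimes\zz$ rather than with torus automorphisms. What your version buys is a justification of the existence of the cover, which the paper leaves implicit (it is only constructed in the bisection cases (a)--(b) of the proof of Theorem \ref{mmp}, not in the two-sections case that actually produces the configuration of the lemma); what the paper's version buys is brevity.

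One step of your argument is wrong as stated: the claim that ``only $\mathbb F_0$ carries disjoint sections.'' Every Hirzebruch surface $\mathbb F_n$ has disjoint sections, namely the negative section $B$ and any section in $|B+nF|$ (already on $\mathbb F_1$, the exceptional curve and the strict transform of a line missing the centre of the blowup are disjoint sections). Disjointness of $\tilde F'_1,\tilde F'_2$ alone therefore does not identify $S$. The conclusion is still correct, but you must use the self-intersection numbers: by Theorem \ref{mmp}(iii) each $F'_i$ has $F_i'^2=0$ on $Z'$, and since $\tilde F'_i=f^*F'_i$ with $f$ of degree $2$, the projection formula gives $\tilde F_i'^2=2F_i'^2=0$; two disjoint sections both of self-intersection $0$ (equivalently, one section of self-intersection $0$) do force $S\cong\mathbb F_0$. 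This is a one-line repair, and it is in fact the same computation the paper carries out in case (a) of the proof of Theorem \ref{mmp}.
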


\begin{proof}
Let $f$ be the involution on $\pl\times\pl$ such that the associated quotient space is given by $(Z',F')$. Then $f$ is induced by an automorphism of the algebraic torus $(\C^*)^2$: since $\textnormal{Aut}\ (\C^*)^2\cong \textnormal{GL}(2,\z)\rtimes (\C^*)^2$, then the involution on the torus has to be of the form $(B,t)$, where $B$ is a linear involution. More precisely
$$\begin{pmatrix}-1 & 0 \\ 0 & 1\end{pmatrix}$$
because of how it has to act on the toric boundary of $\pl\times\pl$. Therefore the map has to have the form $(x_1,x_2)\mapsto (t_1x_1^{-1},t_2x_2)$. Finally for this map to be an involution we must have $(t_1t_1^{-1}x_1,t_2^2x_2)=(x_1,x_2)$. It follows that on the one hand, we may always assume $t_1=1$ and on the other hand $t_2^2=1$ gives $t_2=\pm 1$. Since we want the involution to have only isolated fixed points, $t_2=-1$ and the claim is proved.
\end{proof}

We are now ready to state the main result of this section that gives a characterization of Looijenga pairs $\yd$, with $D$ of length at least four, admitting an antisymplectic involution fixed point free away from $D$.

\begin{theo}\label{desact} 
A negative definite Looijenga pair $\yd$ with $n\geq 4$ is equipped with an antisymplectic involution $j$ that is fixed point free on $Y\setminus D$, if and only if there exists a sequence of contractions of pairs of disjoint $(-1)$ curves
\begin{equation}\label{contmaps}
\yd\overset{\psi_1}{\longrightarrow}(Y_1,D_1)\overset{\psi_2}{\longrightarrow}\dots\overset{\psi_{m-1}}{\longrightarrow}(Y_{m-1},D_{m-1})\overset{\psi_m}{\longrightarrow}(\pl\times\pl, \Delta)
\end{equation}
that respects the $\zz$-action defined on $\yd$ and induces on $(\pl\times\pl,\Delta)$ the action defined in remark \ref{baseact}.
\end{theo}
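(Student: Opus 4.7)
The ``if'' direction is the easy one. Starting from $(\pl\times\pl,\Delta)$ equipped with the involution $j_0$ of Remark \ref{baseact}, each map $\psi_i$ is the contraction of a pair of disjoint $(-1)$-curves that are swapped by the action. Reversing the contractions means blowing up at $\zz$-orbits of size two (never at a fixed point), so at each stage the induced involution on $(Y_{i-1},D_{i-1})$ is again antisymplectic (it reverses the orientation of the cycle since this is already true on the base) and remains free on $Y_{i-1}\setminus D_{i-1}$, because the blown-up points lie off the fixed locus (which is contained in $D_i$). Iterating gives the required involution $j$ on $\yd$.

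For the ``only if'' direction, let $j$ be an antisymplectic involution of $\yd$ free on $Y\setminus D$. Form the quotient $\rho:Y\to Z$ and let $F=\rho(D)$; by the same analysis as in Proposition \ref{sym} and Proposition \ref{cuspact}, $Z$ has exactly four $A_1$ singularities lying in pairs on the two components $F_{n/2},F_n$ of $F$ which are fixed by the reflection. The surface $(Z,F)$ thus satisfies the hypotheses of Theorem \ref{mmp}, so there is a sequence of contractions
\[(Z,F)\longrightarrow (Z_1,F_1)\longrightarrow\cdots\longrightarrow (Z_m,F_m)=(Z',F')\]
ending at the minimally ruled model. The plan is to lift this MMP one step at a time to an equivariant MMP on $Y$.

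The central observation is the one already used inside the proof of Theorem \ref{mmp}: on each $Z_i$ the MMP contracts only honest $(-1)$-curves $C$ (with $C^2=-1$ and $K_{Z_i}\cdot C=-1$) that avoid the $A_1$ singularities, since a curve passing through a singularity would satisfy $C^2=-1/2$ and $(-K_{Z_i})\cdot C=1\in\z$, contradicting $C\cdot F_i=1$ in the half-integer sense. The preimage $\rho^{-1}(C)\subset Y_{i-1}$ is then an étale double cover of $C\cong\pl$, hence a trivial cover $\widetilde C^{(1)}\sqcup \widetilde C^{(2)}$; both components are smooth rational, swapped by the induced involution, and disjoint, since any intersection point would be a $j$-fixed point lying off $D_{i-1}$, contradicting the hypothesis. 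A quick computation $(\rho^*C)^2=2C^2=-2$ shows each component is a $(-1)$-curve. Contracting this pair equivariantly gives $(Y_i,D_i)$ fitting into a commutative square with the contraction $Z_{i-1}\to Z_i$ of the quotient, and $D_i$ remains an anticanonical cycle (either a smooth point of $D_{i-1}$ was blown down, or a $(-1)$-component of $D_{i-1}$ was; in either case the adjacency relations are preserved). One checks that $n\geq 4$ is maintained at every step, since the final $D_m$ has length $4$.

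It remains to identify $(Y_m,D_m)$ with $(\pl\times\pl,\Delta)$ together with $j_0$. By Lemma \ref{pllem}, $(Z',F')$ is precisely the quotient of $(\pl\times\pl,\Delta)$ by $j_0$, so both $(Y_m,D_m)$ and $(\pl\times\pl,\Delta)$ arise as double covers of $(Z',F')$ étale on the smooth locus. The class-group computation of Proposition \ref{uniqueinv} applies verbatim: such covers are classified by $2$-torsion in $\text{Cl}(Z')$, and only one of them recovers a smooth rational surface with the cycle configuration forced by $j$ being free on the complement, namely $\pl\times\pl$. Combining the isomorphism of covers with the uniqueness of the involution up to the labeled automorphism group of $(\pl\times\pl,\Delta)$ yields the desired identification of the pair and the action, completing the sequence of contractions \eqref{contmaps}.

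The main obstacle is the verification in the third paragraph: that every MMP step on $(Z,F)$ lifts to the contraction of a pair of disjoint $(-1)$-curves on $Y_{i-1}$, rather than to a single $j$-invariant curve. This rests on the two-part argument that (i) the MMP never contracts a non-Cartier curve through an $A_1$ singularity (ruled out by the integrality constraint $C\cdot F_i\in\z$), and (ii) under the hypothesis that $j$ is free on $Y\setminus D$, no $(-1)$-curve on $Y_{i-1}$ can be $j$-invariant, since a $j$-invariant $\pl$ would contribute two fixed points, at most one of which can lie on $D_{i-1}$.
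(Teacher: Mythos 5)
Your proposal is correct and follows essentially the same route as the paper: the ``if'' direction lifts $j_0$ through equivariant pairs of blowups (the paper's Remark \ref{resact}), and the ``only if'' direction runs the MMP of Theorem \ref{mmp} on the quotient $(Z,F)$, uses the exclusion of curves through the $A_1$ points to see that each extremal curve pulls back to two disjoint $(-1)$-curves, and contracts them equivariantly to reach $(\pl\times\pl,\Delta)$ via Lemma \ref{pllem}. The only (harmless) divergence is at the last step, where you identify $(Y_m,D_m)$ with $(\pl\times\pl,\Delta)$ by classifying double covers of $(Z',F')$ as in Proposition \ref{uniqueinv}, while the paper appeals directly to Lemma \ref{pllem}.
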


\begin{proof}
Let us start assuming that $\yd$ is equipped with an antisymplectic involution. 
Using theorem \ref{mmp}  and lemma \ref{pllem} we get the diagram:

$$\xymatrix{(Z,F)\ar[r]^{\phi_1} & (Z_1,F_1)\ar[r]^{\phi_2} & \dots\ar[r] & (Z_{m-1},F_{m-1})\ar[r]^{\phi_m} &(Z',F')  \\ \yd\ar[u]^{p}&&&& (\pl\times\pl, \Delta)\ar[u]^{q}} $$
The map $\phi_1$ is the blow up of a single point $p$ in $(Z_1,F_1)$ lying on one of the irreducible components of $F_1$: let $E$ be the exceptional curve $E=\phi_1^{-1}(p)$ and consider the preimage of this (-1)-curve via the quotient map $p$, $\{E_1,E_2\}=p^{-1}(E)$. These two rational curves are in fact (-1)-curves which do not intersect: if they did, they would share a fixed point, hence $E$ would contain a singularity and this leads to a contradiction. Therefore we can subsequently contact $E_1$ and $E_2$. Let $(Y_1,D_1)$ be the composition of the contractions of $E_1,E_2$ in $\yd$. The diagram becomes

$$\xymatrix{(Z,F)\ar[r]^{\phi_1} & (Z_1,F_1)\ar[r]^{\phi_2} & \dots\ar[r] & (Z_{m-1},F_{m-1})\ar[r]^{\phi_m} &(Z',F')  \\ \yd\ar[r]^{\psi_1}\ar[u]^{p} & (Y_1,D_1)\ar[u]^{q_1} &&& (\pl\times\pl, \Delta)\ar[u]^{q}} $$
where the square on the left commutes. Repeating the same process $m-1$ times we obtain the sequence of maps:

$$\xymatrix{(Z,F)\ar[r]^{\phi_1} & (Z_1,F_1)\ar[r]^{\phi_2} & \dots\ar[r] & (Z_{m-1},F_{m-1})\ar[r]^{\phi_m} &(Z',F')  \\ \yd\ar[r]^{\psi_1}\ar[u]^{p}&(Y_1,D_1)\ar[r]^{\psi_2}\ar[u]^{q_1}&\dots\ar[r]&(Y_{m-1},D_{m-1})\ar[u]^{q_{m-1}}\ar[r] & (\pl\times\pl, \Delta)\ar[u]^{q}} $$
thus concluding the proof of necessity.
Viceversa, if there exists a sequence of contraction maps as in (\ref{contmaps}), then we can lift the involution $j_0$ defined in remark \ref{baseact} all the way up to the pair $\yd$, thus getting an antisymplectic involution fixed point free away from $D$ (see remark \ref{resact}).
\end{proof}

\begin{remark}\label{resact}
Let $\yd$ be a Looijenga pair and $j:\yd\rightarrow\yd$ an antisymplectic involution such that $j(D_i)=D_{\sigma(i)}$ with $\sigma$ a reflection in the dihedral group. Let the pair $\{p,j(p)\}$ be made of a point sitting on one of the irreducible components of $D$ and its image through $j$. Let $\pi:(\widetilde{Y},\widetilde{D})\rightarrow \yd$ be the composite map obtained by blowing up at $p$ and $j(p)$. Then the map $j$ lifts to a unique map $\widetilde{j}:(\widetilde{Y},\widetilde{D})\rightarrow (\widetilde{Y},\widetilde{D})$ (cfr. \cite{H77}, chapter 7) which is still an involution.
We explicitly note that if $p$ is a smooth point on $D$, so is $j(p)$, thus in this case $\widetilde{D}=\sum_i \widetilde{D}_i$ where $\widetilde{D}_i$ is the strict transform of $D_i$ and $\widetilde{j}(\widetilde{D}_i)=\widetilde{D}_{\sigma(i)}$.
Similarly if $p$ is a node, then $j(p)$ is a node as well, therefore $\widetilde{D}=\sum_i \widetilde{D}_i+E_p+E_{j(p)}$ where $E_p$ and $E_{j(p)}$ are the exceptional divisors of $\pi$, $\widetilde{j}(\widetilde{D}_i)=\widetilde{D}_{\sigma(i)}$ and $\widetilde{j}(E_p)=E_{j(p)}$. As a consequence $\widetilde{j}$ is still an involution of Looijenga pairs.
\end{remark}

\begin{remark}\label{equivtoricmod}
Theorem \ref{desact} implies that if a Looijenga pair $\yd$ admits an antisymplectic involution $j$, then there exists a toric model $\yd\rightarrow \oyd$ such that $\oyd$ admits an antisymplectic involution $\bar j$.
\end{remark}

Theorem \ref{desact} gives us a useful criterion to decide, given a cusp $D$, whether or not it is possible to find a smooth rational surface $Y$ where $D$ sits as an anticanonical divisor (in other words if there exists a negative definite Looijenga pair $\yd$ equipped with an involution $j$ which is free on $Y\setminus D$). Indeed, suppose you can. If there exists $\yd$ admitting the said action, then there must exist a map consisting of blowups going from $\pl\times\pl$ to $\yd$ which respects the action at each step. Now, the existence of the latter map can be checked algorithmically only using the information coming from the cycle of self intersections of $D$. Moreover, if $D$ is a cusp of length $4\leq n\leq 10$ we have the following result.

\begin{prop}\label{n10}
Let $D$ be a symmetric cusp of length $4\leq n\leq 10$. Then there always exists a surface $Y$ where $D$ sits as an anticanonical divisor equipped with an antisymplectic involution $j$ that is fixed point free on $Y\setminus D$.
\end{prop}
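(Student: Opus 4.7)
The plan is to invoke Theorem~\ref{desact}, which asserts that the existence of a negative definite Looijenga pair $\yd$ with an antisymplectic involution free away from $D$ is equivalent to the existence of an equivariant sequence of contractions of disjoint pairs of $(-1)$-curves from $\yd$ down to $(\pl\times\pl,\Delta)$ with the standard involution $j_0$. Reversing this, it suffices to show that for every symmetric negative definite cycle $D$ of length $n$ with $4\leq n\leq 10$, there is an equivariant sequence of blowups starting from $(\pl\times\pl,\Delta)$ whose output has $D$ as its anticanonical divisor.

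Since $D$ is symmetric in the sense of Definition~\ref{symmdef}, the cycle of self-intersections is palindromic and the two components in the positions fixed by the reflection $\sigma$ (namely $n/2$ and $n$) carry even self-intersections. Two types of equivariant blowups are available: equivariant toric blowups at a pair of nodes interchanged by the involution, which increase the length of $D$ by $2$ and introduce a new pair of $(-1)$-curves; and equivariant interior blowups at a pair of smooth points $\{p,j(p)\}$, which preserve the length but lower self-intersections by $1$ at each of the affected components (by Remark~\ref{resact}). The starting pair $(\pl\times\pl,\Delta)$ has cycle $(0,0,0,0)$ and involution $j_0$ fixing $\Delta_2,\Delta_4$ setwise and swapping $\Delta_1\leftrightarrow\Delta_3$.

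I would then treat each value $n\in\{4,6,8,10\}$ in turn. For $n=4$ the length already matches; one lowers self-intersections by equivariant interior blowups, using pairs of points on the fixed components $\Delta_2,\Delta_4$ to lower $e_2$ and $e_4$ two at a time (which is exactly what the even parity requirement on $e_{n/2},e_n$ permits) and pairs on $\Delta_1\cup\Delta_3$ to lower $e_1=e_3$ equally. For $n>4$, first perform $(n-4)/2$ equivariant toric blowups so as to reach the required length, placing the new components in the positions demanded by the palindromic shape of $D$; then perform interior blowups as above. At every step the symmetry of the target cycle ensures one can legitimately pick the pair of blowup points in an equivariant manner.

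The main obstacle is verifying that the procedure actually succeeds for every admissible symmetric cycle with $n\leq 10$; concretely, that the palindromic and parity constraints on $D$ are compatible with a coherent choice of equivariant blowups at each intermediate stage. This reduces to a finite case analysis, organized by length and by the enumeration of negative-definite anticanonical cycles from~\cite{F15}. The hypothesis $n\leq 10$ is essential: as the paragraph after Conjecture~\ref{mainc} notes, already at length $12$ there exist symmetric cusps for which no such equivariant surface $\yd$ can be built, so the bound in the statement cannot be improved by this type of construction.
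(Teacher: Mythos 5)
Your overall strategy is exactly the paper's: start from $(\pl\times\pl,\Delta)$ with the involution $j_0$ of Remark~\ref{baseact}, reach length $n$ by equivariant pairs of corner blowups, then realize the target self-intersections by equivariant pairs of interior blowups, lifting the involution at each stage via Remark~\ref{resact} (the converse direction of Theorem~\ref{desact}). For $n=4$ your argument is complete and matches the paper's.

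For $n=6,8,10$, however, you have only named the decisive step rather than carried it out. The entire content of the proposition is the claim that \emph{every} symmetric negative definite cycle of length $6$, $8$ or $10$ is actually reachable by such an equivariant sequence, and this is not automatic: the identical procedure fails for twelve symmetric cycles of length $12$ (Table~\ref{n12cusps}), so ``reduces to a finite case analysis'' is precisely the point at which the proof could break. What the paper does, and what your write-up omits, is to exhibit an explicit finite list of \emph{equivariant toric models} --- the cycles in Figure~\ref{tp}, one for $n=6$, one for $n=8$, three for $n=10$ --- each carrying the lifted involution, and then to argue that these are minimal in the sense that every symmetric negative definite cycle of the given length dominates one of them componentwise (so that the remaining discrepancy is achieved purely by symmetric pairs of interior blowups, which only decrease self-intersections). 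The minimality argument uses the constraints you correctly identified ($e_i\ge 2$ for all $i$, some $e_j\ge 3$ hence by symmetry a pair $e_j=e_{\sigma(j)}\ge 3$ or a fixed index with $e_i\ge 4$), but the verification that the listed toric cycles suffice, and that they themselves admit the equivariant structure (the corner blowups occurring in pairs $\{p,j(p)\}$ of nodes, never at a fixed node), must be exhibited rather than asserted. Without producing these models, or an equivalent exhaustive check, the proof is not complete.
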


\begin{proof}[proof of \ref{n10}]
We begin with $(\pl\times\pl,\Delta)$ with the action defined in \ref{baseact} associated to the involution $j_0$: note that $j_0$ is such that $j_0(D_i)=D_{\sigma(i)}$, where $\sigma$ is the reflection given by $1\mapsto 3$, 2,4 are fixed.\par
If $n=4$, then the required $\yd$ is obtained from $(\pl\times\pl,\Delta)$ via a sequence of interior blowups. Moreover given the symmetries of the self intersections of  the $D_i$'s, every time we blow up at a point $p$ on $D_1$ we need to blow up at a point on the corresponding $D_{\sigma(1)}=D_3$ (and vice versa) and we can always choose this point to be $j(p)$; similarly the number of blowups needed at points lying on $D_2,D_4$ is even, thus we can always perform them in pairs at points $p,j(p)$, respectively on $D_2$ or $D_4$ avoiding the points fixed by the action. We are in the situation described in remark \ref{resact}, hence the involution $j$ lifts to the negative definite Looijenga pair $\yd$.\par
If $n>4$ then we first perform $n-4$ corner blowups to get a toric pair of the right length among those whose cycles are described in figure \ref{tp}. We observe that:
\begin{itemize}
\item [i.] The self intersections for these cycles are minimal, in the following sense: all of the $-D_i^2$ are either 1 or 2, except possibly for a pair of curves with self intersection -3 or a single curve with self intersection -4, so that any other negative definite Looijenga pair with symmetric $D$ can be obtained by a sequence of non toric blowups from one of these pairs. Indeed, recall that for a Looijenga pair to be negative definite we must have $-D_i^2\geq 2$ for all $i$ and at least one $j$ such that $-D_j^2\geq 3$. Moreover, since $D$ is symmetric, the restrictions are a bit tigher: for at least two indices $j,\sigma(j)$ we must have $-D_{j}^2\geq 3, -D_{\sigma(j)}^2\geq 3$ and if $i$ is an fixed by the reflection $\sigma$, and $-D_i^2\geq 3$, then in fact $-D_i^2\geq 4$ (it must be even).
\item [ii.] The number of nodes we need to blow up is always even and they come in pairs $\{p,j(p)\}$.
\end{itemize}
Therefore using remark \ref{resact} we can extend the action to each one of these toric pairs and thanks to the properties of the cycle of integers $(d_1,\dots,d_n)$ every $D$ of length $n=6,8,10$ sits on a smooth rational surface $Y$ that can be obtained by at least one of the toric pairs in our list through a sequence of interior blowups. We can thus repeat the argument used for $n=4$ to conclude the proof.
\end{proof}

\begin{figure}[!h]
\begin{center}
\begin{tikzpicture}[out=20,in=160,relative]
\draw (-1,0) to (1,0) to (1,2) to (1,2) to  (-1,2) to (-1,0);
\node [above] at (0,2) {0};
\node [right] at (1.2,1) {0};
\node [below] at (0,0) {0};
\node [left] at (-1.2,1) {0};
\node at (0,1) {$n=4$};
\draw (5,0) to (7,0) to (7,1) to (7,2) to (5,2) to (5,1) to  (5,0);
\node [above] at (6,2) {-2};
\node [right] at (7.2,1.5) {-1};
\node [right] at (7.2,0.5) {-1};
\node [below] at (6,0) {0};
\node [left] at (4.8,0.5) {-1};
\node [left] at (4.8,1.5) {-1};
\node at (6,1) {$n=6$};

\draw (-1,-5) to (1,-5) to (1,-4) to (1,-3) to (1,-2) to (-1,-2) to (-1,-3) to (-1,-4) to  (-1,-5);
\node [above] at (0,-2) {-2};
\node [right] at (1.2,-2.5) {-1};
\node [right] at (1.2,-3.5) {-2};
\node [right] at (1.2,-4.5) {-1};
\node [below] at (0,-5) {-2};
\node [left] at (-1.2,-4.5) {-1};
\node [left] at (-1.2,-3.5) {-2};
\node [left] at (-1.2,-2.5) {-1};
\node at (0,-3.5) {$n=8$};
\draw (5,-6) to (7,-6) to (7,-5) to (7,-4) to (7,-3) to (7,-2) to (5,-2) to (5,-3) to (5,-4) to (5,-5) to  (5,-6);
\node [above] at (6,-2) {-2};
\node [right] at (7.2,-2.5) {-3};
\node [right] at (7.2,-3.5) {-1};
\node [right] at (7.2,-4.5) {-2};
\node [right] at (7.2,-5.5) {-2};
\node [below] at (6,-6) {0};
\node [left] at (4.8,-5.5) {-2};
\node [left] at (4.8,-4.5) {-2};
\node [left] at (4.8,-3.5) {-1};
\node [left] at (4.8,-2.5) {-3};
\node at (6,-4) {\footnotesize{$n=10(i)$}};

\draw (-1,-12) to (1,-12) to (1,-11) to (1,-10) to (1,-9) to (1,-8) to (-1,-8) to (-1,-9) to (-1,-10) to (-1,-11) to  (-1,-12);
\node [above] at (0,-8) {-2};
\node [right] at (1.2,-8.5) {-1};
\node [right] at (1.2,-9.5) {-3};
\node [right] at (1.2,-10.5) {-1};
\node [right] at (1.2,-11.5) {-2};
\node [below] at (0,-12) {-2};
\node [left] at (-1.2,-11.5) {-2};
\node [left] at (-1.2,-10.5) {-1};
\node [left] at (-1.2,-9.5) {-3};
\node [left] at (-1.2,-8.5) {-1};
\node at (0,-10) {\footnotesize{$n=10(ii)$}};
\draw (5,-12) to (7,-12) to (7,-11) to (7,-10) to (7,-9) to (7,-8) to (5,-8) to (5,-9) to (5,-10) to (5,-11) to  (5,-12);
\node [above] at (6,-8) {-4};
\node [right] at (7.2,-8.5) {-1};
\node [right] at (7.2,-9.5) {-2};
\node [right] at (7.2,-10.5) {-2};
\node [right] at (7.2,-11.5) {-1};
\node [below] at (6,-12) {-2};
\node [left] at (4.8,-11.5) {-1};
\node [left] at (4.8,-10.5) {-2};
\node [left] at (4.8,-9.5) {-2};
\node [left] at (4.8,-8.5) {-1};
\node at (6,-10) {\footnotesize{$n=10(iii)$}};

\end{tikzpicture}
\end{center}
\caption{Cycles of the toric pairs}\label{tp}
\end{figure}
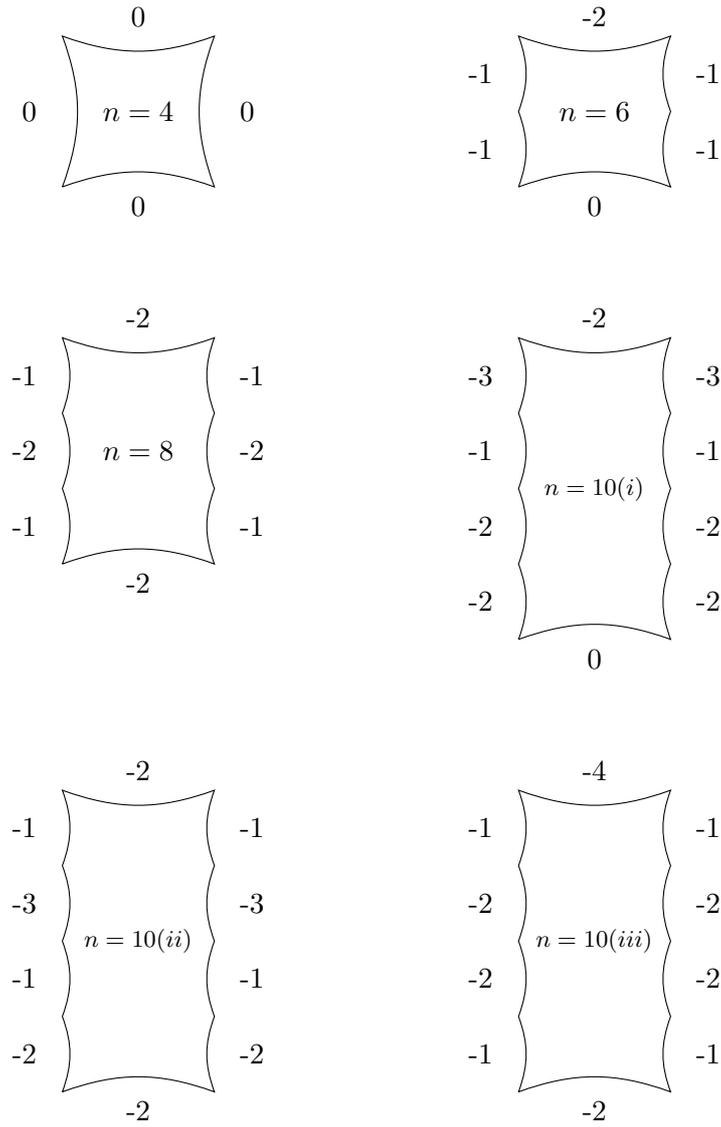

\section{Deformation theory}

Let $\px$ be a cusp singularity and $(p'\in X')$ be its dual. Let $\pi_X:\xres\rightarrow X$ and $\pi_{X'}:\widetilde{X}'\rightarrow X$ be their respective minimal resolutions with $E=\pi_X^{-1}(p)$ and $D=\pi_{X'}^{-1}(p')$. Looijenga proposed a conjecture (that now has been completely proved, see \cite{L81} and \cite{GHK15a}) which gives a sufficient and necessary condition for smoothability of cusp singularities.
\begin{theo}\label{looij}
A cusp singularity $\px$ is smoothable if and only if the dual cycle $D$ sits as an anticanonical divisor on a smooth rational surface.
\end{theo}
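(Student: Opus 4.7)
The theorem has two implications that call for very different strategies.

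For the necessity direction (Looijenga's original contribution), the plan is topological in nature. Assuming $\px$ admits a smoothing $\mathcal X \to \Delta$ with Milnor fiber $M$, the strategy is to produce the smooth rational surface $Y \supset D$ by suitably compactifying $M$. Concretely, I would first recall that the link of a cusp is a torus bundle over $S^1$ whose monodromy is (up to conjugacy) the hyperbolic matrix $A$ in $\sln$ associated to $\px$ as in Section \ref{intro}, and show, using the semistable reduction of the smoothing together with the Lefschetz-type vanishing cycle computation, that $M$ is diffeomorphic to the complement of a small tubular neighborhood of an anticanonical cycle of rational curves whose cycle of integers equals that of the dual cusp $D$ (computed via Remark \ref{dualformula}). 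Once such a topological partial compactification $M \hookrightarrow \bar M$ is in place, one upgrades $\bar M$ to a smooth \emph{projective} compactification $Y$ with $D \in |-K_Y|$ by invoking the rationality of the Milnor fiber together with a careful analysis of the integral affine structure on the boundary.

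For the sufficiency direction (due to Gross--Hacking--Keel) the approach is algebraic and uses mirror symmetry for log Calabi--Yau surfaces. Given $\yd$ with $D$ dual to $E$, I would proceed as follows. First, associate to $\yd$ its tropicalization $\bsig$, an integral affine manifold with singularities homeomorphic to $\R^2$ whose fan near infinity records the cycle of self-intersections of $D$. Second, construct the canonical consistent scattering diagram $\mathfrak D$ on $B$, order by order in the monoid $P \subset \NE(Y)^{\vee}$ of effective curve classes, and extract from $\mathfrak D$ the theta functions $\vartheta_q$ indexed by $q \in B(\z)$ via the broken-line formalism. Third, use the $\vartheta_q$'s as a $\C[P]$-module basis of a flat algebra $R$, so that $\operatorname{Spec} R \to \specP$ is a flat family whose general fiber is a smooth affine log Calabi--Yau and whose fiber over a generic point of the toric stratum dual to the cone $C'$ associated to $E$ acquires the cusp $\px$. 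Finally, restricting this family to a suitable one-parameter disc $\C \hookrightarrow \specP$ transverse to that stratum yields the desired smoothing of $\px$.

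The principal obstacle, and the part that occupies most of \cite{GHK15a}, lies in the sufficiency direction: establishing consistency of $\mathfrak D$ to all orders, together with the identification of the central fiber. Consistency requires an inductive wall-by-wall correction argument, with each step dictated by vanishing of commutators of wall-crossing automorphisms in the tropical vertex group, and it is here that the assumption that $Y$ is a genuine smooth rational surface (as opposed to merely a tropical datum) enters through enumerative geometry of rational curves on $\yd$. The identification of the central fiber with the germ of the cusp $\px$ dual to $D$ then comes from analyzing which theta functions survive the specialization and matching the induced ring structure with the Hirzebruch quotient presentation of $\px$ recalled in Section \ref{intro}. The necessity direction, by comparison, is largely topological once the diffeomorphism type of $M$ is pinned down, and its main difficulty is algebraizing the smooth compactification.
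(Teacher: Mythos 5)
This theorem is not proved in the paper at all: it is quoted as a known result, with the necessity direction attributed to Looijenga \cite{L81} and the sufficiency direction to Gross--Hacking--Keel \cite{GHK15a}, so there is no internal proof to compare your attempt against. Judged on its own merits, your sketch of the sufficiency direction is a fair high-level account of the GHK construction (tropicalization, consistent scattering diagram, theta functions, flat family over $\specP$, restriction to a disc transverse to the boundary stratum), and you correctly locate the hard parts in the consistency of $\mathfrak D$ and the identification of the central fibre.

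The necessity direction, however, contains a genuine gap. Your plan is to pin down the diffeomorphism type of the Milnor fibre $M$, glue in a neighbourhood of the dual cycle to get a closed $4$-manifold $\bar M$, and then ``upgrade $\bar M$ to a smooth projective compactification $Y$ with $D\in|-K_Y|$.'' There is no mechanism in your argument for endowing this topological compactification with a complex (let alone projective) structure: the boundary identification $\partial M\cong -\partial N(D)$ is only a diffeomorphism of torus bundles, and a complex structure on $M$ does not extend across an arbitrary smooth gluing. Moreover, the claim that $M$ is diffeomorphic to the complement of a tubular neighbourhood of an anticanonical cycle on a rational surface is essentially equivalent to the conclusion you are trying to prove, so taking it as an intermediate step is close to circular. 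Looijenga's actual argument avoids this by working holomorphically from the start: both $E$ and its dual $D$ sit on a compact complex surface (the Inoue--Hirzebruch surface), one contracts $E$ to obtain a compact surface $V$ containing $D$ and carrying the cusp $p$, one globalizes the given smoothing of the germ $\px$ to a deformation of $V$ that is locally trivial near $D$ (this requires controlling the local-to-global obstruction), and then the general fibre is automatically a compact complex surface containing $D$ as an anticanonical cycle, whose rationality follows from the classification of surfaces. If you want to repair your write-up, you should replace the topological compactification step by this globalization argument.
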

The aim of this section is to provide a similar result for cusp singularities that admit an antisymplectic involution that is fixed point free on $X\setminus\{p\}$ and therefore a $\zz$-action. The following conjecture, modeled on theorem \ref{looij}, would give necessary and sufficient conditions for any of these cusp singularities to be equivariantly smoothable.
\begin{conj}\label{mainc}
Let $\px$ be a cusp singularity equipped with an antisymplectic involution $\iota$ that is fixed point free on $X\setminus\{p\}$. Then $p\in X$ admits an equivariant smoothing with respect to the $\zz$-action induced by $\iota$ if and only if the dual cycle $D$ sits as an anticanonical divisor on a smooth rational surface $Y$ which admits an antisymplectic involution $j$ that is fixed point free on $X\setminus D$ and extends the one induced on $D$ by $\iota$.
\end{conj}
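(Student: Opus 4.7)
The sufficiency half of the conjecture is exactly Theorem \ref{suff}, so the entire burden is on the necessity: given an equivariant smoothing of $\px$, one must produce a Looijenga pair $\yd$ carrying an antisymplectic involution fixed point free away from $D$ and compatible with $\iota|_D$. The plan is to combine the non-equivariant Looijenga-GHK construction with the equivariant Torelli theorem proved in Theorem \ref{torelliinv} and Proposition \ref{modulispaceinv}.

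The first step is setup. An equivariant smoothing $\pi: \mathcal{X}\to\Delta$ of $\px$ comes with an involution $\tilde\iota:\mathcal{X}\to\mathcal{X}$ commuting with $\pi$; restriction to a smooth fibre gives a free antisymplectic involution of the Milnor fibre $M$. By Theorem \ref{looij} (the non-equivariant direction, proved by Gross-Hacking-Keel), there exists a Looijenga pair $\yd$ with $D$ the dual cycle, and their construction moreover identifies $M$ with $U := Y\setminus D$ compatibly with the integral mixed Hodge structure on $H^2$. Transporting $\tilde\iota|_M$ through this identification yields a diffeomorphism of $U$ which is antisymplectic because the holomorphic 2-form on $U$ is unique up to scale.

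The second step is to promote this topological involution to a holomorphic one on a pair in the same deformation family, using Torelli. From $\tilde\iota$ and the reflection $\sigma$ induced by $\iota$ on $E$ (hence on $D$, by Remark \ref{symmetricdual}) one manufactures a lattice involution $\theta$ of $\Pic(Y)$: on the classes $[D_i]$ it must act as $\sigma$, and on the orthogonal sublattice $\Lambda\yd$ it is dictated by the action of $\tilde\iota$ on $H^2(M)$ under the GHK identification. One then checks that $\theta$ preserves the nef cone --- this is the content of the equivariance of effective cycles coming from vanishing cycles of the equivariant smoothing. By Proposition \ref{modulispaceinv}, the locus of period points producing a Looijenga pair admitting such an involution is cut out inside the moduli space $T = \Hom(\Lambda,\C^*)$ by the condition $\phi\circ\theta = \phi^{-1}$, so it remains to show that the specific period point $\phi_Y$ produced from the equivariant smoothing lies in this locus. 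This should be forced by the fact that the limiting mixed Hodge structure of an equivariant smoothing is itself $\zz$-equivariant, so the character on $\Lambda$ extracted by GHK inherits the symmetry exchanging $q$ and $\theta(q)$ with inversion.

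The main obstacle is exactly this last compatibility: extracting from the $\zz$-action on the smoothing the precise Hodge-theoretic constraint $\phi_Y\circ\theta = \phi_Y^{-1}$ on the mirror period point. Concretely this amounts to carrying out the GHK mirror construction equivariantly --- the canonical scattering diagram built from $\px$ must be shown invariant under the involution induced by $\iota$ on the tropical data of the cusp, so that the theta functions assembling $\yd$ respect the $\zz$-symmetry and descend to produce the involution $j$. The motivating calculation for the degree-eight elliptic case in Subsection \ref{ellsect} (where $(Y_i,D)$ does, and $(Y_{ii},D)$ does not, admit the action, matched on the smoothing side by a unique allowed Milnor fibre) already exhibits this compatibility concretely, and I would expect a systematic proof to proceed by interpreting it as the equivariance of the integral affine structure on the dual intersection complex of the smoothing under $\tilde\iota$.
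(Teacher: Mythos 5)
The statement you are addressing is stated in the paper as a \emph{conjecture}, and the paper itself proves only one direction: the sufficiency (existence of a pair $\yd$ with the required involution implies equivariant smoothability), which is Theorem \ref{suff}, carried out in subsection \ref{maincproof} by running the Gross--Hacking--Keel construction equivariantly. You correctly defer that half to Theorem \ref{suff}. The necessity direction, on which you concentrate, is left open in the paper --- the author states explicitly that no full proof is available, and Proposition \ref{n12smooth} with Table \ref{n12cusps} lists exactly the cases that the (unproved) necessity would promote to non-smoothability statements. So there is no proof in the paper against which your necessity argument can be checked; it must stand on its own, and as written it does not.

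Two concrete gaps. First, your opening identification of the Milnor fibre $M$ of the equivariant smoothing with $U=Y\setminus D$, ``compatibly with the integral mixed Hodge structure,'' is not something the GHK construction hands you: in the GHK picture $M$ and $U$ sit on opposite sides of a mirror correspondence, and Theorem \ref{torelliinv} and Proposition \ref{modulispaceinv} live entirely on the $(Y,D)$ side. The paper's own simple elliptic discussion in subsection \ref{ellsect} shows how delicate this is --- there the Milnor fibres $M_i=\pl\times\pl\setminus E$ and $M_{ii}=\mathbb F_1\setminus E$ are \emph{mirror} to, and not diffeomorphic to, the complements $U_i=Y_i\setminus D$ and $U_{ii}=Y_{ii}\setminus D$. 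For cusps the identification of $M$ with $Y\setminus D$ comes from Looijenga's globalization of the smoothing to a deformation of the Inoue--Hirzebruch surface, and making \emph{that} step equivariant (including checking that the resulting involution on the general fibre is free away from $D$, given that $\iota$ has fixed points on the exceptional cycle of the minimal resolution) is unaddressed. Second, even granting a candidate lattice involution $\theta$, all three hypotheses needed to invoke Theorem \ref{torelliinv} --- $\theta([D_i])=[D_{\sigma(i)}]$, $\theta(\Nef Y)=\Nef Y$, and $\phi_Y\circ\theta=\phi_Y^{-1}$ --- are asserted rather than derived from the existence of the equivariant smoothing; you flag the period condition yourself as ``the main obstacle.'' Note also that Proposition \ref{modulispaceinv} only controls the generic locus $T^{\textnormal{gen}}$, so one would additionally have to rule out the period point landing on a root hyperplane (i.e.\ the pair acquiring internal $(-2)$-curves). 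Until these compatibilities are established, what you have is a plausible programme for the open half of the conjecture, not a proof.
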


Even though we strongly believe that this conjecture is true, as for now, we do not have a full proof of this result, but we have a proof of its sufficiency.

\subsection{Proof of the sufficiency of Conjecture \ref{mainc}}\label{maincproof}
For the reminder of the section we will assume that the multiplicity of $\px$ is greater or equal to four. If it is equal to two, then the cusp embeds as an hypersurface in $\C^3$, so the sufficiency of \ref{mainc} can be proved by hand (cfr. proposition \ref{n10def}). We will use some of the results contained in \cite{GHK15a} to prove the following. 

Let $\px$ be a smoothable cusp singularity, $\iota$ an antisymplectic involution defined on it which is free on $X\setminus p$ and $\sigma$ the reflection induced by it on $E$. By theorem \ref{dualact} the $\zz$-action on $\px$ gives an action on the dual cusp and an associated reflection $\sigma'$ on the cycle $D$. Theorem \ref{looij} states that there exists a smooth rational surface $Y$ containing $D$ as an anticanonical divisor: suppose this Looijenga pair $\yd$ admits an antisymplectic involution $j$ which agrees with $\sigma'$ on $D$. Then the deformation family described in \cite{GHK15a} gives an equivariant smoothing of the cusp singularity $\px$.

We start by fixing some notation. Let $\bsig$ be the tropicalization of $\yd$ (cfr. \cite{GHK15a}, subsection 1.2), where $B$ is an integral affine manifold with singularities and $\Sigma$ is a decomposition of $B$ into two dimensional cones $\{\sigma_{i,i+1}\}$ generated by rays $\rho_i,\rho_{i+1}$. Let $f:Y\rightarrow Y'$ be the contraction of $D$ to the cusp singularity $q\in Y'$. The involution $j$ on $\yd$ induces an involution, namely $j^*$, on $\Pic \ Y$ as well as one on $A_1(Y,\R)$ and an involution $\theta$ on $(B,\Sigma)$.

Now, there exists (see \cite{GHK15a}, subsection 7.2) $L$, a nef divisor such that
$$\NE(Y)_{\R_{\geq 0}}\cap L^\perp=\langle D_1,\dots,D_n\rangle_{\R_{\geq 0}}$$
Note that, possibly replacing $L$ with $L'=L + j^* L$, we can always assume it is nef and invariant under the action of $\alpha$. Indeed, given that $j^*$ permutes the classes $[D_1],\dots,[D_n]$, then $\langle D_1,\dots,D_n \rangle\subset (j^* L)^\perp$ and $\NE(Y)_{\R_{\geq 0}}\cap (L+j^* L)^\perp=\langle D_1,\dots,D_n\rangle_{\R_{\geq 0}}$. Let $\sigma\subset A_1(Y,\R)$ be a strictly convex rational polyhedral cone containing $\NE(Y)$. Set $\sigma_P=\sigma\cap j^*(\sigma)$. Then $\sigma_P$ is invariant under $j^*$, it still contains $\NE(Y)$ and, possibly intersecting $\sigma_P$ with the halfspace of curves $\beta$ such that $\beta\cdot L\geq 0$, we can always assume that $\sigma_{bdy}:=\sigma_P\cap L^\perp$ is a face of $\sigma_P$ and we have that $j^*(\sigma_{bdy})=\sigma_{bdy}$. Let $P=\sigma_P\cap A_1(Y,\z)$ be the toric monoid associated to $\sigma_P$, $\mathfrak m=P\setminus 0$ and $J=P\setminus P\cap L^\perp$. We write $S=\specP$ and $S_I=\specP/I$ for any monomial ideal $I$. 

The map $j^*$ gives an involution on $S$ and $S_I$ for any $j^*$-invariant monomial ideal $I$ defined by 
\begin{equation}\label{baseinv}\alpha: z^\beta \mapsto (-1)^{\beta\cdot (D_n+D_{n/2})}z^{j^*(\beta)}\end{equation}
where $D_n, D_{n/2}$ are the two components of $D$ fixed by $j$, in particular we get
$$\alpha(z^{[D_i]})= \begin{cases}
(-1)^{D_i\cdot D_i}z^{j^*([D_i])}=z^{[D_i]} \quad \textnormal{if} \ i=n/2,n\\
-z^{j^*([D_i])}=-z^{[D_{n-i}]} \quad \textnormal{if} \ i=1,n/2-1,n/2+1,n-1\\
z^{j^*([D_i])}=z^{[D_{n-i}]} \quad \textnormal{otherwise}
\end{cases}$$

\bigskip
\noindent Note that if $n=4$ the third case does not exists and in the second case there is not a minus sign (this observation will carry out through the whole proof). The first step of the proof consists of showing that the family $f_J:X_J\rightarrow S'_J$ of \cite{GHK15a}, theorem 7.5 admits a $\zz$ action. In order to do so, let us define maps $\iota_{i,J}$, for $i=1,\dots,n$ as follows:
\begin{align*}
& \iota_{i,J}: \mathbb A^2_{x_{i-1},x_{i+1}}\times (\mathbb G_m)_{x_i}\times S_J \rightarrow \mathbb A^2_{x_{n-i-1},x_{n-i+1}}\times (\mathbb G_m)_{x_{n-i}}\times S_J \\ 
 & (x_{i-1},x_{i+1},x_i,z_J) \mapsto ((-1)^{\varepsilon(i+1)}x_{i+1},(-1)^{\varepsilon(i-1)}x_{i-1},(-1)^{\varepsilon(i)}x_i,\alpha(z_J))
\end{align*}
where $\varepsilon(i)=1$ if $i \equiv 0 \ \textnormal{mod} \  (n/2)$ and 0 otherwise and indices are meant mod $n$ within the range $1,\dots,n$ when needed. Then the image of each $U_{\rho_i,J}$ under the corresponding involution $\iota_{i,J}$ is $U_{\rho_{n-i},J}$: let $p=(x_{i-1},x_{i+1},x_i,z_J)$ be a point in $U_{\rho_i,J}$, then in coordinates we have
$$\iota_{i,J}(p)=((-1)^{\varepsilon(i+1)}x_{i+1},(-1)^{\varepsilon(i-1)}x_{i-1},(-1)^{\varepsilon(i)}x_i,\alpha(z_J))$$
which satisfies the equation $X_{n-i-1}X_{n-i+1}-z^{[D_{n-i}]}
X_{n-i}^{-D_{n-i}}=0$ defining $U_{\rho_{n-i}}$. Indeed, by substitution we get:
$$(-1)^{\varepsilon(i-1)}(-1)^{\varepsilon(i+1)}x_{i-1}x_{i+1}-\alpha(z^{[D_i]})[(-1)^{\varepsilon(i)}x_i]^{-D^2_i}=0$$
that is always a true statement: if $\varepsilon(i)=1$, then $i$ is either $n/2$ or $n$, but $(-x_i)^{-D^2_i}=x_i^{-D^2_i}$ since $-D^2_i$ is even, while if either $\varepsilon(i-1)=1$ or $\varepsilon(i+1)=1$, then $i\in\{1,n/2-1,n/2+1,n-1\}$ therefore $\alpha(z^{[D_i]})=-z^{[D_{n-i}]}$. If all the exponents are equal to 0, then the equality holds trivially, thus $\iota_{i,J}(p)\in U_{\rho_{n-i},J}$. The other containment also follows, since $\iota_{i,J}$ is an involution. As a consequence, the open analytic subsets $V_{\rho_i,J}$ (see the proof of theorem 7.5 in \cite{GHK15a} for the exact definition of the subsets $V_{\rho_i,J}$) are permuted accordingly, with $\iota_J(V_{\rho_i,J})=V_{\rho_i,J}$ if $i=n/2,n$ and $\iota_J(V_{\rho_i,J})=V_{\rho_{n-i},J}$ otherwise. Moreover the maps $\iota_{i,J}$ agree on the intersections, thus giving an involution $\iota_J$ on $\bigcup_i V_{\rho_i,J}$. Indeed, for $V_{\rho_i,J}\cap V_{\rho_{i+1},J}=(\mathbb G^2_m)_{x_i,x_{i+1}}\times S_J$ we have
$$\iota_{i,J}(x_i,x_{i+1},z_J)=((-1)^{\varepsilon(i)}x_{i},(-1)^{\varepsilon(i+1)}x_{i+1}, \alpha(z_J))=\iota_{i+1,J}(x_i,x_{i+1},z_J)$$
This gives an analytic involution $\iota_J$ on $X^\circ_J=\cup_{\rho\in\Sigma}V_{\rho,J}$ that can be extended to $X_J/S'_J$, since its fibres satisfy Serre's condition $S_2$. The analytic involution obtained through the extension to the singular locus, that we will still denote by $\iota_J$, is compatible with the one given in (\ref{baseinv}) on the base space $S'_J$. By direct computation on the charts $U_{\rho_i,J}$ one can check that this involution is fixed point free on $X^\circ_J$. Besides, the way it is defined on $x_1,\dots,x_n$ determines the way it acts on the exceptional cycle associated to the cusp singularities $s(t)\in X_{J,t}$ (cfr. lemma 7.3 in \cite{GHK15a} and remark \ref{toricdualact}). Thanks to proposition \ref{uniqueinv} the action induced by $\iota_J$ on the cusp singularities $s(t)\in X_{J,t}$ of the general fibres of $f_J$ is exactly the $\zz$ action given by hypothesis on $\px$.

\bigskip
The next step is to consider the thickening of the cusp family as it is presented in \cite{GHK15a}, theorem 7.7.
The involution $\iota_J$ defined on the family $f_J:X_J\rightarrow S'_J$ extends to its thickening $f_I:X_I\rightarrow S'_I$. Indeed the scattering diagram $\mathfrak D$ is invariant with respect to the involution, in the following sense: given a pair $(\mathfrak d, f_{\mathfrak d})\in \mathfrak D$ then the action induced by $\iota_J$ on it is given by $\iota_J\cdot(\mathfrak d, f_{\mathfrak d})=(\theta(\mathfrak d),f_{\mathfrak d}\circ \iota_J)$. We claim that the latter is still contained in $\mathfrak D$.
Indeed, start with considering the functions associated to the rays in $\Sigma$: each $f_{\rho_i}$ is a truncated version of
$$\exp\left [ \sum_\beta (\beta\cdot D_i) N_\beta z^{\beta}x_i^{-\beta\cdot D_i}\right ]$$
where the sum runs over all classes $\beta\in A_1(Y,\z)$ satisfying the property
\begin{equation}\label{betadef}\beta\cdot D_i\neq 0 \ \textnormal{and} \  \beta\cdot D_j=0\  \textnormal{for all} \  j\neq i \end{equation}
and $N_\beta$ is defined in \cite{GHK15a}, definition 3.1. Thus, when applying $\iota_{i,J}$, for each admissible $\beta$ we get
\begin{equation}\label{dfunct}(\beta\cdot D_i) N_\beta (-1)^{\beta\cdot (D_{n/2}+D_n)}z^{j^*(\beta)}(-1)^{\varepsilon(i)\beta\cdot D_i}x_{n-i}^{-\beta\cdot D_i}\end{equation}
If $i=n/2,n$, then $(-1)^{\beta\cdot (D_{n/2}+D_n)}$ becomes $(-1)^{\beta\cdot D_i}$ and $\varepsilon(i)=1$, giving $(-1)^{\varepsilon(i)\beta\cdot D_i}=(-1)^{\beta\cdot D_i}$. Otherwise, $\beta\cdot (D_{n/2}+D_n)=0$ and $\varepsilon(i)=0$, thus we obtain
$$(\ref{dfunct})=\begin{cases}
(\beta\cdot D_i) N_\beta z^{j^*(\beta)}x_{i}^{-\beta\cdot D_i} \quad \textnormal{if} \ i=n/2,n\\ 
(\beta\cdot D_i) N_\beta z^{j^*(\beta)}x_{n-i}^{-\beta\cdot D_i} \quad \textnormal{otherwise}
\end{cases}$$
Since $j$ is an involution, if $\beta$ is a class in $A_1(Y,\z)$, then $j^*\beta\cdot j^*C=\beta\cdot C$ for any curve $C$ in $Y$ intersecting $\beta$ properly. Therefore if $\beta$ satisfies (\ref{betadef}) for some $i$, then $j^*\beta$ is such that (\ref{betadef}) is true with $i$ replaced by $n-i$, and indices considered mod $n$ as usual. Moreover, $N_{j^*\beta}=N_\beta$ because the definition of $N_\beta$ is determined by a moduli space that in turns only depends on the isomorphism class of $(\yd,\beta)$. Besides, f $\gamma$ is a class in $A^1(Y,\z)$ satisfying (\ref{betadef}) for $n-i$, then $\gamma=j^*(\beta)$ for some $\beta$ satisfying (\ref{betadef}) as well, hence 
$$\sum_\beta (\beta\cdot D_i) N_\beta z^{j^*(\beta)}x_{n-i}^{-\beta\cdot D_i}=\sum_\gamma (\gamma\cdot D_i) N_\gamma z^{\gamma}x_{n-i}^{-\gamma\cdot D_i}$$
where the sum runs over the appropriate classes $\beta$ and $\gamma$ respectively. Therefore $f_{\rho_i}\circ \iota_{i,J}=f_{\rho_i}$ for $n/2,n$ and  $f_{\rho_i}\circ \iota_{i,J}=f_{\rho_{n-i}}$ otherwise. Now suppose $f_{\mathfrak d}$ is the function relative to any other ray $\mathfrak d$ of rational slope, with $\mathfrak d$ contained in the cone of $\Sigma$ generated by $\rho_i,\rho_{i+1}$. Then $f_{\mathfrak d}$ is the truncated version of 
$$\exp\left [ \sum_\beta \kappa_\beta N_\beta z^{\beta}x_i^{-a\kappa_\beta}x_{i+1}^{-b\kappa_\beta}\right ]$$
with $\beta$ defined as usual and $a,b$ chosen to satisfy $\mathfrak d=\R_{\geq 0}(a\kappa v_i+b\kappa v_{i+1})$, with $\rho_i=\R_{\geq 0}v_i,\rho_{i+1}=\R_{\geq 0}v_{i+1}$ and $\kappa_\beta$ the positive integer such that $\beta\cdot D_{i}=a\kappa_\beta$ and $\beta\cdot D_{i+1}=b\kappa_\beta$. Therefore, applying $\iota_{i,J}$, for each admissible $\beta$ we obtain
\begin{equation}\label{dfunctrat} \kappa_\beta N_\beta (-1)^{\beta\cdot (D_{n/2}+D_n)}z^{j^*(\beta)}g_i\end{equation}
with
$$g_i=\begin{cases}
(-1)^{\varepsilon(i)a\kappa_\beta+\varepsilon(i+1)b\kappa_\beta}x_{n-i}^{-a\kappa_\beta}x_{n-i-1}^{-b\kappa_\beta} \quad \textnormal{if} \ i\neq n/2,n \ and \ i+1\neq n/2,n \\
(-1)^{\varepsilon(i)a\kappa_\beta+\varepsilon(i+1)b\kappa_\beta}x_{i}^{-a\kappa_\beta}x_{i-1}^{-b\kappa_\beta} \quad \textnormal{if} \ i= n/2,n  \\
(-1)^{\varepsilon(i)a\kappa_\beta+\varepsilon(i+1)b\kappa_\beta}x_{n-i}^{-a\kappa_\beta}x_{i+1}^{-b\kappa_\beta} \quad \textnormal{if} \ i+1= n/2,n \\
\end{cases}$$
If $i=n/2$ or $i=n$, then $\varepsilon(i)=1$ while $\varepsilon(i+1)=0$, therefore reasoning as we did above, we get $(-1)^{\beta\cdot D_{i}}(-1)^{a\kappa_\beta}=1$, since $\beta\cdot D_{i}=a\kappa_\beta$. Similarly, all negative signs cancel if $i+1=n/2,n$. Thus we get
$$(\ref{dfunctrat})=
\begin{cases}
\kappa_\beta N_\beta z^{j^*(\beta)} x_{n-i}^{-a\kappa_\beta}x_{n-i-1}^{-b\kappa_\beta} \quad \textnormal{if} \ i\neq n/2,n \ and \ i+1\neq n/2,n \\
\kappa_\beta N_\beta z^{j^*(\beta)}x_{i}^{-a\kappa_\beta}x_{i-1}^{-b\kappa_\beta} \quad \textnormal{if} \ i= n/2,n  \\
\kappa_\beta N_\beta z^{j^*(\beta)}x_{n-i}^{-a\kappa_\beta}x_{i+1}^{-b\kappa_\beta} \quad \textnormal{if} \ i+1= n/2,n \\
\end{cases}$$
Arguing as before we can now show that $f_{\mathfrak d}\circ \iota_{i,J}=f_{\theta(\mathfrak d)}$ as needed: this concludes the proof that the scattering diagram $\mathfrak D$ is $\theta$-invariant.
The hypersurfaces $U_{\rho_i,I}$ are defined by the equations 
$$x_{i-1}x_{i+1}-z^{[D_i]}x_i^{-D^2_i}f_{\rho_i}=0$$
thus the analysis above implies that the maps $\iota_{\rho_i,J}$ extend to each $V_{i,I}\subset U_{\rho_i,I}$ and respect all the gluing isomorphisms, giving a new involution $\iota_I$ on $X_I/S'_I$.

\bigskip
Finally, we need to describe the subspace $S'\subset S$ of points fixed by the involution $\alpha$. In order to do this let us recap our notation. Let $T$ be the algebraic torus contained in the affine toric variety $S$ and recall that $S=\textnormal{Spec} \ \C[\sigma_P \cap M]$, where $M= \textnormal{A}_1(Y)$ and the dual lattice $N=\textnormal{Pic}\ Y$. 
Since by assumption the involution $j$ defined on the pair $\yd$ is fixed point free away from $D$, then we can use theorem \ref{desact} to get a precise description of the pullback map $j^*$ on the Picard group of $Y$. Indeed, the theorem states that there exists a sequence of maps 
$$\yd \xrightarrow{\psi_1}(Y_1,D_1)\xrightarrow{\psi_2} \cdots \xrightarrow{\psi_m} (\pl\times\pl,\Delta)$$
where each $\psi_i$ corresponds to the blowup of two points on the anticanonical divisor of the pair $(Y_i,D_i)$ which belong to the same orbit with respect to the action induced on $(Y_i,D_i)$ by the original involution defined on $\yd$. We may always assume that the maps $\psi_1,\dots,\psi_t$ are pairs of interior blowups, while the remaining ones are pairs of toric blowups, thus implying that $(Y_t,D_t)$ is an equivariant toric model for $\yd$ (see remark \ref{equivtoricmod}). This sequence of maps induces on $(\pl\times\pl,\Delta)$ the involution $j_0$ defined as $(z,w)\mapsto (z^{-1},-w)$ (cfr. remark \ref{baseact}). Observe that the pullback of $j_0$ acts trivially on the Picard group of $\pl\times\pl$ and that a basis for $\textnormal{Pic}(Y)$ is given by the pullbacks $F_1,F_2$ of the two generators of $\textnormal{Pic}(\pl\times\pl)$, by the classes of the divisors $D_1,\dots D_n$ excluding the strict transforms of the four boundary divisors of $\pl\times\pl$ and by those of the exceptional divisors $\mathcal E=\{E_{i,j},E'_{i,\sigma(j)}\}_{i=1,\dots,t}$, of the maps $\psi_1,\dots,\psi_t$ or, to be more precise, of their strict transforms in $Y$. Here the index $j$ refers to the divisor $D_j$ intersected by $E_{i,j}$. Since the involution $j$ fixes $F_1,F_2$, maps each $E_{i,j}$ to $E'_{i,\sigma(j)}$ (and viceversa), and each $D_i$ to $D_{\sigma(i)}$ (and viceversa) the involution induced by it on the Picard group of $Y$ corresponds to the block diagonal matrix

$$\begin{pmatrix}1 & 0 &   &    &           &   & 0\\
					   0 & 1 &   &    &           &   &  \\
					     &    & 0 & 1 &           &   &  \\
					     &    & 1 & 0 &           &   &  \\
					     &    &    &    & \ddots &   &  \\
					     &    &    &    &          & 0 & 1\\
					   0 &    &    &    &         & 1 & 0\end{pmatrix}$$
Furthermore, let  $(z_1,z_2,w_1,\dots,w_{2k},u_1,\dots, u_{2l})$ be the coordinates of the torus $T$ given by this basis (identifying $\textnormal{Pic }Y$ and $A_1(Y)$, the characters of $T$), arranged so that $z_1,z_2$ correspond to $F_1,F_2$, the pairs $(w_1,w_2),\dots,(w_{2k-1},w_{2k})$ are of the form $z^{[E_{i,j}]},z^{[E'_{i,\sigma(j)}]}$ with $j=n/2,n$ or $z^{[D_j]},z^{[D_{\sigma(j)}]}$ where $D_j$ meet $D_{n/2},D_n$ and the pairs  $(u_1,u_2),\dots,(u_{2l-1},u_{2l})$ correspond to the divisors in $E_{i,j},E'_{i,\sigma(j)}$ with $j\neq n/2,n$ or to divisors $D_j,D_{\sigma(j)}$ which do not intersect $D_{n/2}$, $D_n$. If we call still $\alpha$ (as in \ref{baseinv}) the involution defined on $T$ by the formula 
$$\alpha: z^\beta \mapsto (-1)^{\beta\cdot ([D_n]+[D_{n/2}])}z^{j^*(\beta)}$$
then in coordinates we get that this involution maps $(z_1,z_2,w_1,\dots,w_{2k},u_1,\dots,u_{2l})$ to $(z_1,z_2,-w_2,-w_1,\dots,-w_{2k},-w_{2k-1},u_2,u_1,\dots,u_{2l},u_{2l-1})$. Therefore the fixed locus for $\alpha$ in $T$ is described by the equations $w_{2i}=-w_{2i-1}$ for $i=1,\dots,k$ and $u_{2i}=u_{2i-1}$ for $i=1,\dots,l$ and it is the translate of a subtorus $T'\subset T$ of dimension $2+k+l$ by a point of type $(\pm1,\dots,\pm1)$, where $T'=N'\otimes \C^*$ and $N'\subset N$ is the sublattice in $N$ fixed by the involution. Finally let $\nu=\sigma_P^*$ be the cone in $N_{\R}$ associated to $S$ and $\nu'\subset \nu$ be defined as $\nu\cap N'_{\R}$ and similarly $\tau'=\tau\cap N'_{\R}$, where $\tau=\nu\cap\sigma_{bdy}^{\perp}$, the face of $\nu$ corresponding to the face $\sigma_{bdy}$ of $\sigma_P$. Then, by standard toric geometry arguments, the closure of $T'$ in $S$ is the toric variety $S'$ which corresponds to $\nu'$. $S'$ meets the interior of the stratum $Z$ corresponding to $\tau$ when $N'_{\R}$ meets the relative interior of $\tau$ and in this case $\overline{S'\cap \textnormal{Int}(Z)}$ automatically contains $0\in Z$. Observe that $\tau$ and $N'_{\R}$ are invariant under the $\zz$-action induced by $j^*$. We claim that $N'_{\R}$ intersects the interior of $\tau$ non trivially. Indeed, let $V:=\textnormal{span}_\R\langle\tau\rangle$, then $\tau$ is full dimensional in $V$ and this vector space decomposes in eigenspaces for $j^*$ as $V^+\oplus V^-$, with corresponding eigenvalues $1,-1$. Let $p:V\rightarrow V^-$ be the projection onto the second eigenspace and consider the restriction of the involution to $V^-$: here $j^*$ acts as $-$Id and $p(N'_\R\cap V)=0$. The image $\tau^-:=p(\tau)$ under $p$ of $\tau$ is full dimensional and invariant under the action of $-$Id. It follows that 0 must be contained in the interior of $\tau^-$, and therefore $N'_\R$ must intersect the interior of $\tau$ non trivially as well.

In order to be sure that $S'$ provides a smoothing of the cusp singularity dual to $D$, thanks to definition 4.2 and lemma 7.15 in \cite{GHK15a}, it suffices to consider the Gross-Siebert locus $\hat S$ contained in $S'$ (cfr. definition 3.14 in \cite{GHK15a}). Indeed, thanks to lemma 7.15 a smoothing of the cusp exists if and only if there exists a smoothing in a formal neighborhood of the Gross-Siebert locus. The map $\pi=\psi_t\circ\dots\circ\psi_1$ determines a face of $\textnormal{Nef} \ Y$, namely $\pi^*(\textnormal{Nef} \ Y_t)$, or equivalently a face of $\textnormal{NE}\ Y$. We may always assume that there is a corresponding face $F$ of $\nu$ that is $\zz$-invariant (because $\yd$ has an equivariant toric model, see remark \ref{equivtoricmod}). On $\hat S$ we have coordinates $\{z^{[E_{i,j}]},z^{[E'_{i,\sigma(j)}]}\}_{i=1,\dots,t}$ and $j\in\{1,\dots,n\}$, while the fixed locus in $\hat S$ is described by the equations 
\begin{equation}\begin{cases}\label{eqgs}
z^{[E_{i,j}]}=-z^{[E'_{i,\sigma(j)}]} \qquad \textnormal{if} \ j=n/2,n \\
z^{[E_{i,j}]}=z^{[E'_{i,\sigma(j)}]} \qquad \textnormal{otherwise}
\end{cases}\end{equation}
therefore it has coordinates $z^{[E_{i,j}]}$ with $i=1,\dots,t$ and $j\in\{1,\dots,n\}$. In general if $\{z_{i,j}\}$ are the coordinates of the Gross-Siebert locus corresponding to the exceptional divisors meeting $D_j$, for the smoothness argument we must have $z_{i,j}\neq z_{k,j}$ for all $j$ and for all $i\neq k$. In our case, because of (\ref{eqgs}) and the way $\sigma$ is defined, this reduces to check that
$z^{[E_{i,j}]}\neq{z^{[E'_{k,j}]}}$ for $j=n/2,n$ and for all $i\neq k$. This is always true, because of the first equation in (\ref{eqgs}). Now if $I$ is an $\alpha$-invariant monomial ideal, then we can consider $S'_I\subset S_I$ and, by restriction, we get an equivariant family $f':X'_I\rightarrow S'_I$. From this family, using the techniques of theorem 7.13 in \cite{GHK15a} we finally obtain an equivariant smoothing of the cusp singularity $\px$.

\subsection{Cusps with embedding dimension $n\leq 12$}
We describe what is known about the conjecture \ref{mainc} for cusp singularities of embedding dimension $n\leq 12$. It can be proved that for $n\leq 10$ it is always possible to find an equivariant smoothing of the cusp $\px$.
\begin{prop}\label{n10def}
Every germ of a symmetric cusp singularity of embedding dimension $n\leq 10$ is equivariantly smoothable.
\end{prop}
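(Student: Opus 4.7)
The plan is to split into cases based on the multiplicity $m:=-E^2$ of $\px$. Because $E$ is symmetric, the two $\sigma$-fixed components $E_n$ and $E_{n/2}$ carry even self-intersections and the remaining components come in $\sigma$-pairs, so $\sum_i e_i$, and hence $m$, is always even. Combined with the hypothesis that the embedding dimension $\max(3,m)\leq 10$, this forces $m\in\{2,4,6,8,10\}$; in each case the dual cycle $D$ has length exactly $m$.

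The case $m\in\{4,6,8,10\}$ will follow formally from the earlier sections. By Remark \ref{symmetricdual}, $D$ is then a symmetric cusp of length between $4$ and $10$, so Proposition \ref{n10} produces a negative definite Looijenga pair $\yd$ equipped with an antisymplectic involution $j$ that is fixed point free on $Y\setminus D$. The reflection on $D$ determined by $j$ is the unique dihedral reflection compatible with the symmetric structure of $D$ up to relabeling, so it matches the reflection induced on $D$ by $\iota$ via Theorem \ref{dualact} and Remark \ref{toricdualact}. Theorem \ref{suff}, whose proof is carried out in Section \ref{maincproof}, then immediately yields an equivariant smoothing of $\px$.

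The remaining case $m=2$ must be handled by hand, since both Proposition \ref{n10} and the argument of Section \ref{maincproof} break down (the latter explicitly assumes multiplicity at least four). Here $\px$ is a hypersurface singularity $\{f=0\}\subset\C^3$, the symmetric cycles $E$ with $-E^2=2$ are easily enumerated (they include the self-dual cusp $(4,2)$, the cusp $(3,2,3,2)$, and their obvious generalizations), and $\iota$ can always be arranged to act linearly in suitable coordinates. The strategy is: write down an explicit smoothing $f+tg$, which exists by Theorem \ref{looij}; form its symmetrization $\tfrac12(g+\iota^*g)$ to obtain an $\iota$-equivariant first-order deformation; and verify that the corresponding family $f+t\cdot\tfrac12(g+\iota^*g)$ is again a smoothing. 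This reduces to a direct computation on the $\iota$-invariant part of the versal deformation space of each of the (finitely many, up to the obvious dualities) multiplicity-two symmetric cycles.

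The main obstacle will be the final verification in the $m=2$ case: a priori, symmetrization could destroy the smoothing direction, so one must show that the $\iota$-invariant subspace of the first-order deformations of $\px$ meets a smoothing component. I expect this to be a manageable case-by-case check, using the explicit equations of multiplicity-two cusps, but it is the only part of the argument that is not fully formal from the earlier machinery.
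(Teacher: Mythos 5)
Your reduction of the cases with multiplicity $4\leq -E^2\leq 10$ is exactly the paper's argument: pass to the dual cycle via Theorem \ref{dualact}, invoke Proposition \ref{n10} to produce a Looijenga pair with a free antisymplectic involution, and conclude by the sufficiency argument of subsection \ref{maincproof}. That part is fine (the parity argument showing $-E^2$ is even for a symmetric cusp is correct, and is why the multiplicity-$2$ case is the only one escaping Proposition \ref{n10}).

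The genuine gap is the multiplicity-two case, which you leave as an acknowledged open verification. Your plan --- take an arbitrary smoothing direction $g$ and symmetrize to $\tfrac12(g+\iota^*g)$ --- runs head-on into the obstruction you yourself name: nothing guarantees that the $\iota$-invariant part of $T^1$ meets a smoothing component, and you give no mechanism to check it. The paper closes this case with no symmetrization at all, by exploiting the explicit normal form: a multiplicity-two cusp is $\{z^2+x^p+y^q+xyz=0\}\subset\mathbb A^3$ with $\tfrac12+\tfrac1p+\tfrac1q<1$, and completing the square in $z$ puts it in the form $\{z^2+x^p+y^q-\tfrac14x^2y^2=0\}$. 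Symmetry of the cusp forces $p$ and $q$ to be even, so this equation is already invariant under the antisymplectic involution $(x,y,z)\mapsto(-x,-y,-z)$, and the one-parameter family $z^2+x^p+y^q-\tfrac14x^2y^2+t=0$ is a smoothing in which the deformation parameter is the \emph{constant} monomial, manifestly $\iota$-invariant. So the invariant smoothing direction is exhibited directly rather than obtained by averaging, and no case-by-case check over cycles is needed. To repair your proposal you should replace the symmetrization step by this explicit computation (or at minimum carry out the verification you defer); as written, the $m=2$ case is a plan, not a proof.
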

\begin{proof}
It can be checked that symmetric cusp singularities of multiplicity 2 are always equivariantly smoothable, since they embed in $\mathbb A^3$ as hypersurfaces and have an explicit description of their smoothings. Indeed, the equation of a cusp $\px$ of multiplicity 2 is given by $(z^2+x^p+y^q+xyz=0)\subset \mathbb A^3$ with $\frac{1}{2}+\frac{1}{p}+\frac{1}{q}<1$; we can change coordinates so that it is given by $(z^2+x^p+y^q-\frac{1}{4}x^2y^2=0)\subset \mathbb A^3$. Note that since the cusp is symmetric, then $p,q$ have to be even. Now, the involution on $\px$ is given by $(x,y,z)\mapsto(-x,-y,-z)$: this is the right involution since it preserves the cusp and it is antisymplectic, as it can be checked on the minimal resolution.  Finally, a smoothing of $\px$ can be described as the one parameter family $(z^2+x^p+y^q-\frac{1}{4}x^2y^2+t=0)\subset \mathbb A^3\times \mathbb A^1_t$. This family gives an equivariant smoothing with respect to the extended involution given by $(x,y,z,t)\mapsto(-x,-y,-z,t)$. Now suppose $\px$ is a cusp singularity of embedding dimension $4\leq n \leq 10$. Theorem \ref{dualact} gives an involution $j$ on the dual cusp $(p'\in X')$ acting freely on $X'\setminus \{p'\}$ that induces a $\zz$-action on its exceptional cycle $D$. Thus the dual cusp $D$ is symmetric and by proposition \ref{n10} there exists a rational surface $Y$ on which $D$ sits as an anticanonical divisor together with an antisymplectic involution $j$ that is fixed point free away from $D$ and we can use the sufficient condition of conjecture \ref{mainc} proved in subsection \ref{maincproof} to find an equivariant smoothing of $\px$.
\end{proof}

\begin{table}[t!]
\begin{center}
\begin{tabular}{ c|c } 

 Cusp singularity & Dual cycle of integers \\ \hline
  $(3,10,3,4)$ & $(3,3,2,2,2,2,2,2,2,3,3,2)$ \\ 
  $(3,8,3,6)$ & $(2,3,3,2,2,2,2,2,3,3,2,2)$ \\
  $(4,8,4,4)$ & $(3,2,3,2,2,2,2,2,3,2,3,2)$ \\
  $(6,4,6,4)$ & $(3,2,2,2,3,2,3,2,2,2,3,2)$ \\
  $(12,3,2,3)$ & $(3,2,2,2,2,2,2,2,2,2,3,4)$ \\
  $(10,4,2,4)$ & $(2,3,2,2,2,2,2,2,2,3,2,4)$ \\
  $(6,2,6,6)$ & $(2,2,2,3,2,2,2,3,2,2,2,4)$ \\
  $(4,7,2,7)$ & $(2,2,2,2,3,2,3,2,2,2,2,4)$ \\
  $(3,3,8,3,3,4)$ & $(3,3,3,2,2,2,2,2,3,3,3,2)$ \\
  $(3,3,6,3,3,6)$ & $(2,3,3,3,2,2,2,3,3,3,2,2)$ \\
  $(3,3,2,3,3,10)$ & $(3,3,2,2,2,2,2,2,2,3,3,4)$ \\
  $(6,3,2,3,6,4)$ & $(3,2,2,2,3,2,3,2,2,2,3,4)$ \\

\end{tabular}
\end{center}\vspace{0.4cm}\caption{Cusp singularities that do not satisfy the statement of proposition \ref{n12smooth}}\label{n12cusps}
\end{table}

To find an example of a cusp that is equipped with an antisymplectic involution $\iota$ but does not admit an equivariant smoothing, we have to look among symmetric cusps with embedding dimension at least equal to 12. In fact we conjecture that $n=12$ is big enough. More precisely, the following result can be proved.

\begin{prop}\label{n12smooth}
All smoothable symmetric cusp singularities $\px$ of embedding dimension $n=12$ are $\zz$-equivariantly smoothable, except possibly for the ones listed below in table \ref{n12cusps}. These cusps correspond to Looijenga pairs that do not admit an antisymplectic involution which is free away from the anticanonical cycle.
\end{prop}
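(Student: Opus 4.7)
The plan is to combine the sufficient condition established in Subsection~\ref{maincproof} with the equivariant toric characterization of Theorem~\ref{desact}, and then to carry out a finite case analysis on symmetric cycles of length $12$. First, by Theorem~\ref{dualact} together with Remark~\ref{symmetricdual}, the dual cycle $D$ of a symmetric cusp $\px$ of embedding dimension $12$ is itself a symmetric cycle of length $12$. Since we are assuming $\px$ is smoothable, Theorem~\ref{looij} guarantees that $D$ sits as an anticanonical divisor on some smooth rational surface. The sufficient condition of Subsection~\ref{maincproof} therefore reduces the proposition to the following two tasks: (a) for each symmetric length-$12$ cycle of integers \emph{not} appearing in Table~\ref{n12cusps}, exhibit a Looijenga pair $\yd$ with $D$ that cycle, together with an antisymplectic involution $j$ fixed point free on $Y\setminus D$ extending the reflection on $D$; and (b) for each cycle listed in Table~\ref{n12cusps}, prove that no such $\yd$ can exist.

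For part (a), I would follow the pattern of Proposition~\ref{n10}. By Theorem~\ref{desact}, producing an equivariant $\yd$ is equivalent to producing a sequence of equivariant blowups starting from $(\pl\times\pl,\Delta)$ endowed with the involution $j_0$ of Remark~\ref{baseact}. I would first enumerate the finitely many symmetric length-$12$ toric Looijenga pairs obtainable from $(\pl\times\pl,\Delta)$ by equivariant corner blowups; these play the role of the list in Figure~\ref{tp} and correspond to the $\zz$-invariant integral polygons in $N_{\mathbb{R}}$ with $12$ boundary lattice points under the linear involution induced by $j_0$. Then, for each symmetric length-$12$ cycle of integers outside Table~\ref{n12cusps}, I would select one of these toric models and exhibit a sequence of equivariant pairs $\{p,j(p)\}$ of interior blowups whose cumulative effect realizes the given cycle. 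Equivariance forces the decrements to the self-intersections to occur symmetrically on pairs $\{D_i, D_{\sigma(i)}\}$, which preserves the condition $d_i = d_{\sigma(i)}$ automatically; and one must additionally check that points fixed by the involution on $D$ can be avoided, which is why the two fixed components must satisfy $d_i$ even.

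For part (b), I would argue by contradiction. If $\yd$ with the required involution existed for one of the twelve listed cycles, Theorem~\ref{desact} would produce an equivariant chain of contractions of disjoint pairs of $(-1)$-curves down to $(\pl\times\pl,\Delta)$. Reversing this chain and factoring through an equivariant toric model as in Remark~\ref{equivtoricmod}, the cycle of $D$ must be reachable from one of the symmetric length-$12$ toric cycles enumerated above by a sequence of equivariant pairs of interior blowups. The obstruction is that such pairs modify self-intersections symmetrically across $\sigma$, and the available toric starting cycles are rigid in the placement of their components of self-intersection $-3$ or less (forced to lie symmetrically with respect to the two $j$-fixed vertices). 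Inspecting each of the twelve cycles in Table~\ref{n12cusps} individually and tracking the positions of the components of self-intersection at most $-3$ relative to the pair of fixed components, one verifies that no admissible toric starting cycle can be modified equivariantly into the required pattern — the typical failure is that a component of self-intersection $-3$ is paired by $\sigma$ with another $-3$ component at a forbidden distance from a fixed vertex, while every admissible toric cycle forces such pairs to be either adjacent to, or symmetric across, a fixed vertex.

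The main obstacle will be the combinatorial bookkeeping: both the enumeration of admissible toric starting cycles and, especially for part (b), the systematic verification that no sequence of equivariant interior blowups bridges the gap to each of the twelve prescribed patterns. Organizing the enumeration by the positions of the two $j$-fixed components of $D$ and by the parity constraints on their self-intersections reduces the analysis to a manageable table, after which the constructive part of (a) and the obstruction in (b) both become routine inspections.
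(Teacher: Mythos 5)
Your proposal follows essentially the same route as the paper's proof: enumerate the minimal symmetric length-$12$ toric models of $(\pl\times\pl,\Delta)$ (the paper's Table \ref{table12}), lift $j_0$ and perform equivariant interior blowups to reach every cycle outside Table \ref{n12cusps} via the sufficient condition of Subsection \ref{maincproof}, and for the listed cycles invoke Theorem \ref{desact} together with an exhaustive combinatorial check that no equivariant blowup sequence from a symmetric toric model realizes them. The paper is equally brief about the case analysis, simply asserting that all cases were checked by exhaustion, so your plan matches it in both structure and level of detail.
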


\begin{proof}
Similarly to the proof of proposition \ref{n10}, an antisymplectic involution can be constructed for a list of minimal toric Looijenga pairs of length equal to 12, given in table \ref{table12}. Recall that they are minimal in the sense that each component has self intersection as small as possible in absolute value, while still being toric and of the right length: in this case it means that they are at most equal to 6 in absolute value. Therefore the symmetric cusp singularities corresponding to anticanonical pairs that can be obtained from the ones in table \ref{table12} through interior blowups are equivariantly smoothable thanks to the sufficient condition of conjecture \ref{mainc} proved in section \ref{maincproof}, as we observed in proposition \ref{n10def}.

We checked by exhausting all possible cases that it is not possible to construct  Looijenga pairs $\yd$, where $D$ has the cycles of integers listed in table \ref{n12cusps}, by starting from $\pl\times\pl$ and subsequently performing pairs of toric and interior blowups in a symmetric way. By proposition \ref{desact} this implies that there does not exists a Looijenga pair $\yd$ equipped with an antisymplectic involution that is fixed point free away from the anticanonical divisor, for any $D$ having cycle of integers listed in table \ref{n12cusps}. Therefore we cannot conclude that the correspondent symmetric cusps are equivariantly smoothable.

\begin{table}[h]
\begin{center}
\begin{tabular}{c} 

 Toric Looijenga pairs  \\ \hline
  $(1,2,2,2,1,2,1,2,2,2,1,6)$ \\
  $(1,2,2,2,1,4,1,2,2,2,1,4)$ \\
  $(4,1,2,2,2,0,2,2,2,1,4,2)$ \\
  $(2,2,1,4,1,2,1,4,1,2,2,2)$ \\
  $(3,2,1,3,2,0,2,3,1,2,3,2)$ \\
  $(3,1,3,1,3,0,3,1,3,1,3,2)$ \\
  $(1,3,1,3,1,2,1,3,1,3,1,4)$ \\
  $(2,1,3,2,1,2,1,2,3,1,2,4)$ \\
\end{tabular}
\end{center}\vspace{0.4cm}\caption{Toric Looijenga pairs used in the proof of proposition \ref{n12smooth}}\label{table12}
\end{table}
\end{proof}

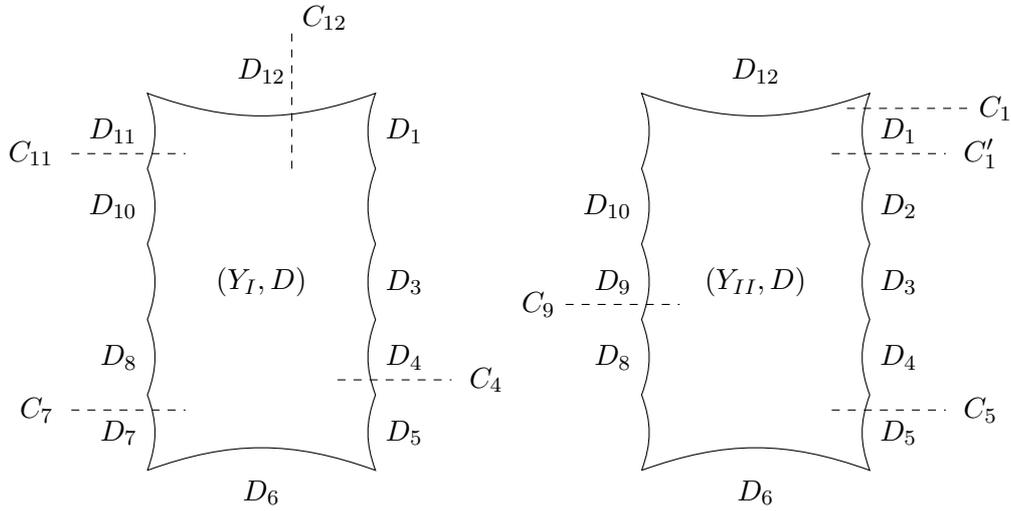
\begin{figure}[h!]
\begin{center}
\begin{tikzpicture}[out=20,in=160,relative]
\draw (-1.5,0) to (1.5,0) to (1.5,1) to (1.5,2) to (1.5,3) to (1.5,4) to (1.5,5) to (-1.5,5) to (-1.5,4) to (-1.5,3) to (-1.5,2) to (-1.5,1) to  (-1.5,0);
\draw[dashed] (0.4,4) -- (0.4,5.8);
\draw[dashed] (-2.5,4.2) -- (-1,4.2);
\draw[dashed] (-2.5,0.8) -- (-1,0.8);
\draw[dashed] (1,1.2) -- (2.5,1.2);
\node [below] at (0,0) {$D_6$};
\node [right] at (1.5,0.5) {$D_5$};
\node [right] at (1.5,1.5) {$D_4$};
\node [right] at (1.5,2.5) {$D_3$};
\node [right] at (1.5,4.5) {$D_1$};
\node [above] at (0,5) {$D_{12}$};
\node [left] at (-1.5,4.5) {$D_{11}$};
\node [left] at (-1.5,3.5) {$D_{10}$};
\node [left] at (-1.5,1.5) {$D_8$};
\node [left] at (-1.5,0.5) {$D_7$};
\node at (0,2.5) {$(Y_{I},D)$};
\node [right] at (0.4,6) {$C_{12}$};
\node [right] at (2.6,1.2) {$C_4$};
\node [left] at (-2.6,0.8) {$C_7$};
\node [left] at (-2.6,4.2) {$C_{11}$};

\draw (5,0) to (8,0) to (8,1) to (8,2) to (8,3) to (8,4) to (8,5) to (5,5) to (5,4) to (5,3) to (5,2) to (5,1) to  (5,0);
\draw[dashed] (4,2.2) -- (5.5,2.2);
\draw[dashed] (7.5,4.2) -- (9,4.2);
\draw[dashed] (7.7,4.8) -- (9.3,4.8);
\draw[dashed] (7.5,0.8) -- (9,0.8);
\node [below] at (6.5,0) {$D_6$};
\node [right] at (8,0.5) {$D_5$};
\node [right] at (8,1.5) {$D_4$};
\node [right] at (8,2.5) {$D_3$};
\node [right] at (8,3.5) {$D_2$};
\node [right] at (8,4.5) {$D_1$};
\node [above] at (6.5,5) {$D_{12}$};
\node [left] at (5,3.5) {$D_{10}$};
\node [left] at (5,2.5) {$D_9$};
\node [left] at (5,1.5) {$D_8$};
\node at (6.5,2.5) {$(Y_{II},D)$};
\node [right] at (9.3,4.8) {$C_1$};
\node [right] at (9.1,4.2) {$C'_1$};
\node [right] at (9.1,0.8) {$C_5$};
\node [left] at (4,2.2) {$C_9$};
\end{tikzpicture}
\end{center}
\caption{Schematic description of the two possible Looijenga pairs for $D$}\label{n12}
\end{figure}

\begin{ex}\label{n12inv}
Among the cusps listed in proposition \ref{n12smooth} there is the one with associated cycle of integers $(3,10,3,4)$: its dual cusp $D$ corresponds to the exceptional cycle with self intersections $(3,3,2,2,2,2,2,2,2,3,3,2)$.
By inspection it can be proved that, up to isomorphism, there are two anticanonical pairs with cycle $D$ (see figure \ref{n12}: here the dotted lines represent $(-1)$-curves) and that for both of them there does not exist a map to $\pl\times\pl$ which allows us to lift the involution $j$ defined on $\pl\times\pl$ as we did for cycles of length $n\leq 10$.
\end{ex}

\begin{conj}
The cusp singularities listed in proposition \ref{n12smooth} do not admit an equivariant smoothing.
\end{conj}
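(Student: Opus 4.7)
The plan is to deduce this conjecture from the necessity direction of Conjecture \ref{mainc}, which the paper has already established the sufficiency of in subsection \ref{maincproof}. Once the necessary direction is in hand, the conjecture is immediate: the cusps listed in table \ref{n12cusps} are by Proposition \ref{n12smooth} precisely those whose dual cycles $D$ do not sit on any Looijenga pair $\yd$ carrying an antisymplectic involution free away from $D$; hence the contrapositive of the necessary condition of \ref{mainc} rules out any equivariant smoothing.

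My strategy to obtain the necessary direction would follow Looijenga's topological proof of the necessity in Theorem \ref{looij}, carried out equivariantly. Starting from an equivariant smoothing $\mathcal X\to \Delta$ of $\px$ with respect to the given involution $\iota$, the induced $\zz$-action on the Milnor fibre $M$ is antisymplectic and fixed-point free, since $\iota$ is. First, I would realise $M$ as $Y\setminus D$ for some Looijenga pair $\yd$ with $D$ equal to the dual cycle, using Looijenga's smooth compactification; the goal is then to show that, after possibly modifying the compactification by interior and toric blowups along $D$, the $\zz$-action on $M$ extends to an antisymplectic involution $j$ of $\yd$ free on $Y\setminus D$ and inducing on $D$ the reflection produced by Remark \ref{toricdualact}. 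The translation from this topological/analytic setup into the algebraic category should be done via the GHK period description: by Proposition \ref{modulispaceinv}, the locus in the Looijenga moduli space $T=\Hom(\Lambda,\C^*)$ consisting of pairs admitting an involution of the required type is cut out by the explicit equation $\phi\circ\theta(q)=\phi(q)^{-1}$. An equivariant smoothing of $\px$ should correspond, via the mirror-symmetric family constructed in subsection \ref{maincproof}, to a period point lying exactly on this locus, thereby producing the desired pair $\yd$ together with $j$.

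The main obstacle is step (c) above: ensuring that the $\zz$-action on $M$ extends holomorphically to some compactification $Y$ without introducing fixed points off $D$. A priori a chosen compactification need not be equivariant, and naïve blowups could create fixed curves. I expect one must construct the compactification equivariantly from the start, using the fact that near the cusp at infinity the Milnor fibre $M$ is modelled on a tube domain $U_{C'}/\langle A\rangle$ on which the involution is already the linearised reflection $(B,t)$ of Proposition \ref{cuspact}, so the toric boundary added in Looijenga's compactification automatically carries a compatible involution; the task is then to verify that a global equivariant model with $D$ anticanonical can be chosen, and to control the ambiguity of the extension by the kernel $K(Y,D)$ as in Theorem \ref{torelliinv}. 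A secondary subtlety is that in the cycle case the relevant smoothing component of $\px$ need not be unique, and one must check that every equivariant component lands on the symmetric locus of $T$; this should follow from the $\zz$-invariance of the distinguished nef class $L$ and the choice of $\sigma_P=\sigma\cap j^*(\sigma)$ made in subsection \ref{maincproof}, matching the equivariant GHK family to any given equivariant smoothing by Artin approximation.
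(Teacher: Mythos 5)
This statement is labeled a \emph{conjecture} in the paper, and the paper offers no proof of it: the author explicitly states that only the sufficiency half of Conjecture \ref{mainc} has been established, and the whole point of listing the twelve cusps in Table \ref{n12cusps} is that their equivariant non-smoothability would \emph{follow} from the (unproven) necessity half. Your reduction is therefore correct as far as it goes --- Proposition \ref{n12smooth} shows these are exactly the symmetric cusps of embedding dimension $12$ whose dual cycles admit no Looijenga pair with an antisymplectic involution free away from $D$, so the contrapositive of the necessity direction of \ref{mainc} would immediately give the claim --- but this only relocates the entire difficulty into proving that necessity direction, which neither you nor the paper actually does.

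The strategy you sketch for the necessity direction contains the genuine gaps, and you name the main one yourself: given an equivariant smoothing, one must extend the induced free antisymplectic $\zz$-action on the Milnor fibre $M$ to an involution of some compactification $\yd$ with $D$ anticanonical, free on $Y\setminus D$. Nothing in the paper (nor in Looijenga's original argument, which is not equivariant) guarantees that an equivariant compactification exists, that the boundary can be kept anticanonical while making it equivariant, or that no fixed curves appear in the interior; your appeal to the tube-domain model only controls a neighbourhood of the cusp at infinity, not the global surface. The second step --- matching an arbitrary equivariant smoothing component to the symmetric locus of $T=\Hom(\Lambda,\C^*)$ via Proposition \ref{modulispaceinv} and ``Artin approximation'' --- is likewise asserted rather than argued; Proposition \ref{modulispaceinv} describes the locus of pairs admitting an involution inducing a \emph{given} isometry $\theta$ of $\Pic Y$, and you would first need to produce such a $\theta$ from the smoothing, which is again the missing extension step. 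In short: the reduction is sound, but the key lemma it rests on is precisely the open half of Conjecture \ref{mainc}, so the proposal is a program, not a proof.
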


\subsection{Equivariant smoothings of simple elliptic singularities}\label{ellsect}
Let us describe what happens if, instead of cusp singularities, we consider the case of simple elliptic singularities, namely cones over elliptic curves. This is an interesting case, since these singularities and their deformation space have been studied extensively (see for example \cite{P74} and \cite{M82}) and can be described quite explicitly. 

\bigskip
For simple elliptic singularities $p\in C(E)$, where $C(E)$ is the cone over a smooth elliptic curve $E$ of degree $d$ smaller than eight, there exists essentially one smoothing component with associated Milnor fiber given by $M=S\setminus E$, where $S$ is the del Pezzo surface of corresponding degree $d$. To be more precise in these cases, a smoothing family can be obtained as follows. Let $S$ be a del Pezzo surface of degree $d$ and consider the projective closure of the affine cone over this surface $\overline{C(S)}\subset \mathbb P^{d+1}$. Let $\mathcal H_t$, with $t\in \mathbb A^1_t$ be a family of hyperplanes in $\mathbb P^n$ such that $p \in \mathcal H_t$ if and only if $t=0$ and let us consider $\mathcal X_t:=\mathcal H_t\cap \overline{C(S)}$. Then $\mathcal X\rightarrow \mathbb A^1_t$ is a smoothing family for $p\in C(E)$ with $\mathcal X_0\cong \overline{C(E)}$ and $\mathcal X_t\cong S$ for $t\neq 0$.
On the other hand, given a cone over an elliptic curve $E$ of degree eight, its deformation space is isomorphic to $(\bigcup_{i=1}^4\mathbb A^1\times \mathbb A^2) \cup C(\mathcal E))$, where $C(\mathcal E)$ is the cone over the universal elliptic curve $\mathcal E\rightarrow \mathbb A^1$. Each plane $\mathbb A^2$ and the cone $C(E)$ give a smoothing component for the singularity, thus implying that every simple elliptic singularity of degree 8 is smoothable, and they are distinguished by the associated Milnor fibre. The Milnor fibre $M_i$ corresponding to $\bigcup_{i=1}^4 \mathbb A^1\times \mathbb A^2$ is isomorphic to $\pl\times\pl\setminus E$, where $E$ again is the elliptic curve we start with. While the other smoothing component has Milnor fiber given by $M_{ii}=\mathbb F_1\setminus E$.

\bigskip
Moreover, let $M$ be the Milnor fibre of a smoothing of a simple elliptic singularity $p\in C(E)$ of degree $d$. Then $M$ is mirror (cfr. \cite{HK20} and \cite{AKO06}) to the surface $U_d=Y_{(d)}\setminus D_{(d)}$, where $(Y_{(d)},D_{(d)})$ is a semidefinite negative pair with $D_{(d)}$ a cycle of $d$  $(-2)$-curves. The two Milnor fibres $M_i,M_{ii}$, which correspond to the two different smoothing components of a simple elliptic singularity of degree eight are mirror to the surfaces $U_i=Y_i\setminus D$ and $U_{ii}=Y_{ii}\setminus D$, where $(Y_i,D)$ and $(Y_{ii},D)$ are semidefinite Looijenga pairs and $D$ is a cycle of eight rational curves of self intersection -2. They can be obtained explicitly from the toric pairs $(T_{i},G_{i})$, whose divisor $G_{i}$ has cycle of integers $(1,2,1,2,1,2,1,2)$, and $(T_{ii},G_{ii})$, that is associated to $(1,2,1,2,2,1,2,1)$, respectively through four interior blowups on the (-1)-curves contained in the toric boundaries (see figure \ref{toricfans} for a description of the toric fans of $T_{i}$ and $T_{ii}$). We observe that $M_i$ and $M_{ii}$ are not diffeomorphic, since they are diffeomorphic to the open manifolds $U_i,U_{ii}$ and these manifolds have have different fundamental groups. More precisely, $\pi_1(U_i)=\zz$ while $\pi_1(U_{ii})=0$. This can be seen recalling that if $\yd$ is obtained from a toric surface through a sequence of interior blowups, then $\pi_1(Y\setminus D)=N/\langle v_1,\dots,v_p\rangle$, where $N$ is the lattice containing the fan of the toric variety and $v_1,\dots,v_p\in N$ are the primitive vectors corresponding to the curves of the toric boundary where the blowups are performed. Now, the cones of the fan of the toric surface $T_{ii}$ are generated by the rays associated to $\{e_1, 2e_1+e_2, e_1+e_2, e_2, -e_1+e_2, -e_1, -e_1-e_2, -e_2\}$ in $N_\R$. Therefore, the set of vectors $\{2e_1+e_2, -e_1+e_2, -e_1-e_2, -e_2\}$ corresponding to the four (-1)-curves where the blowups are performed (in figure \ref{toricfans} they are marked by a red dot) contains a basis for the lattice $N$ and as a consequence the fundamental group of $U_{ii}$ is trivial. The cones of the fan of the toric surface $T_{i}$ instead are generated by the rays given by the vectors $\{e_1, e_1+e_2, e_2, -e_1+e_2, -e_1, -e_1-e_2, -e_2, e_1-e_2\}$, hence the set of vectors $\{e_1+e_2, -e_1+e_2, -e_1-e_2, e_1-e_2\}$ corresponding to the four blowups share some linear relations.  Thus $\pi_1(U_i)=\zz$.

\begin{figure}[h!]
\begin{center}
\begin{tikzpicture}[out=20,in=160,relative]
 
\draw[->] (-3,0) -- (-1.5,0);
\draw[->] (-3,0) -- (-1.5,1.5);
\draw[->] (-3,0) -- (-3,1.5);
\draw[->] (-3,0) -- (-4.5,1.5);
\draw[->] (-3,0) -- (-4.5,0);
\draw[->] (-3,0) -- (-4.5,-1.5);
\draw[->] (-3,0) -- (-3,-1.5);
\draw[->] (-3,0) -- (-1.5,-1.5);

\node [right] at (-1.5,1.5) {\textcolor{red}{$\bullet$}};
\node [left] at (-4.5,1.5) {\textcolor{red}{$\bullet$}};
\node [left] at (-4.5,-1.5) {\textcolor{red}{$\bullet$}};
\node [right] at (-1.5,-1.5) {\textcolor{red}{$\bullet$}};
\node [right] at (-1.5,0) {$e_1$};
\node [above] at (-3,1.5) {$e_2$};

\draw[->] (3,0) -- (1.5,0);
\draw[->] (3,0) -- (1.5,1.5);
\draw[->] (3,0) -- (3,1.5);
\draw[->] (3,0) -- (4.5,1.5);
\draw[->] (3,0) -- (4.5,0);
\draw[->] (3,0) -- (6,1.5);
\draw[->] (3,0) -- (3,-1.5);
\draw[->] (3,0) -- (1.5,-1.5);

\node [right] at (6,1.5) {\textcolor{red}{$\bullet$}};
\node [below] at (3,-1.5) {\textcolor{red}{$\bullet$}};
\node [left] at (1.5,1.5) {\textcolor{red}{$\bullet$}};
\node [left] at (1.5,-1.5) {\textcolor{red}{$\bullet$}};
\node[right] at (4.5,0) {$e_1$};
\node[above] at (3,1.5) {$e_2$};

\end{tikzpicture}
\end{center}
\caption{The fans of the toric surfaces $T_{i}$ and $T_{ii}$}\label{toricfans}
\end{figure}
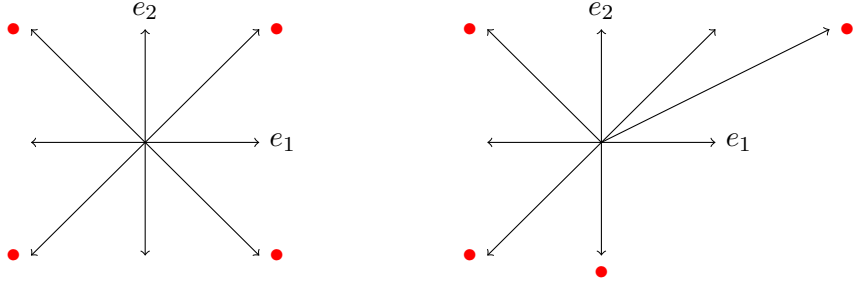

\bigskip
Let now $\iota$ be an involution on $X=C(E)$ for a given smooth elliptic curve $E$ of even degree $d\leq 8$, inducing the hyperelliptic involution on $E$ and acting by $(-1)$ on the fixed fibres. We can ask whether the existence of an equivariant smoothing  for $p\in X$ corresponds to the existence of an involution on the associated Looijenga pair $(Y_{(d)},D_{(d)})$ or, in the case $d=8$, on either $(Y_i,D)$ or $(Y_{ii},D)$, which is free on the complement of $D$ and acts as a reflection on $D$, in the spirit of the main conjecture \ref{mainc} for cusp singularities stated previously in this chapter. Let us start by stating a result on the existence of $\zz$-equivariant smoothings.

\begin{theo}
Let $E$ be a smooth elliptic curve of even degree $d\leq 8$ and $\iota$ an involution defined as above. Then there always exists a $\zz$-equivariant smoothing of the singularity $p\in C(E)$. Furthermore, if $d=8$, then the Milnor fibre of any $\zz$-equivariant smoothing is isomorphic to $M_{i}$.
\end{theo}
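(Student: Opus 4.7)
My plan is to establish the two assertions of the theorem separately: existence of an equivariant smoothing via an equivariant Gross--Hacking--Keel type construction, and uniqueness of the Milnor fibre for $d = 8$ by a direct analysis on $\mathbb F_1$.

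For \textbf{existence}, for each even $d \leq 8$ I will exhibit an antisymplectic involution on the negative semidefinite Looijenga pair $(Y_{(d)}, D_{(d)})$ (where $D_{(d)}$ is the cycle of $d$ smooth rational $(-2)$-curves mirror to the target Milnor fibre, and for $d = 8$ this is $(Y_i, D)$) acting freely on $Y_{(d)} \setminus D_{(d)}$, and then invoke an equivariant analogue of the scattering/smoothing construction of Section~\ref{maincproof}. For $d = 8$, the involution on $(Y_i, D)$ is obtained by lifting the toric involution $(z,w) \mapsto (1/z, -w)$ on $\pl\times\pl$ through a chain of equivariant toric blowups to the toric pair $(T_i, G_i)$, whose four $(-1)$-curves fall into two $\zz$-invariant antipodal pairs by inspection of the fan, and then performing the four required interior blowups in $\iota$-symmetric pairs by Remark~\ref{resact}. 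An analogous construction works for $d = 2, 4, 6$. The resulting equivariant family gives the desired smoothing of $C(E)$, with Milnor fibre $M_i$ when $d = 8$.

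For \textbf{uniqueness of the Milnor fibre when $d = 8$}, I argue by contradiction. Suppose an equivariant smoothing $\mathcal X \to \mathbb A^1_t$ of $C(E)$ has Milnor fibre isomorphic to $M_{ii}$. Taking the relative projective closure of $\mathcal X$ by adding $E$ at infinity to each affine fibre, the involution $\iota$ extends to an antisymplectic involution $\iota'$ of $\mathbb F_1$ preserving $E$, restricting on $E$ to the hyperelliptic $\iota_E$, and acting freely on $\mathbb F_1 \setminus E$. I will show no such $\iota'$ can exist. Any automorphism of $\mathbb F_1$ preserves its unique $(-1)$-section $C_0$, and $C_0 \cap E$ is a single point $q$. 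If $\iota'|_{C_0}$ is trivial then $C_0 \subset \mathrm{Fix}(\iota')$, producing a curve of fixed points on $\mathbb F_1 \setminus E$. If $\iota'|_{C_0}$ is a nontrivial involution of $C_0 \cong \pl$, its two fixed points on $C_0$ are distinct and so at least one, call it $q^*$, lies in $C_0 \setminus E \subset \mathbb F_1 \setminus E$; since the log-Calabi--Yau $2$-form $\omega$ on $\mathbb F_1$ is holomorphic and non-vanishing at $q^*$, antisymplecticity forces the tangent action of $\iota'$ at $q^*$ to have eigenvalues $(+1, -1)$, so $q^*$ lies on a smooth fixed curve of $\iota'$ meeting $\mathbb F_1 \setminus E$ non-trivially. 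Either case contradicts freeness, so the Milnor fibre must be $M_i$.

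\textbf{The main obstacle} is the adaptation of the equivariant Gross--Hacking--Keel construction to the negative semidefinite setting: Theorem~\ref{suff} is proved only in the negative definite case, while here $D_{(d)}$ consists entirely of $(-2)$-curves, so extending the equivariant scattering arguments requires combining the construction of Section~\ref{maincproof} with the simple-elliptic treatment of Gross--Hacking--Keel. A secondary subtlety is the verification that $\iota$ extends holomorphically from the Milnor fibre to the compactification $\mathbb F_1$; this should follow from the natural relative projective cone structure of the family, but needs to be made precise.
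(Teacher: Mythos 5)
Your proposal splits into two halves, and each has a genuine gap. On \textbf{existence}, you propose to construct an involution on the mirror semidefinite pair $(Y_{(d)},D_{(d)})$ (respectively $(Y_i,D)$) and then run an equivariant scattering construction; but the equivariant Gross--Hacking--Keel argument of Section~\ref{maincproof} is set up only for negative definite anticanonical cycles, and you yourself flag its extension to the semidefinite case as ``the main obstacle'' without resolving it. As it stands the existence claim is therefore not proved. The paper avoids this machinery entirely: for $d=8$ it embeds $\pl\times\pl$ anticanonically in $\mathbb P^8$, lifts the involution $(z,w)\mapsto(1/z,-w)$ to an involution of $\mathbb P^9$ acting on the projective cone, and cuts by an equivariant pencil of hyperplanes whose generic member is smooth by an explicit base-point computation plus Bertini; for $d<8$ it realizes the del Pezzo surface with involution and invariant anticanonical elliptic curve via a Torelli theorem for pairs with smooth boundary (Theorem~\ref{torsmooth}) and an explicit choice of period point. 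Even granting your equivariant semidefinite construction, you would still need an analogue of Proposition~\ref{uniqueinv} to identify the induced involution on the central fibre with the prescribed $\iota$; this step is absent.

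On \textbf{uniqueness for $d=8$}, your contradiction rests on the assertion that the extended involution $\iota'$ acts freely on $\mathbb F_1\setminus E$. This is not justified and does not follow from freeness of $\iota$ on $C(E)\setminus\{p\}$: applying upper semicontinuity of fibre dimension to the fixed locus of the involution in the total space of the family only gives that the fixed locus of a general fibre is at most zero-dimensional, i.e.\ the involution on the Milnor fibre may a priori have isolated fixed points (this weaker statement is exactly what the paper uses). Consequently, exhibiting a fixed point $q^*\in C_0\setminus E$ is not by itself a contradiction. Your argument is repairable: in both branches of your case analysis you actually produce a \emph{curve} of fixed points meeting $\mathbb F_1\setminus E$ (in the second branch via the tangent eigenvalues $(+1,-1)$ at $q^*$), and a fixed curve does contradict the zero-dimensionality coming from semicontinuity. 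But for that you must first prove that $\iota'$ satisfies $\iota'^*\omega=-\omega$ for the log $2$-form with pole along $E$ --- if the sign were $+1$ the eigenvalues at $q^*$ could be $(-1,-1)$ and no contradiction would arise. The sign can be pinned down by taking residues along $E$ and using that $\iota'|_E$ is the hyperelliptic involution, but this is a necessary step you have only asserted. The paper's route is cleaner at this point: $\iota'$ preserves the unique $(-1)$-curve, descends to an involution of $\mathbb P^2$ with only isolated fixed points, and no such involution of $\mathbb P^2$ exists.
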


\begin{proof}
First, let us assume $E$ is a smooth elliptic curve of degree eight. As described above, a smoothing of $p\in C(E)$ can be obtained by considering one of the two associated del Pezzo surfaces: let us choose $\pl\times\pl$. The description of how to obtain a smoothing of a simple elliptic singularity given at the beginning of this section, for this specific case, gives us the following set up: $\pl\times\pl$ is embedded in $\mathbb P^8$ via the anticanonical divisor, $\pl\times\pl \xrightarrow{i} \mathbb P(H^0(-K_{\pl\times\pl}))$, hence its projective cone is embedded in $\mathbb P^9$. Now, let us consider the involution $j$ defined on this surface by $(z,w)\mapsto (z^{-1},-w)$ which, in homogeneous coordinates, becomes 
$$(x_0:x_1),(y_0:y_1)\mapsto (x_1,x_0),(y_0:-y_1)$$
or, after a change of coordinates
$$(x_0:x_1),(y_0:y_1)\mapsto (x_0:-x_1),(y_0:-y_1)$$
Now we can use this involution to define a $\zz$-action on the line bundle $-K_{\pl\times\pl}=\mathcal O(2,2)$. More precisely we define an involution $\vartheta$ on its global sections $z_0,\dots,z_8$ as the map directly induced by $j_0$ on the bi-homogeneous monomials of degree $(2,2)$ corresponding to each $z_i$ composed with the map $z_i\mapsto -z_i$. Note that this involution fixes four of the nine monomials and acts as $(-1)$ on the remaining five. Moreover we use the composition with the latter map to obtain a $\zz$-action that is non trivial on the fibres above the four points in $\pl\times\pl$ fixed by $j$. The map $\vartheta$ gives an involution on $\mathbb P^8_{z_0,\dots,z_8}$ therefore one on $\mathbb P^9_{z_0,\dots,z_9}$, extending the former involution so that it acts trivially on the last coordinate. In order to show that there exists an equivariant smoothing of $p\in C(E)$ we just need to show that we can construct a family of planes $\mathcal H_t$ in $\mathbb P^9$ which is equivariant with respect to the action just defined on $\mathbb P^9$ and when intersected with $X:=i(\pl\times\pl)\subset \mathbb P^8$ gives the smooth elliptic curve $E$. To make the family $\mathcal H_t$ equivariant, we define it by setting the coefficients $a_i(t)$ of the five coordinates which are not fixed by the involution to be equal to 0 for all $t$. Then $\hat {\mathcal H}_t=\mathcal H_t\cap \mathbb P^8$ is of the form $a_1(t)z_1+a_3(t)z_3+a_5(t)z_5+a_7(t)z_7=0$, and its generic intersection with $i(\pl\times\pl)$ is smooth. Indeed the linear system associated to $\hat{\mathcal H}_t$ in $H^0(-K_{\pl\times\pl},\C)$ is generated by the sections $x_0^2y_0y_1,x_1^2y_0y_1,x_0x_1y_0^2,x_0x_1y_1^2$. The base points are the fixed points on $\pl\times\pl$ and it can be checked directly that $\hat{\mathcal H}_t$ is smooth at these points for generic coefficients $a_i(t)$. Thus we can conclude it is smooth everywhere by Bertini's theorem and it follows that $\mathcal X_t:=\mathcal H_t\cap \overline{C(X)}\subset \mathbb P^9$ is an equivariant smoothing of the simple elliptic singularity of degree 8.

Every del Pezzo surface of degree $8-i$ can be obtained from $\pl\times\pl$ through the blowup of $i$ points satisfying some suitable conditions, thus the same construction used above will provide the required equivariant smoothing. In order to see that the blowups mentioned above are possible we will use a version of the Torelli's theorem for anticanonical pairs with smooth boundary that will be contained in a paper in preparation by Jennifer Li, but we could also use the (weaker) results contained in section 6 of a paper by McMullen (see \cite{M07}). We include the statement of the theorem at the end of this proof for clarity (theorem \ref{torsmooth}) and we give a proof for the case where $i=6$, i.e. the blow up of $\pl\times\pl$ in $6$ points. Indeed, once this is proved, the other cases can be obtained from this.

Now, let $(Y,E)$ be the del Pezzo surface with the (anticanonical) elliptic curve $E$ of degree 2 on it. In order to apply theorem \ref{torsmooth} with $E_1=E_2=E$, $g=-1$, $f^*=\theta$, $\phi\circ\theta=-\phi$ we need to choose an appropriate period point $\phi: E^\perp\rightarrow \textnormal{Pic}^\circ E\cong E$. Let us first establish the notation for a basis of $E^\perp$. Call $F_1,F_2$ the strict transforms of the generators of the Picard group of $\pl\times\pl$ and call $E_1,\dots,E_6$ the exceptional divisors of the blowups. Then a basis for $E^\perp$ is given by $F_1-F_2, F_1-(E_1+E_2), E_1-E_3, E_2-E_4, E_3-E_5, E_4-E_6, E_5-E_6$; for ease of notation let us label them $\alpha_1,\dots,\alpha_7$. Define an involution $\theta$ on the Picard group of $Y$ so that $\theta(F_1)=F_1$, $\theta(F_2)=F_2$, $\theta(E_{2k-1})=E_{2k}$ for $k=1,2,3$. As a consequence $\theta$ acts on $E^\perp$ as the block diagonal matrix

$$\begin{pmatrix}   1 &   &     &     &0 \\
                      & 1 &     &     &  \\
                      &   & J_1 &     &  \\
                      &   &     & J_2 &  \\
                    0 &   &     &     & -1 \\
                                        \end{pmatrix} 
						  \qquad \textnormal{where} \quad J_i=\begin{pmatrix}0 & 1\\
                                             1 & 0\end{pmatrix}
                          \quad\textnormal{for} \ i=1,2$$
With this set up, it remains to show that we can choose the period point $\phi$ such that
\begin{itemize}
\item [a)] $\phi\circ\theta = -\phi$
\item [b)] $\phi(\alpha)\neq 0$ for all $\alpha\in\Phi$, where $\Phi$ is the set of roots in Pic $Y$ (cfr. definition 1.6 in \cite{GHK15b}).
\end{itemize}
Condition (a) translates as:
\begin{equation}\label{eqinv}\begin{cases}
\phi(\alpha_i)=-\phi(\alpha_i) \quad\textnormal{for} \ i=1,2 \\
\phi(\alpha_{2k-1})=-\alpha_{2k} \quad\textnormal{for} \ k=2,3
\end{cases}\end{equation}
As for condition (b), recall that, for any root in $\alpha\in\Phi$, we must have
$$\phi(\alpha)=\sum_i m_i\phi(\alpha_i) \quad\textnormal{with} \ m_i\in\z \quad \forall i$$
Thus (b) becomes:
$$\sum_i m_i\phi(\alpha_i)\neq 0 \quad\textnormal{in} \ E \quad\Longleftrightarrow \quad \sum_i m_i\phi(\alpha_i)+\sum_j n_j\lambda_j\neq 0 \quad\textnormal{in} \ \C$$
given that $E=\C/\Lambda$ and $\Lambda=\z\lambda_1+\z\lambda_2$. From (\ref{eqinv}) it follows that we can choose $\phi(\alpha_3),\phi(\alpha_5), \phi(\alpha_7)$ arbitrarily: let us choose them and $\lambda_1,\lambda_2$ so that they are linearly independent over $\mathbb Q$. Moreover, we can always choose $\phi(\alpha_1),\phi_(\alpha_2)$ so that they are 2-torsion but non zero in $E$ and such that $\phi(\alpha_1)+\phi(\alpha_2)\neq 0$ in $E$. Thus the only linear relations equal to zero we have to check are $\phi(\alpha_3)+\phi(\alpha_4)$ and $\phi(\alpha_5)+\phi(\alpha_6)$. However neither $\alpha_3+\alpha_4$ nor $\alpha_5+\alpha_6$ are roots (their self intersections are equal to $-4\neq-2$), thus we can always choose a $\phi$ as desired ad apply theorem \ref{torsmooth}. We get an isomorphism $f$ that is an involution thanks to its uniqueness.

On the other hand, let us consider a simple elliptic singularity $p\in C(E)$ of degree eight, let $\iota$ be the involution defined on it and let us assume that there exists a $\zz$-equivariant smoothing of this singularity whose Milnor fibre is isomorphic to $M_{ii}$. Then $\iota$ induces an involution on $M_{ii}$ which can be extended to an involution of $\mathbb F_1$. Now theorem 4.2 in \cite{P74} still holds true if we add the hypothesis that the singularity is endowed with a $\zz$-action, because the latter commutes with the $\C^*$-action: more precisely we have that given a $\zz$-equivariant infinitesimal deformation of the cone $C$ as described in \cite{P74}, then it lifts to a $\zz$-equivariant infinitesimal deformation of its projective version. Note that in this case $T^1_C$ would have a $\z\oplus\zz$ grading. In our case we this get that a $\zz$-equivariant smoothing can be globalized to a $\zz$-equivariant smoothing of the projective cone over the elliptic curve, with general fiber $\mathbb F_1$. Now since there is a unique $(-1)$-curve on $\mathbb F_1$, it is necessarily fixed by the involution and can be blown down to obtain an involution on $\mathbb P^2$. The involution on $\mathbb F_1$ has only isolated fixed points (by upper semicontinuity of fiber dimension applied to the fixed locus in the family, since  the involution has isolated fixed points on the special fiber). It follows that the involution on $\mathbb P^2$ has isolated fixed points, which is a contradiction (by the classification of involutions of $\mathbb P^2$).
\end{proof}

\begin{theo}[Li, to appear]\label{torsmooth}
Let $X$ be a smooth projective surface and $E$ a smooth connected anticanonical divisor. Then $E$ is an elliptic curve and $X$ is rational. Let $\phi: E^\perp\rightarrow \textnormal{Pic}^\circ E\cong E$ the period point. Given any such pair $(X_0,E_0)$, if $(X,E)$ is deformation equivalent to it, identify $\textnormal{Pic}X=H^2(X,\z)\cong H^2(X_0,\z)=\textnormal{Pic}X_0$ via parallel transport. We get a map $L\rightarrow E$, with $L=E_0^\perp$. Then:
\begin{itemize}
\item [1.] The period map is surjective, i.e. all homomorphisms $\phi$ arise in this way (for all $E$).
\item [2.] If $(X_1,E_1)$ and $(X_2,E_2)$ are two such pairs, $\theta\in\textnormal{Adm}$ and
\begin{itemize}
\item [i.] There is an isomorphism $g:E_2\rightarrow E_1$
\item [ii.] $\phi_1=g\circ\phi_2\circ\theta$
\item [iii.] There does not exist any $(-2)$-curve in $X_i\setminus E_i$, $i=1,2$. Eequivalently, $\phi_i(\alpha)\neq 0$ for all roots $\alpha\in\Phi_i$.
\end{itemize}
Then there exists an isomorphism $f:(X_1,E_1)\rightarrow(X_2,E_2)$ such that $f^*=\theta$. Moreover $f$ is unique if $\pi_1(X_i)=0$ (iff $X_i$ are not isomorphic to $\mathbb P^2$ or $\pl\times\pl$).
\end{itemize}
\end{theo}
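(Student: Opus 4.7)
The plan is to adapt the strategy of the Gross--Hacking--Keel Torelli theorem for Looijenga pairs (Theorem \ref{torelli}) to the smooth boundary setting, replacing the multiplicative period valued in $\gm$ by an elliptic period valued in $\Pic^\circ E\cong E$. First I would establish the following structural reduction: any pair $(X,E)$ with $E$ a smooth anticanonical elliptic curve admits an iterated blowup description $\pi\colon X\to\bar X$, where $\bar X$ is a minimal rational surface ($\mathbb P^2$, a Hirzebruch surface $\mathbb F_n$, or $\pl\times\pl$), the blowup centers $p_1,\dots,p_N$ lie on a smooth anticanonical $\bar E\subset\bar X$, and $E$ is the strict transform of $\bar E$. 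After fixing a base point, the period $\phi$ admits an explicit formula in terms of the positions $p_i\in\bar E$ and the class of $\bar E$ in $\Pic(\bar X)$, so that moving the $p_i$ deforms $\phi$ in a controlled way.

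For the surjectivity statement (1), given a period homomorphism $\phi\colon L\to E$ I would reverse-engineer the blowup configuration from this formula: choose $N$ points $p_1,\dots,p_N\in\bar E$ realizing the prescribed images of a $\z$-basis of the exceptional part of $L$, and blow up $\bar X$ at these points to obtain a pair $(X',E')$ with period precisely $\phi$. To check that $(X',E')$ is deformation equivalent to $(X_0,E_0)$, I would construct an analytic family over (an open subset of) the configuration space of $N$-tuples of points on $\bar E$ by simultaneous blowup, and observe that all fibers are diffeomorphic, hence deformation equivalent.

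For statement (2), given $\theta\in\textnormal{Adm}$ satisfying (i)--(iii), condition (iii) combined with the nef-cone preservation built into $\textnormal{Adm}$ ensures that $\theta$ sends $(-1)$-curve classes to $(-1)$-curve classes and contractible configurations to contractible configurations; this is where the absence of internal $(-2)$-curves is crucial, since otherwise the $\theta$-image of an effective extremal ray could fail to be effective. Fix a sequence of disjoint $(-1)$-curves on $X_1$ realizing a birational morphism $\pi_1\colon X_1\to\bar X_1$ to a minimal model; then $\theta$ specifies a matching set of classes on $X_2$ represented by a disjoint configuration of $(-1)$-curves, inducing $\pi_2\colon X_2\to\bar X_2$. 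The isomorphism $g\colon E_2\to E_1$ extends, possibly after composing with toric automorphisms of the ambient minimal models, to $\bar f\colon\bar X_2\to\bar X_1$, and the period equality (ii) then forces $\bar f$ to carry the blowup locus of $\pi_2$ on $\bar E_2$ to that of $\pi_1$ on $\bar E_1$; lifting through the two sequences of blowups yields $f\colon X_2\to X_1$ with $f^*=\theta$. Uniqueness when $\pi_1(X_i)=0$ follows since any two such $f$ differ by an element of the identity component of $\aut(X_i,E_i)$, which is trivial away from the toric cases $\mathbb P^2$ and $\pl\times\pl$.

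The main obstacle I anticipate lies in the Torelli step, specifically in promoting the lattice condition $\theta(\Nef\ X_1)=\Nef\ X_2$ together with (iii) into the geometric matching of contractible $(-1)$-configurations, especially for interior $(-1)$-curves meeting $E$ transversely at a single point whose image on $E$ is constrained by the period. In the nodal Looijenga case this is handled by the chamber structure on $T^{\textnormal{gen}}$ from Proposition \ref{modulispaceinv} and the arithmetic of the root system; here the target $E$ is a positive-dimensional abelian variety rather than $\gm^{\dim L}$, so the analogous chamber structure on $\Hom(L,E)\setminus\bigcup_{\alpha\in\Phi}\{\phi(\alpha)=0\}$ needs to be analyzed carefully to guarantee a unique deformation-equivalent representative for each generic period, and the standard arguments from the Looijenga case do not transfer verbatim.
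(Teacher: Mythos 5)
There is nothing in the paper to compare your proposal against: Theorem \ref{torsmooth} is not proved in this paper at all. It is imported verbatim as an external citation, attributed to a forthcoming paper of Jennifer Li (with the remark that the weaker results of Section 6 of McMullen \cite{M07} would also suffice for the application), and it is stated only ``for clarity'' inside the proof of the theorem on equivariant smoothings of simple elliptic singularities. So the author offers no argument for either the surjectivity of the period map or the Torelli statement, and your attempt cannot be matched step-by-step to anything in the text.

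Judged on its own, your outline follows the standard strategy for Torelli theorems of this type (Looijenga, McMullen, Gross--Hacking--Keel, Friedman): reduce to a minimal model by contracting $(-1)$-curves, which necessarily meet $E$ transversally in one point so that the anticanonical curve survives the contraction; realize periods by choosing the blown-up points on $\bar E$; and recover the isomorphism by matching contractible $(-1)$-configurations. That is the right skeleton, but as written it is a sketch with a genuine gap exactly where you flag it: the passage from the lattice isometry $\theta\in\textnormal{Adm}$ together with hypothesis (iii) to the statement that every numerical $(-1)$-class is represented by an irreducible curve and that $\theta$ matches disjoint contractible configurations. This requires a precise definition of $\textnormal{Adm}$ (at minimum, preservation of $[E]=-K$ and of the positive cone, so that effectivity is preserved) and the key lemma that, in the absence of effective roots, the effective cone and the set of irreducible $(-1)$-curves are determined by the lattice data; without that lemma the ``matching set of classes on $X_2$ represented by a disjoint configuration of $(-1)$-curves'' is an assertion, not a deduction. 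Two smaller points also need care: in the surjectivity step, deformation equivalence of the fibers follows from connectedness of the family of simultaneous blowups, not from the fibers being diffeomorphic; and in the extension step, the period condition (ii) is already needed to extend $g:E_2\rightarrow E_1$ to an isomorphism of the minimal models (e.g.\ it must identify the two degree-$3$ polarizations in the $\mathbb P^2$ case), not only to match the blowup loci afterwards. Since the paper gives no proof, these gaps cannot be closed by appeal to it; they would have to be filled from \cite{M07} or from the cited work of Li.
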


On the other hand if $(Y_{(d)},D_{(d)})$, for $d=2,4,6$, and $(Y_i,D)$, $(Y_{ii},D)$, where $D$ has length $8$, are the Looijenga pairs corresponding to the Milnor fibres of the smoothings of the appropriate simple elliptic singularity, then we can prove what follows.

\begin{prop}\label{ellpairs}
The Looijenga pairs $(Y_{(d)},D_{(d)})$ with $d=2,4,6$, and $(Y_i,D)$ admit an antisymplectic involution which is free away from the anticanonical divisor and acts on it as a reflection, while the pair $(Y_{ii},D)$ does not admit an involution with this properties.
\end{prop}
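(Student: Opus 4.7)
The plan is to handle the two halves of the proposition separately: producing antisymplectic involutions on $(Y_{(d)}, D_{(d)})$ for $d \in \{2,4,6\}$ and on $(Y_i, D)$, and proving non-existence on $(Y_{ii}, D)$.

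For the existence direction I would follow the template set out in the proof of Proposition \ref{n10}. For each of the four surfaces I exhibit a sequence of interior and corner blowups starting from $(\pl\times\pl, \Delta)$ equipped with the involution $j_0$ of Remark \ref{baseact}, arranged so that each blowup center is a single $j_0$-orbit of size two. Remark \ref{resact} then lifts $j_0$ through every step of the tower. For $(Y_i, D)$ concretely, the toric pair $T_i$ arises from $(\pl\times\pl, \Delta)$ by four corner blowups at its four torus-fixed nodes, which $j_0$ permutes in two pairs; the final four interior blowups to reach $Y_i$ are placed on the four $(-1)$-curves of the toric boundary, again paired by $j_0$. For the remaining $(Y_{(d)}, D_{(d)})$ one writes down analogous chains: for $d=4$, eight symmetric interior blowups of $(\pl\times\pl, \Delta)$ arranged in four pairs; for $d=6$, two symmetric corner blowups followed by six symmetric interior blowups; and for $d=2$, a comparable construction based on a toric Looijenga pair with a length-$2$ cycle (obtained from an earlier tower by an additional equivariant contraction).

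For non-existence on $(Y_{ii}, D)$ I argue by contradiction: assume an involution $j$ of the stated type exists. The plan is to apply an equivariant MMP argument in the spirit of Theorem \ref{desact}, extended to the negative semidefinite setting; the proof of Theorem \ref{mmp} uses only $K_Y + D \equiv 0$ and the classification of contractions of surfaces with $A_1$ singularities, both of which remain valid when $D^2 = 0$. This forces an equivariant chain of blowdowns $(Y_{ii}, D) \to \cdots \to (\pl\times\pl, \Delta)$. The contradiction then comes from a combinatorial exhaustion in the same spirit as Proposition \ref{n12smooth}: enumerate all $j_0$-equivariant blowup sequences out of $(\pl\times\pl, \Delta)$ that produce a length-eight cycle of $(-2)$-curves, and verify that only the combinatorial type of $(Y_i, D)$ is realized, never that of $(Y_{ii}, D)$. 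The result is consistent with, and morally explained by, the distinction $\pi_1(U_i) = \zz \neq 0 = \pi_1(U_{ii})$ noted just before the proposition.

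The main obstacle is the non-existence direction. Two technical points require care: first, adapting Theorem \ref{desact} to the semidefinite case, in particular verifying that the minimal model of the quotient surface $Y_{ii}/j$ is still $(\pl\times\pl, \Delta)/j_0$ and not some other ruled surface with $A_1$ singularities; second, the combinatorial enumeration itself, which requires a careful accounting of how corner contractions and symmetric interior blowups act on the cycle of self-intersections. A cleaner alternative would be to invoke Theorem \ref{torelliinv} and classify involutions $\theta$ of $\Pic(Y_{ii})$ satisfying conditions (i)--(iii) directly; this sidesteps the semidefinite MMP but demands a detailed analysis of the Nef cone of $Y_{ii}$.
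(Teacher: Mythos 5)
Your existence argument for $(Y_{(d)},D_{(d)})$, $d=2,4,6$, and for $(Y_i,D)$ follows the same route as the paper: lift $j_0$ from $(\pl\times\pl,\Delta)$ through symmetric towers of corner and interior blowups via Remark \ref{resact}; for $d=2$ the paper likewise builds a longer equivariant pair first and then contracts an orbit of two $(-1)$-curves inside the boundary to reach a length-two cycle. That half is fine.

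The non-existence half has a genuine gap. The combinatorial exhaustion you propose --- enumerate all $j_0$-equivariant blowup sequences out of $(\pl\times\pl,\Delta)$ producing a length-eight cycle of $(-2)$-curves and check that ``only the combinatorial type of $(Y_i,D)$ is realized'' --- cannot succeed as stated, because $(Y_i,D)$ and $(Y_{ii},D)$ have the \emph{same} combinatorial type: in both pairs $D$ is a cycle of eight $(-2)$-curves. The enumeration of cycles of integers, which is what does the work in Proposition \ref{n12smooth}, sees no difference between them; the two pairs are distinguished only by finer data (which toric model they come from, equivalently $\pi_1(U_i)=\zz$ versus $\pi_1(U_{ii})=0$, or the period point). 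Since equivariant towers realizing the cycle $(2,\dots,2)$ certainly exist (they produce $(Y_i,D)$), you would have to prove that every such tower yields a pair isomorphic to $(Y_i,D)$ and never to $(Y_{ii},D)$, and your sketch gives no mechanism for doing so. There is also the unaddressed point that Theorems \ref{mmp} and \ref{desact} are proved only for negative definite pairs. The paper's actual argument is entirely different and makes essential use of the $\pi_1$ distinction you mention only as a consistency check: Proposition \ref{modulispaceinv} forces $\phi(D)=\pm 1$, hence an elliptic fibration $f:Y_{ii}\rightarrow\pl$ with $D$ a fibre of multiplicity one or two; a hypothetical involution acts on this fibration with a second fixed fibre, which must be a smooth elliptic curve acted on by a translation; passing to the quotient, resolving the four $A_1$ points, and computing $\textnormal{Tors}\,H_1$ of the complement $V$ of the new reducible fibre via the index-two chain $\widetilde D_8\subset\widetilde D_8+\z G\subset\widetilde E_8$ yields a group of order four, whereas simple connectedness of $U_{ii}$ forces $U_{ii}\rightarrow V$ to be a degree-two universal cover, so $|\pi_1(V)|=2$, a contradiction; the multiplicity-two case is excluded because the quotient would be a rational elliptic surface with two multiple fibres. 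Your closing suggestion to classify involutions of $\Pic(Y_{ii})$ via Theorem \ref{torelliinv} is closer in spirit to something workable, but it would still need lattice-theoretic input of the kind the paper extracts from the elliptic fibration.
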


\begin{proof}
To see that $(Y_i,D)$ admits an involution with the properties described in the statement of the proposition, we follow the steps of proposition \ref{n10} to lift the involution given on $(\pl\times\pl,\Delta)$ to an involution on $(T_{(d)},D_{(d)})$ for $d=4,6$ and $(T_i,G_i)$ for $d=8$. Now, since the interior blowups are performed in a symmetric way in each of these cases, this involution lifts to one on $(Y_{(d)},Y_{(d)})$ and $(Y_i,D)$ respectively, again following proposition \ref{n10}. In order to construct an involution on the Looijenga pair $(Y_{(2)},D_{(2)})$ where $D_{(2)}$ is a cycle of two smooth rational curves of self intersection $-2$ we can proceed as follows. Let $\yd$ be the Looijenga pair obtained from $(\pl\times\pl,\Delta)$ performing four interior blowups $\Delta_2,$ and $\Delta_4$ each and one interior blowup on $\Delta_1$ and $\delta_3$. Then, again following proposition \ref{n10} we can lift the involution on $(\pl\times\pl,\Delta)$ to a new involution on this $\yd$. This map is such that the two $(-1)$ curves contained in $D$ are in the same orbit, therefore we can contract them both and obtain map of Looijenga pairs $\yd\rightarrow (Y_{(2)},D_{(2)})$ and an induced involution on $(Y_{(2)},D_{(2)})$, as desired.

Now consider $(Y_{ii},D)$, suppose it admits an involution $\iota$. First note that, by the condition $\phi(i(q))=(\phi(q))^{-1}$ for all $q\in\langle D_1,\dots,D_8\rangle$ of proposition \ref{modulispaceinv}, we have $\phi(D)=\pm1$. It follows that there is an elliptic fibration $f:Y\rightarrow\pl$ such that $D$ is a fibre of $f$ of multiplicity 1 or 2 (for $\phi(D)$=1 or $-1$ respectively). First suppose that $D$ is a fibre of multiplicity 1. The involution defined on $(Y_{ii},D)$ gives an involution of the elliptic fibration, which on the base space $\pl$ is given by $z\mapsto -z$. Note that the action on $\pl$ cannot be the trivial one, since we are assuming that the involution $\iota$ has no fix points on $Y_{ii}\setminus D$. The $\zz$-action we obtain on the fibration must have two fixed fibers: by assumption one of them has to be the anticanonical divisor $D$, we claim that the other fixed fiber has to be a smooth elliptic curve. Indeed, it cannot be a node or a cusp, since an involution on these singular curves would have to fix the singularity necessarily, contradicting the assumption; on the other hand this fiber cannot be reducible, namely a union of curves of self intersection $-2$, since the only curves with self intersection equal to $-2$ are the ones contained in $D$. Therefore the second fixed fiber $F$ is a smooth elliptic curve where the involution acts as a translation. Now let us consider the quotient of $f:Y\rightarrow \pl$ by the $\zz$-action. After resolving the $A_1$ singularities, we get a new elliptic fibration $\hat f: Z\rightarrow S$, where $S$ has two points corresponding to the fixed points in $\pl$. The associated fibers are a smooth elliptic curve, which is given by the quotient of $F$ and a new reducible fibre $B$ whose dual graph looks as in figure \ref{quotcusp}: define $V:=Z\setminus B$. Consider the exact sequence in relative cohomology:

$$\cdots\rightarrow H^2(Z,V)\rightarrow H^2(Z)\rightarrow H^2(V)\rightarrow H^3(Z,V)\rightarrow \cdots$$ 

We have $H^2(Z,V)\cong H_2(B)\cong \z^9$ and $H^3(Z,V)\cong H_1(B)\cong 0$, therefore $H^2(V)\cong\Pic (Z)/\langle B_1,\dots,B_5,E_1,\dots,E_4\rangle$ and on the other hand $\textnormal{Tors} \ H_1(V)\cong \textnormal{Tors}\ H^2(V)$. Finally, if $G=-K_Z$ then the smooth fiber $F$ has multipicity 2 with $F=2G$ and, as sublattices of $\Pic(Z)$, $F\cong \widetilde D_8$ and $G^\perp\cong \widetilde E_8$, hence we get the inclusions of finite index 2: $\widetilde D_8\subset \widetilde D_8+\z G\subset \widetilde E_8$. Thus from the short exact sequence
$$0 \rightarrow G^\perp/\widetilde D_8\rightarrow \Pic Z/\widetilde D_8 \rightarrow \z$$
we obtain $\textnormal{Tors} \ H_1(V)\cong \textnormal{Tors} \ \Pic(Z)/\widetilde D_8\cong \textnormal{Tors} \ G^\perp/\widetilde D_8$ and the latter has order 4. The space $U:=Y_{ii}\setminus D$ however has trivial fundamental group, therefore the map $U\rightarrow V$ is actually the universal cover map for the space $V$. Since it is a degree two normal cover, then the index of $\pi_1(U)$ in $\pi_1(V)$ has to be equal to two, thus giving $|\pi_1(V)|=2$. This contradicts our previous conclusion that $|\textnormal{Tors} \ (H_1(V))|=4$. Finally consider the case that $D$ is a fibre of $f$ of multiplicity 2. Then, considering the quotient as above, we obtain a rational elliptic surface with two multiple fibres, which is impossible.

\end{proof}

These two propositions combined show that if a simple elliptic singularity admits an equivariant smoothing then there exists a corresponding negative semidefinite Loojienga pair equipped with an antisymplectic involution and vice versa.

\medskip
\bibliography{biblio}
\bibliographystyle{alpha}

\vspace{1cm}
\textit{Angelica Simonetti,} \textsc{Department of Pure Mathematics and Mathematical Statistics, Centre for Mathematical Sciences, University of Cambridge, Wilberforce Road, Cambridge, CB3 0WB}

\textit{Email address:} \texttt{as3220@cam.ac.uk}

\end{document}